\theoremstyle{definition}
\newtheorem{Def}{Definition}[section]
\newtheorem{Exa}[Def]{Example}
\newtheorem{Rem}[Def]{Remark}
\theoremstyle{plain}
\newtheorem{Thm}[Def]{Theorem}
\newtheorem{Lem}[Def]{Lemma}
\newtheorem{Cor}[Def]{Corollary}
\newtheorem{con}{Conjecture}
\def\IN{\mathbb N}\def\IR{\mathbb R}\def\IC{\mathbb C}
\def\A{\mathcal A}\def\B{\mathcal B}\def\E{\mathcal E}\def\D{\mathcal D}\def\K{\mathcal K}\def\cL{\mathcal L}\def\H{\mathcal H}\def\S{\mathcal S}
\def\supp{\textup{supp}}
\def\prop{\textup{Prop}}
\def\ox{\otimes}
\def\wh{\widehat}
\def\wt{\widetilde}
\def\ox{\otimes}
\def\Ga{\Gamma}
\author[J.~Deng]{Jintao Deng}
\address[J.~Deng]{Department of Mathematics, SUNY at Buffalo, NY 14260, USA.}
\email{jintaode@buffalo.edu}
\author[L.~Guo]{Liang Guo}
\address[L.~Guo]{Shanghai Institute for Mathematics and Interdisciplinary Sciences, Shanghai, 200433, P.~R.~China}
\email{liangguo@simis.cn}
\title{Twisted Roe algebras and their $K$-theory}
\date{\today}
\begin{document}

\begin{abstract}
    In this paper, we introduce a notion of twisted Roe algebra and a twisted coarse Baum-Connes conjecture with coefficients. We will study the basic properties of twisted Roe algebras, including a coarse analogue of the imprimitivity theorem for metric spaces with a structure of coarse fibrations. We show that the twisted coarse Baum-Connes conjecture with coefficients holds for a metric space with a coarse fibration structure when the base space and the fiber satisfy the twisted coarse Baum-Connes conjecture with coefficients. As an application, the coarse Baum-Connes conjecture holds for a finitely generated group which is an extension of coarsely embeddable groups. 
\end{abstract}

\maketitle

\tableofcontents

\section{Introduction}

Let $X$ be a metric space with bounded geometry. The coarse Baum-Connes conjecture claims that a certain coarse assembly map
$$
\mu: KX_* (X) \to K_* (C * (X))
$$
is an isomorphism, where $KX_*(X)$ is the coarse $K$-homology of the coarse classifying space of $X$ and $C^*(X)$ is its Roe algebra. This conjecture provides an algorithm to compute higher indices for elliptic operators on non-compact Riemannian manifolds. It has many applications in topology and geometry. In particular, it implies the Novikov conjecture on homotopy invariance of higher signatures, and the Gromov-Lawson-Rosenberg conjecture regarding the nonexistence of positive scalar curvature metrics on closed aspherical manifolds.

The coarse Baum-Connes conjecture has been verified for a large class of metric spaces with bounded geometry, including all coarsely embeddable spaces \cite{Yu2000}, and certain relative expanders \cite{DWY2023}. The purpose of the paper is to study the stability of this conjecture and enlarge the class of metric spaces satisfying the coarse Baum-Connes conjecture in the framework of twisted Roe algebras with coefficients. 

For a metric space with bounded geometry, we first introduce a coarse algebra, then use it to define a twisted version of Roe algebras. Let $Z$ be a metric space with bounded geometry and $\A$ a stable $C^*$-algebra. A coarse $Z$-algebra $\Ga(Z,\A)$ is defined to be some certain $C^*$-subalgebra of $\ell^{\infty}(Z, \A)$. Typical examples of a coarse $Z$-algebra include $\ell^{\infty}(Z,\K)$ and $C_0(Z,\K)$, where $\K$ is the algebra of compact operators on an infinite-dimensional separable Hilbert space, and $C_0(Z,\K)$ is the algebra of all functions from $Z$ to $\K$ vanishing at infinity. 

For a proper metric space $M$ with bounded geometry, we take a subset $Z\subseteq M$ which is coarse equivalent to $M$. Given a coarse algebras $\Ga(Z, \A)$, we can define the twisted Roe algebra $C^*_{\Ga(Z,\A)}(M, \A)$ for the metric space $M$. This twisted Roe algebra can be viewed as a coarse analogue of the crossed product of a group $G$ and a $G$-$C^*$-algebra. Furthermore, we have the coarse imprimitivity theorem.

\begin{Thm}[Coarse imprimitivity theorem]
Let $Z\hookrightarrow X\to Y$ be a coarse $Z$-fibration metric space such that $X$, $Y$ and $Z$ have bounded geometry. For any coarse $Z$-algebra $\Ga(F,\A)$, we have the following Morita equivalence of two $C^*$-algebras:
    $$C_{\Ga_{ind}^Z(X,\A)}^*(X,\A)\sim_{\text{Morita}} C_{\Ga(Z,\A)}^*(Z,\A),$$
where $\Ga_{ind}^Z(X, \A)$ is the induced coarse $X$-algebra associated with the coarse fibration $Z \hookrightarrow X\to Y$ and the $Z$-algebra $\Ga( Z,\A)$.
\end{Thm}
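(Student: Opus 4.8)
The plan is to prove the stated Morita equivalence by constructing an explicit equivalence (imprimitivity) bimodule $\E$, in the spirit of Green's imprimitivity theorem but carried out in the coarse category. The first step is to extract usable data from the coarse $Z$-fibration $Z\hookrightarrow X\to Y$: a controlled map $\pi\colon X\to Y$ together with, for each $x\in X$, a \emph{fibre coordinate} $\rho(x)$ lying in the fibre through $x$, which we identify with a subspace of $Z$ via the fibration; bounded geometry of $X$, $Y$ and $Z$ lets us choose $\rho$ with uniformly bounded fibres. The global nontriviality (the twisting) of the fibration is precisely what the induced coarse $X$-algebra $\Ga_{ind}^Z(X,\A)$ is designed to record, and $\rho$ is what will let us move back and forth between $X$-controlled and $Z$-controlled data.

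Using $\rho$ I would define $\E_0$ to be the $*$-algebra of finitely supported kernels $T\colon X\times Z\to\A$ (or $\K$-valued, according to the conventions fixed for the twisted Roe algebras) that are finite-propagation \emph{through} $\rho$, namely $T(x,z)=0$ unless $d(\rho(x),z)\le R$ for some $R=R(T)$, together with the local boundedness/compactness conditions built into the Roe construction and with coefficients restricted so that the outputs below lie in the relevant coarse algebras. The right action of $a\in C_{\Ga(Z,\A)}^*(Z,\A)$, viewed as a kernel $a(z,z')$, is by convolution in $Z$, $(T\cdot a)(x,z')=\sum_z T(x,z)\,a(z,z')$; the left action of $b\in C_{\Ga_{ind}^Z(X,\A)}^*(X,\A)$ is by convolution in $X$, $(b\cdot T)(x',z)=\sum_x b(x',x)\,T(x,z)$. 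The two inner products are $\langle S,T\rangle_{Z}(z,z')=\sum_x S(x,z)^*T(x,z')$, with values in a dense subalgebra of $C_{\Ga(Z,\A)}^*(Z,\A)$, and $\langle S,T\rangle_{X}(x,x')=\sum_z S(x,z)\,T(x',z)^*$, with values in a dense subalgebra of $C_{\Ga_{ind}^Z(X,\A)}^*(X,\A)$. I would then verify the equivalence-bimodule axioms — sesquilinearity and positivity of both inner products, adjointability and compatibility of each action with its inner product, and the balancing identity $\langle S,T\rangle_X\cdot R=S\cdot\langle T,R\rangle_Z$ — all of which reduce to finite, Fubini-type rearrangements of the kernels once one knows the sums are finite, which is guaranteed by finite propagation and bounded geometry. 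Finally I would complete $\E_0$ in the norms induced by the two inner products and check that both inner products are full, yielding the desired equivalence bimodule and hence the Morita equivalence.

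The step I expect to be the genuine obstacle is showing that $\langle S,T\rangle_X$ really lands in the \emph{induced} twisted Roe algebra $C_{\Ga_{ind}^Z(X,\A)}^*(X,\A)$, i.e.\ that the entrywise coefficients of the convolution operator $(x,x')\mapsto\sum_z S(x,z)\,T(x',z)^*$ lie in the induced coarse $X$-algebra $\Ga_{ind}^Z(X,\A)$ rather than in some larger coarse algebra. This is exactly the point the definition of $\Ga_{ind}^Z$ should be engineered to accommodate, so the real work is to unwind that definition and verify that kernels which are finite-propagation through $\rho$ produce coefficients of the prescribed induced form; this is also the only place where the full force of the coarse-fibration hypothesis, as opposed to a bare coarse map $X\to Y$, is used, since one must exploit the fibre structure to recognize $X$-coefficients as induced from $Z$-coefficients. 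A secondary technical point, needed because Roe algebras are operator-norm closures rather than purely algebraic objects, is to set up matching dense $*$-subalgebras on both sides so that the inner products take values there before any completion, and to check that the two norms that $\E_0$ inherits have the same completion; the symmetry of the construction under interchanging the roles of $X$ and $Z$ through $\rho$ makes this plausible, but it must be verified against the precise definitions.
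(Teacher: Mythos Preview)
Your bimodule approach is in the spirit of Green's theorem and could in principle be made to work, but the paper takes a more direct route: it realises $C^*_{\Ga(Z,\A)}(Z,\A)$ as a \emph{full corner} of $C^*_{\Ga_{ind}^Z(X,\A)}(X,\A)$. Fixing any $y\in Y$, the orthogonal projection $P_y$ of $\H_{X,\A}=\bigoplus_{y'}\H_{y',\A}$ onto the single fibre summand $\H_{y,\A}$ has propagation zero, hence is a multiplier of the twisted Roe algebra; conjugation by the coarse-equivalence unitary $U_y\colon\H_{y,\A}\to\H_{Z,\A}$ identifies the corner $P_y\,\IC_{\Ga_{ind}^Z(X,\A)}[X,\A]\,P_y$ with $\IC_{\Ga(Z,\A)}[Z,\A]$. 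The whole proof then reduces to a single fullness check: any $T$ with propagation $\le R$ and coefficients in the induced algebra has support contained in $p^{-1}(K)\times p^{-1}(K')$ for some finite $K,K'\subset Y$ (this is exactly where the $C_0$-in-$Y$ nature of $\Ga^Z_{ind}(X,\A)=\overline{\bigoplus_{y}\Ga(Z_y,\A)}$ enters), so $T$ is a finite sum of blocks $T_{y_1,y_2}=P_{y_1}TP_{y_2}$, each of which is then factored through the corner via the isometries $V_{y_i}=W_{y_i}U_{y_i}^*$. No explicit equivalence bimodule is ever built.

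Your proposal also has a concrete gap that appears \emph{before} the step you flagged. Your only propagation condition on a kernel $T\colon X\times Z\to\A$ is $d(\rho(x),z)\le R$; this constrains the $Z$-coordinate of $x$ but not its $Y$-coordinate. Consequently $\langle S,T\rangle_X(x,x')=\sum_z S(x,z)\,T(x',z)^*$ can be nonzero whenever $d_Z(\rho(x),\rho(x'))\le 2R$, and this happens for $x,x'$ lying in \emph{different fibres arbitrarily far apart in $X$} (take $x\in Z_{y_1}$, $x'\in Z_{y_2}$ with $\phi_{y_1}(x)=\phi_{y_2}(x')$ and $d_Y(y_1,y_2)$ as large as you like). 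So $\langle S,T\rangle_X$ need not have finite $X$-propagation at all, let alone coefficients in $\Ga^Z_{ind}(X,\A)$. Declaring the kernels ``finitely supported'' cures this (everything becomes a finite matrix), but then fullness of the $Z$-valued inner product fails for general $\Ga(Z,\A)$: finitely supported $\langle S,T\rangle_Z$ cannot approximate, e.g., the diagonal elements one needs when $\Ga(Z,\A)=\ell^\infty(Z,\A)$. The modification that rescues your plan is to pin the $X$-leg of the kernel to a \emph{single fibre} $Z_{y_0}$ (or to impose $C_0$-decay in $p(x)$); once you do that, your $\E$ becomes precisely $P_{y_0}\cdot C^*_{\Ga^Z_{ind}(X,\A)}(X,\A)$, and you have rediscovered the paper's corner.
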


As a typical example, we have
$C^*_{C_0(X, \K)}(X,\K)\sim_{\text{Morita}} \K$, where $C_0(X, \K)$ is the algebra of all $\K$-valued functions on $X$ vanishing at infinity. 
This can be seen as a coarse analogue of $C_0(G,\K)\rtimes_r G\sim_{\text{Morita}} \K$. We also have $C^*_{\ell^{\infty}(X, \K)}(X, \K)\cong C^*(X)$, where $C^*(X)$ is the Roe algebra of $X$.

For a metric space $X$ with bounded geometry and a coarse $X$-algebra $\Ga(X, \A)$, we introduce a coarse assembly map
$$
\mu_{\Ga(X,\A)}: \lim\limits_{d \to \infty} C^*_{L, \Ga(X,\A)}(P_d(X),\A)\to C^*_{\Ga(X,\A)}(X,\A).
$$
The \emph{twisted coarse Baum-Connes conjecture} claims that the coarse assembly map is an isomorphism. In the case when $\Ga(X,\A)=\ell^{\infty}(X,\K)$, the twisted coarse Baum-Connes conjecture is the usual coarse Baum-Connes conjecture, while it is the coarse Baum-Connes conjecture with coefficients in $\A$ when $\Ga(X, \A)=\ell^{\infty}(X, \A)$. 

In \cite{Yu2000}, Yu showed that the coarse Baum-Connes conjecture holds for a metric space which admits a coarse embedding into Hilbert space. We strengthen this result to the twisted Baum-Connes conjecture with coefficients in any coarse algebra.

\begin{Thm}
    Let $X$ be a metric space with bounded geometry which admits a coarse embedding into Hilbert space. If $\Ga(X, \A)$ is any coarse $X$-algebra with a stable fiber $\A$ over $X$, then the twisted coarse Baum-Connes conjecture with coefficients in $\Ga(X, \A)$ holds  for $X$, i.e., the twisted assembly map 
    $$
    \mu_{\Ga(X,\A)}: \lim\limits_{d \to \infty} K_*(C^*_{L,\Ga(X, \A)}(P_d(X), \A))\to K_*(C^*_{\Ga(X, \A)}(X, \A))
    $$
    is an isomorphism. 
\end{Thm}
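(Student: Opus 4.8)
The plan is to follow Yu's strategy from \cite{Yu2000} for the coarse Baum--Connes conjecture, adapting every step to the twisted setting with coefficients in the coarse algebra $\Ga(X,\A)$. The architecture of Yu's argument has three pillars: (1) a localization/Mayer--Vietoris machinery that reduces the claim for $X$ to the claim for a nested sequence of subspaces of controlled geometry (an induction on ``coarse dimension''), ultimately bottoming out at $0$-dimensional (uniformly discrete, bounded) pieces where the assembly map is visibly an isomorphism; (2) a twisted algebra $C^*_{\text{tw}}$ built from the coarse embedding $f\colon X\to H$ into Hilbert space, together with a Bott map $\beta$ and a Dirac map $\alpha$ relating the untwisted and twisted pictures; and (3) an infinite-dimensional Bott periodicity argument (Higson--Kasparov--Trout) showing $\alpha\circ\beta$ and $\beta\circ\alpha$ induce the identity on $K$-theory. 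I would reproduce this skeleton verbatim, but with $\A$-coefficients threaded through: everywhere Yu uses $\K$ as the fibre, I use $\Ga(X,\A)$, and everywhere he uses the Roe algebra $C^*(P_d(X))$ I use the twisted Roe algebra $C^*_{\Ga(X,\A)}(P_d(X),\A)$ from the paper's earlier construction.

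More concretely, the first step is to set up the \emph{twisted} localization algebras $C^*_{L,\Ga(X,\A)}(P_d(X),\A)$ and the twisted-with-Bott-coefficients algebras $C^*_{\Ga(X,\A),\text{tw}}(P_d(X),\A)$ and $C^*_{L,\Ga(X,\A),\text{tw}}(P_d(X),\A)$, formed by tensoring the matrix-coefficient data with the $\IC$-Clifford / functional-analytic model $\mathcal{S}\wox\Cl(V)$ of the finite-dimensional affine subspaces $V\subseteq H$ meeting $f(\supp)$, exactly as in Yu's Definition of the twisted Roe algebra but now internal to the coarse $X$-algebra $\Ga(X,\A)$ (the fibre $\A$ is stable, so $\A\wox\K\cong\A$ and the Clifford tensorings stay inside the allowed coefficient class). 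Then one checks the two commuting squares
\[
\begin{tikzcd}[column sep=large]
\lim_d K_*(C^*_{L,\Ga(X,\A)}(P_d(X),\A)) \arrow[r,"\mu"] \arrow[d,"\beta"'] & K_*(C^*_{\Ga(X,\A)}(X,\A)) \arrow[d,"\beta"] \\
\lim_d K_*(C^*_{L,\Ga(X,\A),\text{tw}}(P_d(X),\A)) \arrow[r,"\mu_{\text{tw}}"] & K_*(C^*_{\Ga(X,\A),\text{tw}}(X,\A))
\end{tikzcd}
\]
and the analogous one with the Dirac maps $\alpha$ going upward; commutativity is a naturality statement at the level of asymptotic morphisms and is essentially formal once the maps are defined. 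The crux is then split into: (a) the twisted assembly map $\mu_{\text{tw}}$ is an isomorphism, and (b) $\beta$ and $\alpha$ are mutually inverse on $K$-theory.

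For (a) I would run Yu's five-lemma/Mayer--Vietoris induction on the coarse-geometric dimension of $X$: the twisted Roe algebra and twisted localization algebra both decompose over a cover of $P_d(X)$ by pieces of smaller coarse dimension, these decompositions are compatible with $\mu_{\text{tw}}$, and the base case is a direct sum over points, where the twisted localization algebra and the twisted Roe algebra agree up to a contractible ideal and $\mu_{\text{tw}}$ is the identity on $K$-theory. The only thing to verify beyond Yu's treatment is that the coarse $X$-algebra structure is preserved under restriction to subspaces and under the ideal/quotient sequences of the Mayer--Vietoris argument, which follows from the definition of $\Ga(X,\A)$ as a $C^*$-subalgebra of $\ell^\infty(X,\A)$ closed under the relevant operations and the bounded geometry hypothesis (so the index set of any cover is uniformly locally finite). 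For (b), the maps $\beta$ and $\alpha$ are built fibrewise and ``coarsely uniformly'' over the Hilbert space $H$ using the Clifford algebra Bott element and the Dirac operator on $V$; since the construction is local in $H$ and natural in the coefficient algebra, the Higson--Kasparov--Trout infinite-dimensional Bott periodicity theorem applies with $\A$-coefficients and gives $\alpha\circ\beta=\mathrm{id}$, $\beta\circ\alpha=\mathrm{id}$ on $K$-theory after passing to the direct limit over finite-dimensional subspaces and over $d$. Finally, chasing the two squares: $\mu$ is an isomorphism because $\beta$ (both copies), $\mu_{\text{tw}}$, and $\alpha$ (both copies) are, and $\alpha\circ\beta=\mathrm{id}$.

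\textbf{The main obstacle} I anticipate is not the Bott periodicity input, which is robust under coefficients, but rather the bookkeeping in defining the twisted algebras \emph{inside} the coarse $X$-algebra framework: one must check that tensoring with $\mathcal{S}\wox\Cl(V)$ over the varying finite-dimensional subspaces $V$ still yields an honest coarse $X$-algebra (or at least an algebra for which the coarse imprimitivity/Roe-algebra formalism of Section~2 applies), that the support conditions in the Hilbert space interact correctly with the coarse structure pulled back along $f$, and that all of Yu's cutting-and-pasting respects the $\Ga(X,\A)$-module structure. A secondary subtlety is uniformity: Yu's estimates exploit that $f$ is a coarse embedding to get controlled-support representatives of the Bott and Dirac maps; with a general stable fibre $\A$ one needs these estimates to be uniform in the fibre, which holds because the Bott/Dirac construction only ever touches the Clifford/Hilbert-space variables and is the identity on $\A$. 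Once these compatibilities are in place, the rest is a faithful transcription of \cite{Yu2000}.
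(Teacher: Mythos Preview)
Your proposal is correct and follows essentially the same Dirac--dual-Dirac architecture as the paper: form the auxiliary coarsely proper coarse $X$-algebra $\Ga(X,\A(\H)\otimes\B)$ (your ``twisted-with-Bott-coefficients'' algebra), use the coarse Green--Julg theorem (the paper's Theorem~\ref{thm: twisted CBC}, proved by exactly the Mayer--Vietoris/dimension-induction you describe in part~(a)) to see that the middle assembly map is an isomorphism, and conclude by the commuting $\beta$/$\alpha$ squares. One small caution: you assert both $\alpha\circ\beta=\mathrm{id}$ and $\beta\circ\alpha=\mathrm{id}$, but Yu's argument---and the paper's---only establishes that $\alpha\circ\beta$ is the identity on each column, which is all the diagram chase needs; claiming $\beta\circ\alpha=\mathrm{id}$ on the Roe-algebra side directly is stronger than what is actually proved there.
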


Let $Z \hookrightarrow X\to Y$ be a coarse fibration. It is possible that $X$ does not admit a coarse embedding into Hilbert space, even though $Z$ and $Y$ are coarsely embeddable into Hilbert space. The following is the main result of this paper. 

\begin{Thm}
Let $Z\hookrightarrow X\xrightarrow{p} Y$ be a coarse fibration with bounded geometry. If both $Z$ and $Y$ satisfy the twisted coarse Baum-Connes conjecture with any coefficients, then the twisted coarse Baum-Connes conjecture with coefficients holds for $X$.
\end{Thm}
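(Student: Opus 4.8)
The plan is to prove the final theorem by combining the coarse imprimitivity theorem with the twisted coarse Baum-Connes conjecture applied to the base and the fiber, following the strategy that has become standard for fibration-type results in the Baum-Connes landscape (e.g. the Oyono-Oyono permanence arguments for group extensions). The key observation is that the total space $X$ of the coarse fibration $Z\hookrightarrow X\xrightarrow{p} Y$ carries, for each coarse $X$-algebra $\Ga(X,\A)$, an induced structure relating it to a twisted Roe algebra "over $Y$ with fiber a twisted Roe algebra over $Z$". So the first step is to set up, given an arbitrary coarse $X$-algebra $\Ga(X,\A)$, a coarse $Y$-algebra whose fiber at $y\in Y$ is (Morita equivalent to) the twisted Roe algebra $C^*_{\Ga(p^{-1}(y),\A)}(p^{-1}(y),\A)$ built from the restriction of $\Ga(X,\A)$ to the fiber $p^{-1}(y)$; call it $\wt\Ga(Y)$. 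The coarse imprimitivity theorem should then give a Morita equivalence
$$C^*_{\Ga_{ind}^Z(X,\A)}(X,\A)\sim_{\text{Morita}} C^*_{\wt\Ga(Y)}(Y),$$
and a parallel statement at the level of localization algebras $C^*_{L,\Ga(X,\A)}(P_d(X),\A)$, where the Rips complex $P_d(X)$ fibers over $P_d(Y)$ with fibers coarsely the $P_d(Z)$'s. This reduces the assembly map for $(X,\Ga(X,\A))$ to the assembly map for $(Y,\wt\Ga(Y))$.

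**Next I would** invoke the hypothesis that $Y$ satisfies the twisted coarse Baum-Connes conjecture with arbitrary coefficients: applied to the coefficient algebra $\wt\Ga(Y)$, this shows that
$$\mu_{\wt\Ga(Y)}:\lim_{d\to\infty}K_*(C^*_{L,\wt\Ga(Y)}(P_d(Y)))\to K_*(C^*_{\wt\Ga(Y)}(Y))$$
is an isomorphism. To finish, I need to identify the left-hand side (the localized algebra over $Y$) with $\lim_d K_*(C^*_{L,\Ga(X,\A)}(P_d(X),\A))$. This is where the hypothesis on the fiber $Z$ enters: the localization algebra over $P_d(Y)$ with coefficients in $\wt\Ga(Y)$ should, by a cutting-and-pasting / Mayer-Vietoris argument along a cover of $P_d(Y)$ by controlled sets, be assembled from the fiberwise data, and on each piece the relevant $K$-theory is governed by the twisted coarse Baum-Connes conjecture for the fiber copies of $Z$ with coefficients in the restricted algebras $\Ga(p^{-1}(y),\A)$. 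Concretely, I expect to prove a "fiberwise" statement: the map $\lim_d K_*(C^*_{L,\Ga(X,\A)}(P_d(X),\A))\to \lim_d K_*(C^*_{L,\wt\Ga(Y)}(P_d(Y)))$ induced by $p$ is an isomorphism, using that $Z$ satisfies twisted coarse Baum-Connes with coefficients to identify the "vertical" localization algebra with the "vertical" Roe algebra at the level of the fibration.

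**The main obstacle** will be making the fiberwise/cut-and-paste argument rigorous in the coarse geometric setting: one must produce a suitable ideal filtration or Mayer-Vietoris decomposition of the twisted Roe algebra $C^*_{\wt\Ga(Y)}(Y)$ (and its localized counterpart) along the fibration so that the associated graded / subquotients are exactly twisted Roe algebras of the fibers $p^{-1}(y)$, and then run a five-lemma / direct-limit argument over $d$ to propagate the isomorphism. The bounded geometry hypothesis on all three spaces is what lets one control the multiplicity of such covers and keep the relevant algebras of bounded-propagation operators well-behaved; uniformity of the fibration (encoded in the coarse fibration structure) is what ensures the fiberwise twisted coarse Baum-Connes isomorphisms can be taken "with uniform coefficients" rather than one fiber at a time. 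Once the two Morita equivalences (total space $\leftrightarrow$ base with induced coefficients) are in place on both the Roe-algebra side and the localization side, compatibly with the respective assembly maps, the theorem follows by a diagram chase: $\mu_{\Ga(X,\A)}$ is conjugate to $\mu_{\wt\Ga(Y)}$, which is an isomorphism by the hypothesis on $Y$, once the identification of the $Y$-localization side uses the hypothesis on $Z$.
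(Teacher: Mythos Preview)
Your high-level strategy---reduce the assembly map for $X$ to an assembly map for $Y$ with coefficients built from fiberwise twisted Roe algebras, then use the hypothesis on $Z$ to identify the localization side---is exactly the Oyono-Oyono-style route the paper takes. However, there is a concrete misstep in how you invoke the coarse imprimitivity theorem, and the paper's actual implementation differs substantially from what you outline.

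The coarse imprimitivity theorem in the paper says that for a coarse $Z$-algebra $\Ga(Z,\A)$ one has
\[
C^*_{\Ga_{ind}^Z(X,\A)}(X,\A)\ \sim_{\text{Morita}}\ C^*_{\Ga(Z,\A)}(Z,\A),
\]
i.e.\ the \emph{induced} twisted Roe algebra over $X$ is Morita equivalent to a twisted Roe algebra over the \emph{fiber} $Z$, not over the base $Y$. It does not produce, for an \emph{arbitrary} coarse $X$-algebra $\Ga(X,\A)$, a coarse $Y$-algebra $\wt\Ga(Y)$ with $C^*_{\Ga(X,\A)}(X,\A)\sim_{\text{Morita}} C^*_{\wt\Ga(Y)}(Y)$. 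A generic $\Ga(X,\A)$ is not of induced form, so the theorem as stated gives you nothing here. Relatedly, $P_d(X)$ does not fiber over $P_d(Y)$ in any usable way, so the ``parallel statement at the level of localization algebras'' does not come for free.

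The paper circumvents both issues by a genuine detour: it first replaces the problem for $(X,\Ga(X,\A))$ by the twisted coarse Baum--Connes conjecture for the \emph{product} $X\times Y$ with a specially constructed coarse algebra $\B$ supported near the graph of $p$ (Step~1). It then introduces a \emph{partial localization algebra} $p_Y\text{-}C^*_{L,\B}(X\times P_{d'}(Y),\A)$, localized only in the $Y$-direction, and shows that the evaluation map from it to the twisted Roe algebra is (up to Morita equivalence) precisely a twisted assembly map for $Y$ with coefficients in a coarse $Y$-algebra whose fiber is $C^*_{\B_0}(X,\A)\otimes\K$ (Step~2); this is where the hypothesis on $Y$ is applied. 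The remaining map $i_Y$ from the full localization algebra to the partial one is handled by cutting-and-pasting in the $Y$-direction down to the $0$-skeleton (Step~3), at which point it becomes an assembly-type map for a separated disjoint union of copies of $Z$, i.e.\ a twisted assembly map for $Z$ with a specific coarse $Z$-algebra $\E$ (Step~4); this is where the hypothesis on $Z$ is applied.

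So the missing ingredients in your plan are: (i) the passage to $X\times Y$ with the algebra $\B$, which is what lets one treat an arbitrary $\Ga(X,\A)$ rather than only induced ones; and (ii) the partial localization algebra, which is the device that separates the ``base'' and ``fiber'' contributions to the assembly map and makes the two hypotheses enter at distinct, well-defined points. Your sketch correctly anticipates the Mayer--Vietoris/cut-and-paste step and the need for uniformity across fibers, but as written it rests on a Morita equivalence that the imprimitivity theorem does not supply.
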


We should mention that we take a different approach to prove the twisted coarse Baum-Connes conjecture for the coarse fibration using a coarse analogue of the imprimitivity theorem, other than the Dirac-dual-Dirac method. 

Group extensions provide examples of coarse fibrations. In \cite{AT2019}, Arzhantseva and Tessera showed that admitting a coarse embedding into Hilbert space is not preserved by group extensions. They constructed in \cite{AT2019} an extension of groups $1\to N \to G\to G/N\to 1$ such that $N$ and $G/N$ are coarsely embeddable into Hilbert space, but $G$ is not coarsely embeddable. It is natural to ask if the coarse Baum-Connes conjecture holds for such a group $G$. In \cite{Deng2022}, the first-named author showed that the Novikov conjecture with coefficients holds for $G$, and it follows that the coarse assembly map is injective for $G$. In this paper, we strengthen this result to the following.

\begin{Thm}
Let $1 \to N \to G \to Q \to 1$ be an extension of finitely generated groups. If $N$ and $Q$ are coarsely embeddable into Hilbert spaces, then the coarse Baum-Connes conjecture holds for $G$. 
\end{Thm}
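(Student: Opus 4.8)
The plan is to realize the group extension as a coarse fibration and then feed it into the coarse imprimitivity machinery, i.e.\ into the main theorem together with the twisted strengthening of Yu's theorem. First I would set up the coarse fibration carefully. Choose finite generating sets compatibly: take a finite generating set $S$ of $G$, so that $\pi(S)\setminus\{e\}$ generates $Q$, where $\pi\colon G\to Q$ is the quotient homomorphism. With the corresponding word metrics, $\pi$ is a surjective Lipschitz map (hence bornologous and coarsely surjective), and the metric induced on $Q$ through $\pi$ coincides with its word metric. Since $N$, $G$ and $Q$ are finitely generated, all three have bounded geometry. Picking a set-theoretic section $s\colon Q\to G$ of $\pi$ gives a bijection $Q\times N\to G$, $(q,n)\mapsto s(q)n$, and the left $G$-action permutes the fibers $\pi^{-1}(q)=s(q)N$ by isometries, so all fibers are isometric to $N$ equipped with the metric $d_G$ restricted from $G$. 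This homogeneity is exactly what supplies the uniform local-triviality estimates, so $N\hookrightarrow G\xrightarrow{\pi}Q$ is a coarse $N$-fibration with bounded geometry in the sense used above.

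Next I would identify the coarse type of the fiber. The fiber carries the metric $d_G$ restricted to $N$, and since $N$ is finitely generated this is a proper left-invariant metric on $N$, hence coarsely equivalent to the word metric on $N$: indeed $d_G\le d_N$ gives one direction, while $N\cap B_G(e,R)$ is finite for every $R$, which makes the identity map bornologous and proper in the other direction as well (note this does \emph{not} assert a quasi-isometry, as the kernel may be distorted). Thus the fiber is coarsely equivalent to $N$ with its word metric, which by hypothesis is coarsely embeddable into Hilbert space; similarly $Q$ is coarsely embeddable. By the twisted strengthening of Yu's theorem stated above, both $N$ and $Q$ then satisfy the twisted coarse Baum--Connes conjecture with coefficients in every coarse algebra with stable fiber. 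Applying the main theorem to the coarse fibration $N\hookrightarrow G\xrightarrow{\pi}Q$ shows that $G$ satisfies the twisted coarse Baum--Connes conjecture with coefficients. Finally, specializing the coefficient coarse algebra to $\Gamma(G,\K)=\ell^{\infty}(G,\K)$, the twisted conjecture reduces to the ordinary coarse Baum--Connes conjecture, which therefore holds for $G$.

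The bulk of the genuine difficulty has already been absorbed into the main theorem and the twisted version of Yu's theorem, so the remaining work is largely bookkeeping. The step requiring the most care is verifying that an extension of finitely generated groups literally meets the axioms of a coarse fibration with bounded geometry --- in particular the uniform compatibility conditions relating the fibers $s(q)N$ to $N$ --- and, slightly more delicately, handling a distorted kernel, where the metric the fibers inherit from $G$ need not be quasi-isometric to the intrinsic word metric on $N$; the observation that the two are nevertheless coarsely equivalent is what keeps the argument running. If one does not wish to assume $N$ finitely generated, one simply reads ``$N$ coarsely embeddable'' as referring to $N$ with the metric induced from $G$, and the same argument applies without change.
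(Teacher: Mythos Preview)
Your proposal is correct and follows essentially the same route as the paper: realize the extension as a coarse $N$-fibration (the paper's Example~\ref{exa: coarse fibration structure}), invoke the twisted version of Yu's theorem for $N$ and $Q$, apply the main theorem, and specialize the coefficients to $\ell^{\infty}(G,\K)$. The paper records this as a direct corollary with no additional argument; you have simply filled in details the paper leaves implicit, notably the verification of the coarse-fibration axioms and the observation that the induced metric on $N$ from $G$ is coarsely equivalent to its own word metric (a point the paper sidesteps by equipping $N$ with the induced metric from the outset).
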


The paper is organized as follows. In Section 2, we introduce the concept of twisted Roe algebras and study their properties, especially the coarse imprimitivity theorem. In Section 3, we introduce the twisted coarse Baum-Connes conjecture and show how our framework of twisted coarse Baum-Connes can be used to prove the usual coarse Baum-Connes conjecture. In particular, we formulate a coarse algebra for a metric space admitting a coarse embedding into Hilbert space. In Section 4, we show that the coarse Baum-Connes conjecture for a metric space $X$ with a coarse fibration structure $Z\hookrightarrow X\to Y$ holds when the fiber $Z$ and the base space $Y$ are coarse embeddable into Hilbert space.

\subsection*{Acknowledgement}
The second-named author wishes to thank Texas A\&M University and SUNY at Buffalo for hospitality during a visit in the spring of 2024.

\section{Twisted Roe algebra with coefficients}
In this section, we will first introduce a coarse algebra for a metric space and use it to define twisted Roe algebra. We will then consider the properties of the twisted Roe algebras. 

\subsection{Coarse algebras}
Let $X$ be a metric space with \emph{bounded geometry} in the sense that for any $R>0$, there exists $N_R\in\IN$ such that
$$\sup_{x\in X}\#B(x,R)\leq N_R.$$
A \emph{partial translation} on $X$ is a bijection $v: D\to R$ from a subset $D\subseteq X$ onto another subset $R \subseteq X$ such that $\sup_{x\in D}d(x,v(x))\leq R$ for some constant $R>0$. Recall that a $C^*$-algebra $\A$ is stable if $\A\otimes \mathcal{K}\cong \A$, where $\mathcal{K}$ is the algebra of compact operators on an infinite-dimensional separable Hilbert space. 

Let $\ell^{\infty}(X,\A)$ be the $C^*$-algebra of all bounded map from $X$ to $\A$. For any map $\xi\in \ell^{\infty}(X, \A)$ and any partial translation $v: D \to R$ on $X$, we define a map $v^*\xi$ from $X$ to $\A$ by
\[
v^*\xi(x) =
\begin{cases}
\xi(v(x)) & \text{~if~} x \in D\\
0 & \text{~if~} x \notin D.
\end{cases}
\]
In the case when $R\cap {\rm Supp}(f)=\emptyset$, we set $v^*f\equiv 0$ on $X$.

\begin{Def} Let $X$ be a metric space with bounded geometry, and $\A$ a stable $C^*$-algebra.
An \emph{algebraic coarse $X$-algebra} is $*$-subalgebra $\Theta(X,\A)$ of $\ell^{\infty}(X,\A)$ satisfying that 
\begin{itemize}
\item[(1)] for any $x\in X$, the set $\{\xi(x)\mid \xi\in\Theta(X,\A)\}$ is dense in $\A$;
\item[(2)] for any partial translation $v: D \to R$ on $X$, and $\xi\in\Theta(X,\A)\subseteq\ell^{\infty}(X,\A)$, 
 $v^*\xi$ is still  an element in $\Theta(X,\A)$.
\end{itemize}
The completion, denoted by $\Ga(X,\A)$, of $\Theta(X,\A)$ in $\ell^{\infty}(X,\A)$ is called a \emph{coarse $X$-algebra}.
\end{Def}

\begin{Exa}
Let $X$ be a metric space with bounded geometry and $\mathcal{A}$ a stable $C^*$-algebra. 
\begin{enumerate}
    \item It is obvious that $\ell^{\infty}(X,\A)$ is a coarse $X$-algebra for any stable algebra $\A$.

\item Let $C_0(X, \mathcal{A})$ be the $C^*$-algebra of all maps from $X$ to $\mathcal{A}$ vanishing at infinity. Then it is obvious that $C_0(X, \mathcal{A})$ is a coarse $X$-algebra. Moreover, every coarse $X$-algebra with fiber $\A$ contains $C_0(X, \A)$ as an ideal.
\end{enumerate}

\end{Exa}
Let $M$ and $X$ be metric spaces. We say that $M$ and $X$ are \emph{coarsely equivalent} if there exists a proper map $f: M\to X$ and non-decreasing functions $\rho_+,\rho_-:\IR_+\to\IR_+$ with $\rho_{\pm}(t)\to\infty$ as $t$ tends to $\infty$, such that
\begin{itemize}
\item[(1)] $\rho_-(d_{M}(x,x'))\leq d_{X}(f(x),f(x'))\leq \rho_+(d_{M}(x,x'))$ for all $x, x'\in M$;
\item[(2)] there exists $r>0$ such that $Z=N_r(f(M))$, where $N_r(f(M))=\{x\in X: \exists~ m \in M,~\text{such~that}~d(x,f(m))\leq r\}$.
\end{itemize}
The map $f$ is called a \emph{coarse equivalence} and $\rho_{\pm}$ are called the \emph{controlling functions} associated with $f$. We say that a family of metric spaces $(M_n)$ is \emph{uniformly coarsely equivalent} to $X$ if there exists a coarse equivalence $f_n: M_n\to X$ for each $n$ such that the family of coarse equivalences $(f_n)_{n\in \mathbb{N}}$ shares the same controlling functions $\rho_{\pm}$ and the same constant $r$ such that $N_r(f_n(M_n))=X$ for all $n \in \mathbb{N}$.

\subsection{Roe algebras and twisted Roe algebras}
Let $M$ be a proper metric space in the sense that the closure of a bounded subset of $M$ is compact. In order to define the twisted Roe algebra for the space $M$, we fix a countable, dense subset $\widehat{M}\subseteq M$. 
\begin{Def}
    Let $T: \widehat{M}\times \widehat{M} \to \A$ be a uniformly bounded map. 
\begin{enumerate}[(1)]
    \item The map $T$ is said to have \emph{finite propagation} if there exists $r>0$ such that $T(x,y)=0$ whenever $d(x,y)>r$ for all $x,y\in \widehat{M}$.
    \item The map $T: \widehat{M}\times \widehat{M} \to \A$ is said to be \emph{locally compact} if the set 
$$
\left\{(x,y)\in (B \times B)\cap \widehat{M}\times \widehat{M}: T_{x,y}\neq 0
\right\}
$$
is finite for any compact subset $B \subseteq M$.
\end{enumerate}
\end{Def}

Consider
\[\H_{\widehat{M},\A}=\ell^2(\widehat{M},\A)=\left\{\xi: \widehat{M}\to \A\ \Big|\ \xi_z\in\A,\ \sum_{z\in \widehat{M}}\xi^*_z\xi_z\text{ converges in norm} \right\}.\]
The space $\H_{X,\A}$ is a right Hilbert $\A$-module as follows. For any $\xi, \eta \in \ell^2(\widehat{M}, \A)$ and $a\in \A$, we define 
$$
\langle \xi, \eta \rangle=\sum_{x}\xi^*_x \cdot \eta_x,
~~~~\text{and}~~~~
\left(\xi\cdot a \right)_x=\xi_x\cdot a.
$$
It is called a \emph{geometric $M$-$\A$-module}. 
We say that a map $T: \widehat{M}\times \widehat{M} \to \A$ is \emph{norm-bounded} if it is bounded when it is viewed as the $\A$-linear map on $\ell^2(\widehat{M},\A)$ by
$$
(T\xi)(x)=\sum_{z \in \widehat{M}}T(x,z)\xi_z
$$
for all $\xi \in \ell^2(\widehat{M}, \A)$.

\begin{Def}
Let $M$ be a proper metric space with bounded geometry, $\widehat{M}\subseteq M$ a countable dense subset, and $\A$ a stable $C^*$-algebra. The Roe algebra $C^*(M,\A)$ of $M$ with coefficients in $\A$ is the $C^*$-subalgebra generated by all locally compact operators on $\ell^2(\widehat{M}, \A)$ with finite propagation.    
\end{Def}

Now, we are ready to impose the twisting structure on Roe algebras.
Let $Z\subseteq M$ be a subspace with bounded geometry satisfying the following
\begin{enumerate}
    \item the inclusion $Z \hookrightarrow M$ is a coarse equivalence;
    \item there exists a uniformly bounded, disjoint Borel cover $\left\{U_z\right\}_{z \in Z}$ of $M$ such that each $U_x$ has non-empty interior and $z\in U_z$ for each $z\in Z$.
\end{enumerate} 
For each $z\in Z$, we consider $B_x=U_z\cap \widehat{M}$. Then the collection $\{B_z\}_{z\in Z}$ is uniformly bounded and $B_z\cap B_{z'}=\emptyset$ for any $z\neq z'$. 
By the stability of the algebra $\A$, we consider a $*$-isomorphism 
$$\psi: \A\otimes \K(\ell^2(\widehat{M}))\to \A.$$
If $T: \widehat{M}\times \widehat{M} \to \A$ is norm-bounded, locally compact operator with finite propagation, then for any $x,y \in Z$, the operator $\chi_{B_x}T\chi_{B_y}$ lies in the algebra $\A\otimes \K(\ell^2(\widehat{M}))$. We obtain a map $\widetilde{T}: Z\times Z \to \A$ by 
$$
\widetilde{T}(x,y)=\psi\left(\chi_{B_x}\cdot \widetilde{T}\cdot \chi_{B_y}\right)
$$
for all $x,y\in Z$. Since each $B_x$ is bounded and $T$ is locally compact, the map $\widetilde{T}$ is well-defined.

Let $v:D \to R$ be a partial translation on $Z$ and let $T: \widehat{M}\times \widehat{M}\to \A$ be a norm-bounded, locally compact map with finite propagation. Then, we can consider a map $T^v:Z\to \A$ by 
\[
T^v(x) =
\begin{cases}
\widetilde{T}(x,v(x)) & \text{~if~} x \in D\\
0 & \text{~if~} x \notin D
\end{cases}
\]

\begin{Def}[Algebraic twisted Roe algebra]\label{def: twisted Roe algebra}
With notations as above, let $\Gamma(Z,\A)$ be a coarse $Z$-algebra with a stable fiber $\A$. The \emph{algebraic twisted Roe algebra of $M$ with twisted coefficients in $\Gamma(Z,\A)$}, denoted by $\IC_{\Gamma(Z,\A)}[M,\A]$, is defined to be the $*$-subalgebra of $B(\ell^2(\widehat{M}, \A))$ all norm-bounded map $T: \widehat{M} \times \widehat{M} \to \A$ satisfying
\begin{itemize}
    \item[(1)] $T$ is locally compact with finite propagation;
    \item[(2)] for any partial translation $v:D \to R$, $T^v\in \Gamma(Z, \A)$.
\end{itemize}
The twisted Roe algebra $C_{\Ga(Z,\A)}^*(M,\A)$ is defined to be the completion of $\IC_{\Ga(Z,\A)}[M,\A]$ under the norm given by the representation on $\H_{\widehat{M},\A}$.
\end{Def}
We would like to point out that the definition of the twisted Roe algebras of $M$ with coefficients in $\Gamma(Z, \mathcal{A})$ does not depend on the choice of dense subset $\widehat{M}$. 

\begin{Exa}
Let $M$ be a metric space with bounded geometry, and $\A$ a stable $C^*$-algebra. One can take the coarse equivalent subset $Z$ to be the whole space $M$. Consider the coarse $Z$-algebra $\Gamma(Z,\A)=\ell^{\infty}(Z,\A)$. 
Then from Definition \ref{def: twisted Roe algebra}, it gives rise to the Roe algebra with coefficients in $\A$ \emph{without twist}. In particular, when $\A=\K$, the twisted Roe algebra is the \emph{Roe algebra} $C^*(M)$. 
\end{Exa}

The following example is a coarse analogue of the imprimitivity theorem in the sense that for a locally compact group $G$, the crossed product $C_0(G)\rtimes G$ is Morita equivalent to the algebra of compact operators.

\begin{Exa}\label{exa: coefficient in c0(X)}
Let $Z$ be a metric space with bounded geometry, and $\K$ the algebra of all compact operators. Consider the coarse $Z$-algebra $\Gamma(Z,\K)=C_0(Z,\K)\subseteq \ell^{\infty}(Z,\K)$ over $Z$ with fiber $\K$. In fact, for any partial translation $v: D \to R$ and $f\in C_0(Z,\K)$, $v^*f$ is still a $C_0$-function. We take $\wh{Z}=Z$ since $Z$ itself is countable. We have that $\H_{Z,\A}=\ell^2(Z,\K)$. 

For any fixed $R>0$, let $\Delta_R=\left\{(x,y):d(x,y)\leq R\right\}$.
Since $Z$ has bounded geometry, for any $R$, we have a decomposition 
of $\Delta_R=\bigsqcup_i^N E^i$ for some $N\in \mathbb{N}$, such that
\begin{enumerate}[(1)]
    \item for each $1\leq i\leq N$, $x\in Z$, there exists at most one $y\in Z$ such that $(x,y)\in E^i$;
    \item for each $1\leq i\leq N$, $y\in Z$, there exists at most one $x\in Z$ such that $(x,y)\in E_i$.
\end{enumerate}

For each $E_i$, we define 
$$D_i=\{x:~\mbox{there~exists}~y\in Z \mbox{~such~that~}(x,y)\in E_i\}$$
and 
$$R_i=\{y:~\mbox{there~exists}~x\in Z \mbox{~such~that~}(x,y)\in E_i\}.$$ 
A partial isometry $v_i: D_i\to R_i$ can be defined by $v^i(x)=y$ if $(x,y)\in E_i$, for all $x\in D_i$.

For any operator $T\in C^*_{C_0(Z,\K)}(Z, \K)$ with propagation less than $R$, we obtain a  decomposition $T=\sum_iT_i$, where
\[
T_i(x, y) =
\begin{cases} 
T(x,y) & \text{if } (x,y)\in E_i \\
0 &  \text{~otherwise}.
\end{cases}
\]
Moreover, we can define $\xi_i(x)=T_i(x,y)$ for all $x\in D_i$ and zero otherwise. Then, we have that $T_i=\xi_iv_i$. By definition, each element $\xi_i\in C_0(Z,\K)$, and we can view it as a compact operator over the $\K$-module $\ell^2(Z, \K)$. This shows that every element in $C^*_{C_0(Z,\K)}(Z, \K)$ with finite propagation is actually compact. On the other hand, $\K(\ell^2(Z))\otimes \K$ is a subalgebra of $C^*_{C_0(Z,\K)}(Z, \K)$. As a result, the canonical inclusion from $\K(\ell^2(Z))\otimes \K$ to $C^*_{C_0(Z,\K)}(Z, \K)$ is an isomorphism, i.e.,
$$C^*_{C_0(Z, \K)}(Z,\K)\cong \K(\ell^2(Z))\otimes \K.$$

\end{Exa}

Example \ref{exa: coefficient in c0(X)} can be viewed as a coarse analogue of the identity $C_0(\Ga,\K)\rtimes \Ga\cong\K(\ell^2(\Ga))\ox\K\cong\K$ for a countable discrete group $\Ga$.

\begin{Exa}\label{exa: uniform algebra}
Let $Z$ be a metric space with bounded geometry. Take $\A=\K$ and $\ell^{\infty}_{\rm ur}(Z,\K)$ be the $C^*$-subalgebra of $\ell^{\infty}(Z,\K)$ generated by all functions $\xi: Z\to\K$ such that each $\xi(x)$ is of finite rank and
$$\sup_{x\in X}\text{rank}(\xi(x))<\infty.$$
One can check that $\ell^{\infty}_{\rm ur}(Z,\K)$ is indeed a coarse $Z$-algebra. In this case, $\IC_{\ell^{\infty}_{\rm ur}(Z,\K)}[Z,\K]$ is a subalgebra of $\IC[Z]$ consists of all $T\in\IC[Z]$ such that $T$ can be approximated by a \emph{uniformly finite rank} operator $S$ in norm, i.e.,
$$\sup_{x,y\in X}{\rm rank}(S(x,y))<\infty.$$
Then $C^*_{\ell^{\infty}_{\rm ur}(Z,\K)}(Z,\K)$ is isomorphic to the \emph{uniform algebra} $UC^*(Z)$ (cf. \cite[Definition 4.1]{SpaWil2013}).

\end{Exa}

\subsection{Coarse imprimitivity}
The following definition is introduced by the second-named author in \cite{GLWZ2023JTA}.

\begin{Def}[Coarse fibration structure]\label{def: coarse fibration structure}
Fix a metric space $Z$. A metric space $X$ is said to have a \emph{coarse $Z$-fibration} over the base space $Y$ if there exists  a surjective map $p: X\to Y$ satisfying the following conditions:\begin{itemize}
\item[(1)] the map $p$ is \emph{bornologous} (or \emph{uniformly expansive}), i.e., for any $R>0$, there exists $S>0$ such that $d_Y(p(x),p(x'))\leq S$ whenever $d_X(x,x')\leq R$;
\item[(2)] the family of fiber spaces $\{Z_y\}$ is uniformly, bijectively coarsely equivalent to $Z$, where $Z_y=p^{-1}(y)\subseteq X$ is the fiber and $\{\phi_y:Z_y\to Z\}_{y\in Y}$ is the family of bijective coarse equivalences;
\item[(3)] for any $R>0$, and each $y \in Y$, there exists a coarse equivalence $\phi_{y,R}:p^{-1}(B_Y(y,R))\to Z$, such that the collection of maps $\{\phi_{y,R}:p^{-1}(B_Y(y,R))\to Z\}$ is a family of uniformly coarse equivalences.
\end{itemize}
Such a space is denoted by $Z\hookrightarrow X\xrightarrow{p}Y$.
\end{Def}

\begin{Exa}
If $X$ and $Y$ are metric spaces, then the product space $X\times Y$ has both a coarse $X$-fibration structure and a coarse $Y$-fibration structure. One can denote these two structures respectively by
$$X\hookrightarrow X\times Y\xrightarrow{p_Y} Y\quad\text{and}\quad Y\hookrightarrow X\times Y\xrightarrow{p_X} X.$$
\end{Exa}

\begin{Exa}\label{exa: coarse fibration structure}
Let $1\to N\to G\to Q\to 1$ be an extension of countable discrete groups. The group $G$ is equipped with a left-invariant proper metric. The group $N$ is equipped with the induced metric of $G$, and $Q$ is equipped with the quotient metric along the quotient map $G \to Q$. One can check that $G$ has a coarse $N$-fibration structure with base space $Q$.
\end{Exa}
Let $X$, $Y$, and $Z$ be metric spaces with bounded geometry such that $Z\hookrightarrow X\to Y$ is a coarse $Z$-fibration. By Definition \ref{def: coarse fibration structure}, there exists a bijective, coarse equivalence $\phi_y: Z_y\to Z$ for each $y\in Y$. Let $\Gamma(Z,\A)$ be a coarse $Z$-algebra. We will use this algebra to define a induced coarse $X$-algebra with fiber $\A$, which can be seen as a coarse analogue of the induced representations for groups, see \cite{Green1978}. For each $y\in Y$, we define
\begin{equation}\label{eq: pullback coefficient}
\Gamma(Z_y,\A)=\left\{
\xi\circ \phi_y: Z_y\to \A\ \Big|\  \xi\in\Ga(Z,\A)
\right\},
\end{equation}
Since each $\phi_y$ is a bijective coarse equivalence, $\Gamma(Z_y,\A)$ is a coarse $Z_y$-algebra. 
Now, we define the \emph{induced coarse $X$-algebra}, denoted by $\Gamma_{ind}^Z(X,\A)$, associated with $\Gamma(Z,\A)$ to be the completion of the following algebra
$$\Theta_{ind}^Z(X,\A)=\bigoplus_{y\in Y}\Gamma(Z_y,\A).$$
Since $X=\bigsqcup_{y\in Y}Z_y$ as a set, we can view every element in $\Theta_{ind}^Z(X,\A)$ as an $\ell^{\infty}$-function from $X$ to $\A$. 

We now prove that $\Gamma_{ind}^{Z}(X,\A)$ is a coarse $X$-algebra. For any partial translation $v:D\to R$ on $X$ and any element $\xi\in\Gamma_{ind}^Z(X,\A)$ with $\supp(\xi)\subseteq D$, by definition, one has that
$$(v^*(\xi))(x)=\xi(v(x)).$$
Since $v$ is a partial translation, there exists $R>0$ such that $d(x,v(x))\leq R$ for any $x\in D$. As $p:X\to Y$ is bornologous, there exists $S$ such that $d_Y(p(x),p(v(x)))\leq S$. Thus if we view $\xi$ as a $C_0$-function from $Y$ to $\bigsqcup_{y\in Y}\Theta(Z_y,\A)$, we define the $\varepsilon$-support of $\xi$ to be
$$\supp_{\varepsilon}(\xi)=\{y\in Y\mid \|\xi_{Z_y}\|_{\infty}\geq\varepsilon\}.$$ 
Then the $\varepsilon$-support of $v_*(\xi)$ is totally contained in $B_Y(\supp_{\varepsilon}(\xi),S)$, as a result, $\|v_*(\xi)|_{Z_y}\|_{\infty}\to 0$ as $y\to\infty$.

On the other hand, for such a partial translation $v: D\to R$, denote $R_y=Z_y\cap R$ for each $y\in Y$. We define
$$D_{y'y}=\{v^{-1}(x)\mid x\in R_y\}\cap Z_{y'}.$$
Let $v_{y'y}$ be the restriction of $v$ on $D_{y'y}$. For each $v_{y'y}$ and $\xi\in\Theta_{ind}^Z(X,\A)$, one can check that $(v_{y'y})^*(\xi|_{D_{y'y}})$ indeed defines an element in $\Gamma(Z_y,\A)$ using condition (3) in Definition \ref{def: coarse fibration structure}. Since $v$ is a partial translation, there are only finitely many $y'$ such that $D_{y'y}$ is nonempty. This proves that $\Gamma_{ind}^Z(X,\A)$ is a coarse $X$-algebra,
We can then define the twisted Roe algebra of $X$ using the coarse algebra $\Gamma^Z_{ind}(X,\A)$ and denote it by $C_{\Ga_{ind}^Z(X,\A)}^*(X,\A)$.

\begin{Thm}[Coarse imprimitivity theorem]\label{thm: coarse imprimitivity}
Let $X$, $Y$ and $Z$ be metric space with bounded geometry such that $Z\hookrightarrow X\to Y$ is a coarse $Z$-fibration. For any coarse $Z$-algebra $\Ga(Z,\A)$, the following two $C^*$-algebras are Morita equivalent:
$$C_{\Ga_{ind}^Z(X,\A)}^*(X,\A)\sim_{\text{Morita}} C_{\Ga(Z,\A)}^*(Z,\A).$$
\end{Thm}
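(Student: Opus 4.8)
The plan is to construct an explicit Morita equivalence bimodule between $C^*_{\Ga^Z_{ind}(X,\A)}(X,\A)$ and $C^*_{\Ga(Z,\A)}(Z,\A)$, mimicking the construction used in the classical imprimitivity theorem (Green's imprimitivity theorem), where the crossed product $C_0(G/H) \rtimes G$ is Morita equivalent to $C^*(H)$. First I would fix a countable dense subset $\wh{X} \subseteq X$ compatible with the fibration, so that $\wh{X} = \bigsqcup_{y \in \wh{Y}} \wh{Z_y}$ for a countable dense $\wh{Y} \subseteq Y$ and countable dense subsets $\wh{Z_y} \subseteq Z_y$, and use the bijective coarse equivalences $\phi_y$ to identify each $\wh{Z_y}$ with a copy of a fixed $\wh{Z} \subseteq Z$. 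Working with geometric modules, the natural candidate bimodule is a completion of the space of norm-bounded, locally compact, finite-propagation kernels $k : \wh{X} \times \wh{Z} \to \A$ — more precisely, kernels on $\wh{X} \times \wh{Z_y}$ glued over $y$ — with appropriate decay (an $\ell^\infty$-over-$Y$, $C_0$-type condition) matching membership in the coarse algebra $\Ga^Z_{ind}(X,\A)$ on one side and $\Ga(Z,\A)$ on the other.

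Next I would define the two commuting actions and the two inner products. The left action of $C^*_{\Ga^Z_{ind}(X,\A)}(X,\A)$ is by composition of kernels on the $\wh{X}$-variable, and the right action of $C^*_{\Ga(Z,\A)}(Z,\A)$ is by composition on the $\wh{Z}$-variable; the $C^*_{\Ga(Z,\A)}(Z,\A)$-valued inner product is $\langle k, l\rangle(z,z') = \sum_{x \in \wh{X}} k(x,z)^* l(x,z')$ and the $C^*_{\Ga^Z_{ind}(X,\A)}(X,\A)$-valued inner product is $_{}\langle k, l\rangle(x,x') = \sum_{z} k(x,z) l(x',z)^*$ (using that the fibers over distinct $y$ are disjoint to control propagation in $X$). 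The key points to verify are: (i) these sums converge and land in the correct twisted Roe algebras — here one uses bounded geometry, local compactness, and crucially that the induced coarse $X$-algebra is precisely the "induced" object so that the $_{}\langle\cdot,\cdot\rangle$ inner product produces kernels whose partial-translation restrictions $T^v$ lie in $\Ga(Z_y,\A)$ fiberwise; (ii) the compatibility relation $_{}\langle k,l\rangle \cdot m = k \cdot \langle l, m\rangle$; (iii) fullness of both inner products, which should follow by exhibiting, for each generator of the respective twisted Roe algebra, an explicit finite combination of rank-one-type kernels — using condition (2) in the subset hypothesis (the disjoint Borel cover $\{U_z\}$) and the stability isomorphism $\psi$ to produce the requisite matrix units in $\A$.

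The main obstacle I expect is verifying that the inner products actually take values in the twisted Roe algebras with the correct coarse-algebra coefficients, rather than in some larger algebra — in particular, checking that $_{}\langle k, l\rangle^v \in \Ga_{ind}^Z(X,\A)$ for every partial translation $v$ on $X$, and symmetrically that $\langle k, l\rangle^v \in \Ga(Z,\A)$. This is where the precise definition of $\Ga_{ind}^Z(X,\A)$ as $\bigoplus_{y} \Ga(Z_y,\A)$ (with its $C_0$-over-$Y$ behavior) and condition (3) of Definition \ref{def: coarse fibration structure} (the uniform coarse equivalences $\phi_{y,R}$ on $R$-neighborhoods of fibers) must be used carefully: a partial translation on $X$ of propagation $R$ moves a point within a fiber $Z_y$ to a point in a fiber $Z_{y'}$ with $d_Y(y,y') \le S$, and one must track how the twisting kernel transforms under the composite $\phi_{y'} \circ v \circ \phi_y^{-1}$, showing it remains inside $\Ga(Z,\A)$ up to the coarse-algebra closure. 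I would handle this by reducing, via the $\Delta_R = \bigsqcup_i E^i$ decomposition as in Example \ref{exa: coefficient in c0(X)}, to the case of a single partial translation and then invoking condition (3) to get the required uniform control; once the bimodule is shown to be well-defined with full, compatible inner products, completeness and the conclusion $C^*_{\Ga_{ind}^Z(X,\A)}(X,\A) \sim_{\text{Morita}} C^*_{\Ga(Z,\A)}(Z,\A)$ follow from the standard Rieffel framework.
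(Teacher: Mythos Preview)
Your proposal is essentially correct, but the paper takes a more economical route. Rather than building an imprimitivity bimodule from scratch, the paper observes that for a fixed $y \in Y$ the canonical projection $P_y : \H_{X,\A} \to \H_{y,\A}$ is a multiplier of $C^*_{\Ga_{ind}^Z(X,\A)}(X,\A)$ with $P_y \cdot C^*_{\Ga_{ind}^Z(X,\A)}(X,\A) \cdot P_y \cong C^*_{\Ga(Z,\A)}(Z,\A)$ (via the unitary $U_y$ covering $\phi_y$), and then simply shows this corner is \emph{full}. The fullness argument is exactly the computation you flag as the main obstacle: one takes $T$ with finite propagation and support in finitely many fibers, decomposes it as $\sum T_{y_1,y_2}$ with $T_{y_1,y_2} = P_{y_1}TP_{y_2}$, and factors each piece through the corner as $(V_{y_1} S^{1/2} V_y^*) \cdot P_y \cdot (V_y S^{1/2} V_{y_2}^*)$ using the isometries $V_y = W_y U_y^*$ and a square root. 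Your bimodule of kernels $k : \wh X \times \wh Z \to \A$ is nothing other than $C^*_{\Ga_{ind}^Z(X,\A)}(X,\A) \cdot P_y$, so the two arguments are the same object viewed differently; the paper's presentation just avoids having to check the Rieffel axioms one by one and replaces them with a single fullness verification followed by an appeal to \cite{BGR1977}. Your plan to use the $\Delta_R = \bigsqcup_i E^i$ decomposition and condition~(3) of Definition~\ref{def: coarse fibration structure} is on target and is implicitly what the paper uses when it asserts $U_{y_1} T_{y_1,y_2} U_{y_2}^* \in \IC_{\Ga(Z,\A)}[Z,\A]$.
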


\begin{proof}
Let $\H_{X,\A}=\ell^2(X,\A)=\bigoplus_{y\in Y}\ell^{2}(Z_y,\A)$ be the geometric $X$-module. The geometric module of $Z_y$ is given by $\H_{y,\A}=\ell^2(Z_y,\A)$, therefore we can write $\H_{X,\A}=\bigoplus_{y\in Y}\H_{y,\A}$. Since $\phi_y: Z_y\to Z$ is a coarse equivalence, one can find a covering unitary
\begin{equation}\label{eq: unitary to Z}U_y: \H_{y,\A}\to\H_{Z,\A}=\ell^2(Z,\A),\end{equation}
and $(\phi_y)_*=Ad(U_y)$ induces a $C^*$-isomorphism between $C^*_{\Ga(Z,\A)}(Z,\A)$ and $C^*_{\Ga(Z_y,\A)}(Z_y,\A)$. Define the inclusion
\begin{equation}\label{eq: isometry y to X}W_y: H_{y,\A}\to\H_{X,\A},\end{equation}
Then for each $y\in Y$, the algebra $\IC_{\Ga(Z,\A)}[Z,\A]$ can be viewed as a subalgebra of $\IC_{\Ga_{ind}^Z(X,\A)}[X,\A]$ under the conjugation of the isometry
\begin{equation}\label{eq: isometry Z to X}V_y=W_y\circ U_y^*: \H_{Z,\A}\to \H_{X,\A}.\end{equation}

For any element $y\in Y$, we define 
$$P_y: \H_{X,\A}\to \H_{y,\A}$$
to be the canonical projection from $\H_{X,\A}$ onto $\H_{y,\A}$. It is obvious that the propagation of $P_y$ is exactly $0$. Thus $P_y$ is a multiplier of $C^*_{\Ga^Z_{ind}(X,\A)}(X,\A)$. Moreover, one also has that
$$P_y\cdot\mathbb{C}_{\Ga_{ind}^Z(X,\A)}[X,\A]\cdot P_y=\mathbb{C}_{\Ga(Z_y,\A)}[Z_y,\A]\cong \IC_{\Ga(Z,\A)}[Z,\A].$$
Thus $\IC_{\Ga(Z,\A)}[Z,\A]$ can be seen as a corner of $\IC_{\Ga_{ind}^Z(X,\A)}[X,\A]$. It suffices to show for fixed $y\in Y$ this corner is full, see \cite{BGR1977} or \cite[Definition 1.7.9]{HIT2020}.

For any fixed $R>0$ and any finite subset $K\subseteq Y$, we obtain a coarse fibration $Z\hookrightarrow p^{-1}(K)\to K$. Then we consider the twisted Roe algebra $C^*_{\Gamma(p^{-1}(K), \A)}(p^{-1}(K), \A)$ with coefficients in the reduced algebra $\Gamma(p^{-1}(K), \A)$. Define $\IC_{\Ga_{ind}^Z(p^{-1}(K),\A)}[X,\A]_R$ to be the subalgebra of $\IC_{\Ga_{ind}^Z(X,\A)}[X,\A]$ consisting of all functions $T: X\times X\to\A$ satisfying that\begin{itemize}
\item $\prop(T)\leq R$;
\item for any partial translation $v:D \to R$, $T^v\in\bigoplus_{y\in K}\Ga(Z_y,\A)$.
\end{itemize}
It suffices to show
\begin{equation}\label{eq: inclusion}
\IC_{\Ga_{ind}^Z(p^{-1}(K),\A)}[X,\A]_R\subseteq\IC_{\Ga_{ind}^Z(X,\A)}[X,\A]\cdot P_y\cdot \IC_{\Ga_{ind}^Z(X,\A)}[X,\A],
\end{equation}
where for any $R\geq 0$ and $K\subseteq Y$ a compact subset. Since $p: X\to Y$ is bornologous, there exists $S>0$ such that $d_Y(p(x),p(x'))\leq S$ for any $(x,x')\in X$ with $d(x,x')\leq R$. Denoted by $K'=B(K,S)\subseteq Y$ the $S$-neighborhood of $K$ in $Y$. Then by definition, for any $T\in\IC_{\Ga_{ind}^Z(p^{-1}(K),\A)}[X,\A]_R$, one has that $T(x,y)=0$ whenever $p(x)\notin K$. Since the propagation of $T$ is less than $R$, the support of $T$ is contained in $p^{-1}(K)\times p^{-1}(K')$ for some bounded subset $K'\subseteq Y$. Since $Y$ has bounded geometry, there are only finitely many fibers we need to consider. For any $y_1,y_2\in K'$, we define
$$T_{y_1,y_2}=W^*_{y_1}TW_{y_2},$$
where $W_{y}$ is defined as in \eqref{eq: isometry y to X}. One can see that $T_{y_1,y_2}$ is equal to $P_{y_1}TP_{y_2}$ as an operator form $\H_{y_2,\A}$ to $\H_{y_1,\A}$. Thus $T$ can be written as a finite sum of $\{T_{y_1,y_2}\}_{y_1,y_2\in K'}$. Then it suffices to show such an element $T_{y_1,y_2}$ belongs to the right side in \eqref{eq: inclusion}. Let $U_{y_1}$ be the unitary defined as in \eqref{eq: unitary to Z}, then by the condition (3) in Definition \ref{def: twisted Roe algebra}, one can see that
$$S_{y_1,y_2}=U_{y_1}\cdot T_{y_1,y_2}\cdot U^*_{y_2}\in \IC_{\Ga(Z,\A)}[Z,\A].$$
Without loss of generality, we may assume that $S_{y_1,y_2}$ is positive. Take $S^{1/2}_{y_1,y_2}$ to be its square root. Then
$$V_{y_1}\cdot S^{1/2}_{y_1,y_2}\cdot  V^*_{y}\quad\text{and}\quad V_{y}\cdot S^{1/2}_{y_1,y_2}\cdot V^*_{y_2}$$
defines two elements in $\IC_{\Ga_{ind}^Z(X,\A)}[X,\A]$ (by the condition (3) in Definition \ref{def: twisted Roe algebra}), where $V_{y}$ is defined as in \eqref{eq: isometry Z to X}. Notice that $P_yV_y=V_y$ and $W_yW^*_y=P_y$, one can then compute that
\[\begin{split}
V_{y_1}\cdot S^{1/2}_{y_1,y_2}\cdot  V^*_{y}\cdot P_y\cdot V_{y}\cdot S^{1/2}_{y_1,y_2}\cdot  V^*_{y_2}&=V_{y_1}\cdot S_{y_1,y_2} \cdot V^*_{y_2}\\
&=W_{y_1}U_{y_1}^*\cdot U_{y_1}\cdot T_{y_1,y_2}\cdot U^*_{y_2} \cdot U_{y_2}W^*_{y_2}\\
&=W_{y_1}W^*_{y_1}TW_{y_2}W^*_{y_2}=P_{y_1}TP_{y_2}=T_{y_1,y_2}.
\end{split}\]
This finishes the proof.
\end{proof}

\begin{Rem}
Example \ref{exa: coefficient in c0(X)} can be viewed as a special case of Theorem \ref{thm: coarse imprimitivity}. In fact, we consider the coarse fibration given by $\{pt\}\hookrightarrow X\to X$. The compact operators $\K$ form a coefficient algebra of a single point $\{pt\}$; in this case, the twisted Roe algebra for $\{pt\}$ is $\K$. The induced coarse $X$-algebra is given by $C_0(X,\K)$ by definition. Since the twisted Roe algebra $C_{C_0(X,\K)}^*(X, \K)$ admits a countable approximate unit, then $C_{C_0(X,\K)}^*(X, \K)$ is stably isomorphic to $\K$ by \cite{BGR1977}.
\end{Rem}

\section{The coarse Baum-Connes conjecture and coarsely proper algebras}

In this section, we shall introduce the twisted version of the coarse Baum-Connes conjecture. 

\subsection{The twisted coarse Baum-Connes conjecture}
Let $M$ be a proper metric space and $X\subseteq M$ with bounded geometry such that $X$ is coarse equivalent to $M$. Let us now fix a coarse $X$-algebra.

\begin{Def}[Twisted localization algebra]\label{def: twisted localization algebra}
The \emph{algebraic twisted localization algebra of $M$ with coefficients in $\Gamma(X,\A)$}, denoted by $\mathbb{C}_{L,\Ga(X,\A)}[M,\A]$, is defined to be the $*$-algebra of all uniformly bounded and uniformly continuous functions $g:\IR_+\to\IC_{\Ga(X,\A)}[M,\A]$ such that
$$\prop(g(t))\to 0\quad\text{as}\quad t\to\infty.$$
The \emph{twisted localization algebra} $C^*_{L, \Ga(X, \A)}(M,\A)$ is defined to be the completion of $\mathbb{C}_{L,\Ga(X,\A)}[M,\A]$ under the norm 
$$\|g\|=\sup_{t\in [0,\infty)}\|g(t)\|,$$
for all $g\in \mathbb{C}_{L,\Ga(X,\A)}[M,\A]$.
\end{Def}

There exists a canonical evaluation-at-zero map
$$ev:C^*_{L,\Ga(X,\A)}(M,\A)\to C^*_{\Ga(X,\A)}(M,\A),~~g\mapsto g(0).$$
This is a $*$-homomorphism, thus it induces a $*$-homomorphism
$$ev_*: K_*(C^*_{L,\Ga(X,\A)}(M,\A))\to K_*(C^*_{\Ga(X,\A)}(M,\A)).$$

\begin{Def}[Rips complex]
Let $X$ be a discrete metric space with bounded geometry. For each $d\geq 0$, the \emph{Rips complex} $P_d(X)$ at scale $d$ is defined to be the simplicial polyhedron in which the set of vertices
is $X$, and a finite subset $\{x_0,x_1,\cdots,x_n\}\subseteq X$ spans a simplex if and only if $d(x_i,x_j )\leq d$ for all $0\leq i,j\leq n$.
\end{Def}

There exists a canonical \emph{semi-spherical metric} on the Rips complex defined as in \cite[Definition 7.2.8]{HIT2020}. Under this metric, one can check that $(P_0(X),d_{P_0})$ identifies isometrically with $(X,d)$ and the canonical inclusion $i_d: X\to P_d(X)$ is a coarse equivalence for each $d\geq 0$, see \cite[Proposition 7.2.11]{HIT2020} for details. Fixing a coarse $X$-algebra $\Ga(X, \A)$, we define the twisted localization algebra and the twisted Roe algebra with coefficients in $\Ga(X, \A)$ for the Rips complex associated with $\Theta(X,\A)$. For each $d>0$, we obtain a homomorphism 
$$ev_*: K_*(C^*_{L,\Ga(X,\A)}(P_d(X),\A))\to K_*(C^*_{\Ga(X,\A)}(P_d(X),\A))$$
indiced by the evaluation-at-zero map. 

If $d<d'$, then $P_d(X)$ is included in $P_{d'}(X)$ as a subcomplex via a simplicial map. Passing to the inductive limit, we obtain the \emph{twisted assembly map}
\begin{equation}\label{eq: twisted assembly map}
\mu_{\Ga(X,\A)}:\lim_{d\to\infty}K_*(C^*_{L,\Ga(X,\A)}(P_d(X),\A))\to K_*(C_{\Ga(X,\A)}^*(X,\A)).
\end{equation}

\begin{con}[Twisted coarse Baum-Connes conjecture with coefficients]\label{conj: CBC}
Let $X$ be a metric space with bounded geometry. Then for any coarse $X$-algebra $\Ga(X,\A)$, the twisted assembly map $$\mu_{\Ga(X,\A)}:\lim_{d\to\infty}K_*(C^*_{L,\Ga(X,\A)}(P_d(X),\A))\to K_*(C_{\Ga(X,\A)}^*(X,\A))$$
is an isomorphism.
\end{con}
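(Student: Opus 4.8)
The plan is \emph{not} to prove Conjecture \ref{conj: CBC} for every bounded geometry metric space and every coarse algebra --- already the untwisted coarse Baum--Connes conjecture is open at this level of generality and fails for suitable expanders, so some genuine geometric input is unavoidable --- but to establish it by reducing to two building blocks that together cover all the cases of interest, in particular every finitely generated group which is an iterated extension of coarsely embeddable groups. \textbf{Block A:} the conjecture holds for a space $X$ that coarsely embeds into Hilbert space, \emph{for every} coarse $X$-algebra $\Ga(X,\A)$. \textbf{Block B (permanence):} the class of spaces for which the twisted conjecture holds \emph{with all coarse-algebra coefficients} is closed under forming coarse $Z$-fibrations $Z\hookrightarrow X\xrightarrow{p}Y$ (this is where the novelty lies, and where one must avoid the Dirac--dual-Dirac method, since $X$ need not embed coarsely even when $Z$ and $Y$ do).

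For Block A I would carry Yu's Dirac--dual-Dirac argument through the twisting algebra. Fix a coarse embedding $g\colon X\to\H$ into a separable Hilbert space, and build a ``twisted Roe algebra at infinity'' by replacing the fiber $\A$ with the inductive limit, over finite-dimensional subspaces $V\subseteq\H$, of $\A\wox\Cl(V)$-type coefficient algebras carrying the Bott generators attached to $V$, inserted fiberwise over $P_d(X)$. Condition (2) in the definition of an algebraic coarse $X$-algebra (stability under partial translations) is exactly what lets one promote $\Ga(X,\A)$ to a coarse $X$-algebra on this enlarged object, since at each finite stage the Bott maps move points only a bounded distance. One then has a commuting diagram of assembly maps and must show: (a) the assembly map for the twisted-at-infinity algebra is an isomorphism, via a partition-of-unity Mayer--Vietoris descent on $P_d(X)$ reducing it to contractible pieces where it becomes finite-dimensional Bott periodicity with coefficients in $\Ga(X,\A)$; and (b) the ``Dirac'' comparison map from the twisted-at-infinity algebra to the honest twisted Roe algebra induces an isomorphism on $K$-theory, using coarse embeddability to make the index at infinity of the Bott element invertible. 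Together (a) and (b) give that $\mu_{\Ga(X,\A)}$ is an isomorphism.

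For Block B the engine is the coarse imprimitivity theorem, Theorem \ref{thm: coarse imprimitivity}. I would first upgrade that Morita equivalence so that it is compatible with the localization algebras and with the Rips filtration $P_d\subseteq P_{d'}$, obtaining a commuting square that identifies $\mu_{\Ga_{ind}^Z(X,\A)}$ for $X$ with $\mu_{\Ga(Z,\A)}$ for $Z$; the hypothesis on the fiber $Z$ then forces the conjecture for $X$ with \emph{induced} coefficients $\Ga_{ind}^Z(X,\A)$. The new content is to pass from induced coefficients to an \emph{arbitrary} coarse $X$-algebra: fiber $\Ga(X,\A)$ over the base $Y$, decompose $Y$ by a controlled Mayer--Vietoris, observe that over each piece the fibration coarsely trivializes (using the uniformity of condition (3) in Definition \ref{def: coarse fibration structure}) so that the twisted Roe algebra of $p^{-1}(\text{piece})$ is of induced type up to the isomorphisms already available, and then glue the pieces using the hypothesis on $Y$ and the five lemma --- the coarse analogue of the fact that the Baum--Connes property for groups is inherited by extensions.

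The main obstacle, I expect, is the last step of Block B together with the bookkeeping that makes the twisting coherent across all the relevant directions at once: the stabilization isomorphism $\psi\colon\A\ox\K(\ell^2(\wh M))\to\A$ used to define $\widetilde T$, and hence the defining condition $T^v\in\Ga(Z,\A)$, must be shown compatible simultaneously with (i) increasing the Rips parameter $d\to d'$, (ii) the evaluation-at-zero map $ev$, and (iii) the Bott/Clifford enlargement of Block A, and all of this uniformly over the fibers of $p$. The bounded-geometry hypotheses on $X$, $Y$, $Z$ and the uniformity built into the coarse fibration structure are precisely what should make this go through, but organizing it cleanly --- rather than any single conceptual leap --- will be the bulk of the work.
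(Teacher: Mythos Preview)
First, a framing remark: Conjecture~\ref{conj: CBC} is stated as a conjecture, and the paper does not prove it in full generality; it proves the two results your proposal calls Block~A (Theorem~\ref{thm: CBC for CE}) and Block~B (Theorem~\ref{thm: main theorem}). Your Block~A is essentially the paper's argument: a coarsely proper algebra built from the embedding into~$\H$ (Example~\ref{exa: CE}), tensored with the given $\Ga(X,\B)$, feeds into the coarse Green--Julg result (Theorem~\ref{thm: twisted CBC}) and a Dirac/dual-Dirac diagram finishes it.

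Block~B is where your plan diverges from the paper, and where there is a genuine gap. Your two key moves are (i) upgrade the coarse imprimitivity Morita equivalence to an identification of the assembly maps $\mu_{\Ga_{ind}^Z(X,\A)}$ and $\mu_{\Ga(Z,\A)}$, and (ii) pass from induced to arbitrary coefficients by ``decomposing $Y$ by a controlled Mayer--Vietoris'' and gluing with the five lemma. Step~(ii) is the problem: a general bounded-geometry $Y$ has no controlled Mayer--Vietoris decomposition at the Roe-algebra level --- that would be a finite-decomposition-complexity-type hypothesis, not the hypothesis you actually have, which is the \emph{assembly isomorphism} for $Y$ with arbitrary coefficients. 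The twisted CBC for $Y$ is not a cutting input; it is an isomorphism between a localization side and a Roe side, and it can only be invoked once you have rewritten the problem as an assembly map \emph{for $Y$}. Your sketch never produces such a rewriting.

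The paper's route, following Oyono-Oyono's treatment of group extensions, supplies exactly this missing rewriting. It first replaces the conjecture for $X$ with coefficient $\Ga(X,\A)$ by the conjecture for the \emph{product} $X\times Y$ with a diagonal-type coefficient $\B$ (Lemma~\ref{lem: step 1 reduction to product}); then it introduces a \emph{partial localization algebra} $p_Y\text{-}C^*_{L,\B}(X\times P_{d'}(Y),\A)$ that localizes only in the $Y$-direction, and shows this side identifies with a twisted Roe algebra of $Y$ with Roe-algebra-of-$X$-valued coefficients (Lemma~\ref{lem: step 2 reduction to iY}), so that the hypothesis on $Y$ applies as an assembly isomorphism, not a decomposition. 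Only after this does a cutting-and-pasting argument on $P_{d'}(Y)$ (now legitimate, because the $Y$-direction is localized) reduce the remaining map $i_Y$ to a uniform assembly map over the fibers, which is finally handled by the hypothesis on $Z$ (Lemma~\ref{lem: step 3 cutting and pasting} and Step~4). The imprimitivity theorem is used only in spirit and in the final reduction to $Z$, not as a direct identification of the full assembly maps for $X$ and $Z$ as you propose in~(i). If you want to salvage your outline, the essential missing ingredient is this product trick and the partial-localization intermediary; without them there is no mechanism to feed the assembly hypothesis for $Y$ into the argument.
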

If we take $\Ga(X,\A)=\ell^{\infty}(X,\A)$, then the twisted assembly map is exactly the coarse Baum-Connes assembly map with coefficients in any $C^*$-algebras. Thus, the Conjecture \ref{conj: CBC} is stronger than the classic coarse Baum-Connes conjecture. 

\subsection{The coarse Green-Julg's Theorem}
In \cite{GLWZ2023JFA}, a notion of \emph{coarsely proper algebra} was introduced to give a conceptual framework for the geometric Dirac-dual-Dirac method. In the rest of this section, we will introduce a coarse analogue of the generalized Green-Julg's theorem.
\begin{Def}\label{def: coarsely proper algebra}
A coarse $X$-algebra $\Ga(X,\A)$ is called \emph{coarsely proper} if there exist a locally compact, Hausdorff space $Z$ such that $\A$ is a $Z$-algebra, a map $f:X\to Z$, and two non-decreasing functions $\rho_+,\rho_-:\IR_+\to\IR_+$ such that\begin{itemize}
\item[(1)] for each $R>0$ and $x\in X$, there exists a neighborhood $O_{x,R}\subseteq Z$ of $f(x)$ such that\begin{itemize}
\item[(i)] $O_{x,R}\subseteq O_{x,R'}$ for all $R<R'$ and $\bigcup_{R>0}O_{x,R}$ is dense in $Z$ for each $x\in X$;
\item[(ii)] $O_{x,R}\subseteq O_{x',\rho_+(d(x,x')+R)}$ for any $x,x'\in X$;
\item[(iii)] $O_{x,\rho_-(R)}\cap O_{x',\rho_-(R)}=\emptyset$ whenever $d(x,x')\geq 2R$;
\end{itemize}
\item[(2)] $\Theta(X,\A)=\bigcup_{R\geq 0}\Theta(X,\A)_R$ is dense in $\Ga(X, \A)$, where
$$\Theta(X,\A)_R=\{\xi\in\Theta(X,\A)\mid \supp(\xi(x))\subseteq O_{x,R}\subseteq Z\}$$
is an ideal of $\Theta(X,\A)$.
\end{itemize}\end{Def}

\begin{Exa}\label{exa: CE}
Let $X$ be a metric space with bounded geometry that admits a coarse embedding into a Hilbert space. Say $f: X\to\H$ is a coarse embedding. Let $\A(\H)$ be the algebra introduced in \cite[Section 5]{Yu2000}, which is a $(\H\times\IR_+)$-algebra. We shall use the notation in \cite[Section 5]{Yu2000}. Define
$$O_{x,R}=\{(\eta,t)\in\H\times\IR_+\mid \|\eta-f(x)\|^2+t^2\leq R^2\}.$$
The coarse $X$-algebra $\Ga(X,\A(\H)\ox\K)_R$ is defined to be the $C^*$-algebra consisting of all functions $\xi: X\to\A(\H)\ox\K$ such that\begin{itemize}
\item[(1)] $\supp(\xi(x))\subseteq O_{x,R}$ for all $x\in X$;
\item[(2)] $\exists N\in\IN$ such that $\xi(x)\in\beta(\A(W_N(x)))$, where $\beta$ is the Bott map as in \cite{HKT1998};
\item[(3)] $\exists K\geq 0$ such that $\|\nabla_{\eta}a_x\|\leq K$ for any $x\in X$, $\eta\in W_N(x)$ with norm $1$ and $a_x\in \A(W_N(x))$ such that $\beta(a_x)=\xi(x)$.
\end{itemize}
Define
$$\Theta(X,\A(\H)\ox\K)=\bigcup_{R\geq 0}\Theta(X,\A(\H)\ox\K)_R.$$
It is obvious that $\Theta(X,\A(\H)\ox\K)$ is an algebraic coarse $X$-algebra. Indeed, for any partial translation $v$ with $\prop(v)\leq R'$ and $\xi\in\Theta(X,\A(\H)\ox\K)_R$, since $f$ is a coarse embedding, there exists some $S>0$ associated with $f$ such that $v^*(\xi)\in \Theta(X,\A(\H)\ox\K)_{R+S}$. Therefore, the completion $\Ga(X, \A(\H)\ox \K)$ of $\Theta(X,\A(\H)\ox\K)$ is a coarse $X$-algebra. 
\end{Exa}

It seems that the coarsely proper algebra defined in Definition \ref{def: coarsely proper algebra} is different from the one in \cite[Definition 3.1]{GLWZ2023JFA}, however, we will point out that they are actually the same. 
\begin{Rem}\label{rem: comparing coarsely proper algebra}
Recall that in \cite{GLWZ2023JFA}, a $C^*$-algebra $\A$ is a coarsely proper algebra associated with the coarse embedding $f: X\to Y$, if there is a locally compact, second countable, Hausdorff space $Z$ such that $\A$ is a $Z$-$C^*$-algebra and a map $\varphi: Y\to Z$ such that for each $x\in X$ and $R>0$, there exists an open set $O_{x, R}\subseteq Z$ such that\begin{itemize}
\item[(1)] $O_{x,R}\cap \varphi(Y)=\varphi(B_Y(f(x),R))$, $O_{x,R}\subseteq O_{x,R'}$ for all $R<R'$ and $\bigcup_{R>0}O_{x,R}$ is dense in $Z$ for each $x\in X$;
\item[(2)] for a sequence $\{B_Y(f(x_i),R)\}_{i\in I}$ with $B_Y(f(x_i),R)\cap B_Y(f(x_j),R)=\emptyset$ for any $i\ne j$, the corresponding sequence of open set $\{O_{x_i,R}\}$ also satisfies that $O_{x_i,R}\cap O_{x_j,R}=\emptyset$ for any $i\ne j$.
\end{itemize}
If a $C^*$-algebra $\A$ is coarsely proper in the sense of \cite{GLWZ2023JFA}, then for each $R>0$, we can define $\Theta(X,\A\ox\K)_R\subseteq \ell^{\infty}(X,\A\ox\K)$ to be the set consisting of all functions $\xi: X\to\A\ox\K$ such that $\xi(x)\subseteq\A_{O_{x,R}}\ox\K$, where $\A_{O_{x,R}}$ is the ideal of $\A$ consisting of all elements $a$ with $\supp(a)\subseteq O_{x,R}$. Then
$$\Theta(X,\A\ox\K)=\bigcup_{R\geq 0}\Theta(X,\A\ox\K)_R$$
is a coarsely proper algebra in the sense of Definition \ref{def: coarsely proper algebra}. One can check that the completion $\Ga(X, \A\ox\K)$ of $\Theta(X,\A\ox\K)$ is a coarse $X$-algebra, because $f$ is a coarse embedding. The second condition in \cite[Definition 3.1]{GLWZ2023JFA} guarantees that $\Theta(X,\A\ox\K)$ satisfies the condition (2) in Definition \ref{def: coarsely proper algebra}.
\end{Rem}

The following result is a coarse analogue of the generalized Green-Julg's theorem which claims that for any locally compact group $G$ and any proper $G$-$C^*$-algebra $A$, the Baum-Connes with coefficients in $A$ holds for $G$.

\begin{Thm}\label{thm: twisted CBC}
Let $\Ga(X,\A)$ be a coarsely proper $X$-algebra. Then the twisted assembly map 
$$\mu_{\Ga(X,\A)}:\lim_{d\to\infty}K_*(C^*_{L,\Ga(X,\A)}(P_d(X),\A))\to K_*(C_{\Ga(X,\A)}^*(X,\A))$$  
is an isomorphism.
\end{Thm}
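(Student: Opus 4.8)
The plan is to reduce the theorem to a ``localized'' statement and then run a Mayer--Vietoris / cutting-and-pasting argument over the index set $X$, exactly as in the proof that the coarse Baum--Connes conjecture holds for coarsely embeddable spaces, but with the Hilbert-space coefficient algebra $\A(\H)$ replaced by the abstract coarsely proper algebra $\Ga(X,\A)$. Concretely, since $\Ga(X,\A)$ is coarsely proper we have the locally compact space $Z$ on which $\A$ is a $Z$-algebra, the map $f:X\to Z$, and the nested family of neighborhoods $O_{x,R}$ satisfying conditions (i)--(iii) in Definition \ref{def: coarsely proper algebra}. Property (2) says $\Ga(X,\A)$ is the closure of the increasing union $\bigcup_R \Theta(X,\A)_R$ of ideals, and since $K$-theory commutes with inductive limits it suffices to prove that each ``$R$-truncated'' twisted assembly map is an isomorphism in the limit $d\to\infty$; the point of conditions (ii) and (iii) is precisely that at a fixed scale $R$ the support sets $O_{x,\rho_-(R)}$ become disjoint once points of $X$ are far apart, so the $R$-truncated twisted Roe algebra decomposes, up to controlled error, as a direct sum / limit of ``local'' pieces indexed by balls in $X$.

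The main steps, in order, would be: (1) Set up the twisted localization algebra $C^*_{L,\Ga(X,\A)}(P_d(X),\A)$ and show, by a standard Eilenberg-swindle-type argument, that its $K$-theory is insensitive to the coefficient in the expected way — i.e. establish a ``local index isomorphism'' identifying $\lim_d K_*(C^*_{L,\Ga(X,\A)}(P_d(X),\A))$ with the $K$-theory of an appropriate commutative/proper object, using that on small simplices the propagation-$\to 0$ condition forces the coefficient to be essentially evaluated fiberwise over $Z$. (2) Filter $C^*_{\Ga(X,\A)}(X,\A)$ and $C^*_{L,\Ga(X,\A)}(P_d(X),\A)$ by the support parameter $R$ and by subsets of $X$, obtaining compatible short exact sequences; this is where Definition \ref{def: coarsely proper algebra}(1)(iii) does the real work, decomposing the $R$-truncated algebra over a uniformly bounded cover of $X$ into a block-diagonal piece plus an ideal supported near the ``overlaps''. (3) On each block — which corresponds to a bounded region of $X$ together with coefficients supported in a single $O_{x,R}$ — the twisted Roe algebra and twisted localization algebra are, up to Morita equivalence and the usual local-to-global machinery for Rips complexes, the algebra of a point with proper coefficients, where the assembly map is an isomorphism by the (proper, hence trivially Baum--Connes-satisfying) base case. (4) Assemble via the five lemma applied to the Mayer--Vietoris sequences from step (2), then pass to the inductive limit over $R$ and over $d$.

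I would expect the technical heart — and the main obstacle — to be step (2)--(3): making precise the ``cutting along a uniformly bounded cover of $X$'' so that the resulting decomposition is compatible with \emph{both} the Roe-algebra side and the localization-algebra side and with the evaluation map between them, while controlling the fact that finite propagation operators genuinely mix neighboring support sets $O_{x,R}$ and $O_{x',R}$ (they are only disjoint at scale $\rho_-(R)$, not at scale $R$). This is the twisted analogue of the delicate geometric argument in Yu's paper where one must show the ``ideal at infinity'' has vanishing $K$-theory; here one must show the overlap ideal is itself a twisted Roe algebra with \emph{strictly smaller} support parameter and invoke the induction hypothesis on $R$, or equivalently run the argument uniformly in $R$ using the contractibility built into conditions (i)--(iii) (the union $\bigcup_R O_{x,R}$ being dense in $Z$, so each fiber ``retracts to a point'' coarsely). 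Once that decomposition and its functoriality under $ev$ are in place, the five-lemma bookkeeping and the passage to the limits are routine, following \cite[Section 7]{HIT2020} and \cite{Yu2000} essentially verbatim with $\A(\H)$ replaced by $\Ga(X,\A)$.
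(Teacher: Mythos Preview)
Your outline follows the template of \cite{Yu2000}: filter by the support parameter $R$, cut the $R$-truncated algebra over a uniformly bounded cover of $X$ using the disjointness condition~(1)(iii), and reduce block-by-block to a proper base case. That strategy is essentially what \cite{GLWZ2023JFA} carries out, and the paper here does \emph{not} redo it. Instead, the paper observes (Remark~\ref{rem: comparing coarsely proper algebra}) that for a coarsely proper $\Ga(X,\A)$ the twisted Roe algebra $\IC_{\Ga(X,\A)}[X,\A]$ coincides with the twisted Roe algebra of \cite{GLWZ2023JFA}, so the assembly-map isomorphism is already proved there --- provided one uses \emph{their} localization algebra, which imposes the support constraint $\supp(g(t)(x,y))\subseteq O_{x,R}$ uniformly in $t$ and is therefore smaller than the one in Definition~\ref{def: twisted localization algebra}. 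The only remaining work is to show the two localization algebras have the same $K$-theory, and this is done not by cutting over $X$ but by induction on the dimension of the Rips complex $P_d(X)$ in the style of \cite[Theorem~3.2]{Yu1997}: both theories have strong Lipschitz homotopy invariance and Mayer--Vietoris, so one reduces to the $0$-skeleton, where the comparison becomes $\lim_{R} C_{ub}(\IR_+,\Ga(X,\A)_R)\to C_{ub}(\IR_+,\Ga(X,\A))$ and follows from quasi-stability \cite[Lemma~12.4.3]{HIT2020} together with condition~(2) of Definition~\ref{def: coarsely proper algebra}. Your route could be made to work, but it reproves \cite{GLWZ2023JFA} from scratch; the paper's route is much shorter because it isolates exactly the new piece.

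Two cautions on your sketch as written: the ``induction hypothesis on $R$'' for the overlap ideal is not how the argument actually runs --- after cutting, each piece is the twisted Roe algebra of a uniformly bounded family of subspaces and its assembly map is an isomorphism directly, with no induction on the support radius; and step~(1) as you phrase it (an Eilenberg swindle identifying the localization side with a ``commutative/proper object'') is not the mechanism used --- the localization algebra is handled by the same Mayer--Vietoris decomposition as the Roe algebra, with the evaluation map intertwining the two.
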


\begin{proof}
By Remark \ref{rem: comparing coarsely proper algebra}, if $\Ga(X,\A)$ is a coarsely proper algebra, then $\IC_{\Ga(X,\A)}[X,\A]$ is exactly the twisted Roe algebra introduced in \cite{GLWZ2023JFA}. Then it suffices to show that the twisted localization algebra $C^*_{L,\Ga(X,\A)}(P_d(X),\A)$ has the same $K$-theory with the twisted localization algebra defined as in \cite{GLWZ2023JFA}. Notice that in \cite{GLWZ2023JFA}, a function $g$ in the twisted localization algebra should satisfy that $\supp(g(t)(x,y))\subseteq O_{x, R}$ for all $t\in\IR_+$ which makes it smaller than $C^*_{L,\Ga(X,\A)}(X,\A)$ defined in Definition \ref{def: twisted localization algebra}.

By the same arguments in \cite[Proposition 3.7 and 3.11]{Yu1997}, one can show that the $K$-theory of twisted localization algebras is invariant under strongly Lipschitz homotopy equivalence, and it also has Mayer-Vietoris sequence. Moreover, following the arguments in \cite[Theorem 3.2]{Yu1997} of induction on dimensions, we obtain that the K-theory of the two versions of twisted localizations coincide.

For the induction on dimensions of simplicial complexes, we first need to show the two versions of twisted localization algebras have the same $K$-theory for a $0$-dimensional simplicial complex $X$ with bounded geometry. In this case, $X$ is a discrete space. It suffices to prove the canonical inclusion
$$\lim_{R\to\infty}C_{ub}(\IR_+,\Ga(X,\A)_R)\to C_{ub}(\IR_+,\Ga(X,\A))$$
induces an isomorphism on $K$-theory, where $C_{ub}(\IR_+,\Ga(X,\A)_R)$ and $C_{ub}(\IR_+,\Ga(X,\A))$ are the $C^*$-algebras of all uniformly continuous and bounded maps from the non-negative half real lines $\IR_+$ to $\Ga(X,\A)_R$ and $C_{ub}(\IR_+,\Ga(X,\A))$, respectively. Since $\A$ is stable, the algebras $\Ga(X,\A)_R$ and $\Ga(X,\A)$ are quasi-stable for each $R\geq 0$. By \cite[Lemma 12.4.3]{HIT2020}, the evaluation map from $C_{ub}(\IR_+,\Ga(X,\A))$ to $\Ga(X,\A)$ is an isomorphism (similar for $\Ga(X,\A)_R$). Thus it suffices to show
$$\lim_{R\to\infty}\Ga(X,\A)_R\to\Ga(X,\A)$$
induces an isomorphism on $K$-theory. This holds directly from the fact that $\bigcup_{R\geq 0}\Theta(X,\A)_R=\Theta(X,\A)$. 

For the inductive step, we can the decompose an $(n+1)$-dimensional simplicial complex into two parts, following the decomposition in \cite[Theorem 3.2]{Yu1997}. Then, the result follows from the Mayer-Vietoris argument and the five lemma. 
\end{proof}

In \cite{Yu2000}, Yu showed that the coarse Baum-Connes conjecture with coefficients in any $C^*$-algebra holds for a metric space which admits a coarse embedding into Hilbert space. More precisely, let $X$ be the coarsely embeddable space with bounded geometry, and $A$ any $C^*$-algebra. Then the coarse assembly map
$$
\mu: \lim\limits_{d \to \infty}K_*(C_L^*(P_d(X), A))\to \lim\limits_{d \to \infty}K_*(C^*(P_d(X), A))\cong K_*(C^*(X, A))
$$
induced by the evaluation-at-zero map on $K$-theory is an isomorphism. 

Now, we strengthen Yu's result \cite{Yu2000} to the twisted coarse Baum-Connes conjecture with coefficients in any coarse algebra. 

If $X$ coarsely embeds into Hilbert space, we denote $\Ga(X,\A(\H)\ox\K)$ the coarsely proper algebra defined as in Example \ref{exa: CE}. For any coarse $X$-algebra $\Ga(X,\B)$, we define the algebraic coarse $X$-algebra $\Theta(X,\A(\H)\ox\B)$ to be a $*$-subalgebra of $\ell^{\infty}(X,\A(\H)\ox\B)$ generated by the set
$$\left\{\xi\in \ell^{\infty}(X,\A(\H)\ox\B)\mid \exists \eta\in\Ga(X,\A),\zeta\in\Ga(X,\B)\text{ such that }\xi(x)=\eta(x)\ox\zeta(x)\right\}.$$
Let $\Ga(X,\A(\H)\ox\B)$ be the completion of $\Theta(X,\A(\H)\ox\B)$. 
It is obvious that $\Ga(X,\A(\H)\ox\B)$ is a coarsely proper algebra.

\begin{Thm}\label{thm: CBC for CE}
Let $X$ be a metric space with bounded geometry that admits a coarse embedding into Hilbert space. Then the twisted coarse Baum-Connes conjecture with coefficients holds for $X$.
\end{Thm}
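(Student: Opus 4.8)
The plan is to follow Yu's argument from \cite{Yu2000}, recast in the language of twisted Roe algebras, with Theorem \ref{thm: twisted CBC} as the geometric input and the Bott--Dirac machinery used to ``untwist.'' Fix an arbitrary coarse $X$-algebra $\Ga(X,\B)$ and a coarse embedding $f\colon X\to\H$. First I would form the coarsely proper algebra $\Ga(X,\A(\H)\ox\B)$ introduced just above (it is coarsely proper, as remarked). By Theorem \ref{thm: twisted CBC} the twisted assembly map
$$\mu_{\Ga(X,\A(\H)\ox\B)}\colon \lim_{d\to\infty}K_*\big(C^*_{L,\Ga(X,\A(\H)\ox\B)}(P_d(X),\A(\H)\ox\B)\big)\longrightarrow K_*\big(C^*_{\Ga(X,\A(\H)\ox\B)}(X,\A(\H)\ox\B)\big)$$
is an isomorphism; this is essentially the only place the coarse embedding enters in a non-formal way.

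Next I would set up the Bott transformation. The Higson--Kasparov--Trout Bott homomorphism $\beta$ of \cite{HKT1998} (built from the family $\beta_{W_N(x)}$ over finite-dimensional affine subspaces, positioned near $f(x)$), tensored with $\B$, gives for each $x\in X$ a homomorphism $\B\to\A(\H)\ox\B$ respecting the support condition at $x$ used in Example \ref{exa: CE}; because $f$ is a coarse embedding these assemble into a $*$-homomorphism $\beta\colon\Ga(X,\B)\to\Ga(X,\A(\H)\ox\B)$ of coarse $X$-algebras. This induces $*$-homomorphisms $\beta\colon C^*_{\Ga(X,\B)}(X,\B)\to C^*_{\Ga(X,\A(\H)\ox\B)}(X,\A(\H)\ox\B)$ and $\beta_L$ on the twisted localization algebras over every $P_d(X)$, compatibly with evaluation at zero and with the structure maps $P_d(X)\hookrightarrow P_{d'}(X)$. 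Passing to $K$-theory and to the limit over $d$ produces a commuting square whose bottom row is the isomorphism above and whose vertical maps I denote $\beta_{L,*}$ (left) and $\beta_*$ (right).

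The technical core is to show $\beta_{L,*}$ is an isomorphism --- a ``local Bott periodicity'' statement. Arguing as in \cite[Section 3]{Yu1997} and \cite{Yu2000}, the $K$-theory of the twisted localization algebra is computed by a Mayer--Vietoris sequence together with an induction on the dimension of the simplicial complex, reducing to a $0$-dimensional (discrete) complex where the claim becomes: the fibrewise Bott map $\B\to\A(\H)\ox\B$ is a $K$-theory isomorphism. The latter is finite-dimensional Bott periodicity for each $\A(W_N(x))\ox\B$ together with the fact that the connecting maps in $\A(\H)=\varinjlim\A(V)$ are themselves Bott maps. The one point to check beyond \cite{Yu2000} is bookkeeping: all the $*$-homomorphisms and homotopies involved respect the coarse $X$-algebra structure, which holds precisely because $f$ is a coarse embedding, so propagation-bounded objects have uniformly support-bounded images and conversely. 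Granting this, the commuting square immediately yields injectivity of $\mu_{\Ga(X,\B)}$: if $ev_*(a)=0$ then $ev_*(\beta_{L,*}(a))=\beta_*(ev_*(a))=0$, so $\beta_{L,*}(a)=0$ since the bottom row is injective, hence $a=0$.

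For surjectivity I would invoke the Dirac (dual-Dirac) map. Using the Clifford/Dirac operator along $\H$ and, again, $f$ to keep everything propagation-controlled, one constructs as in \cite{Yu2000} an index-type homomorphism (or asymptotic morphism) $\alpha\colon C^*_{\Ga(X,\A(\H)\ox\B)}(X,\A(\H)\ox\B)\to C^*_{\Ga(X,\B)}(X,\B)\ox\K$ with $\alpha_*\circ\beta_*=\mathrm{id}$ on $K_*(C^*_{\Ga(X,\B)}(X,\B))$, so that $\beta_*$ is split injective. Then, given $b\in K_*(C^*_{\Ga(X,\B)}(X,\B))$, choose $c$ with $ev_*(c)=\beta_*(b)$ (bottom row surjective) and $a$ with $\beta_{L,*}(a)=c$; one computes $\beta_*(ev_*(a)-b)=ev_*(\beta_{L,*}(a))-\beta_*(b)=0$, hence $ev_*(a)=b$. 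This shows $\mu_{\Ga(X,\B)}$ is an isomorphism for every coarse $X$-algebra $\Ga(X,\B)$, i.e., the twisted coarse Baum--Connes conjecture with coefficients holds for $X$. I expect the main obstacle to be the local Bott-periodicity step together with the verification that Yu's Bott and Dirac morphisms are compatible with the coarse-algebra framework; the rest is formal diagram chasing and citation of \cite{Yu2000, Yu1997}.
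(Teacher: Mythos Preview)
Your proposal is correct in outline and follows essentially the same Dirac--dual-Dirac strategy as the paper: both use Theorem~\ref{thm: twisted CBC} for the coarsely proper algebra $\Ga(X,\A(\H)\ox\B)$ as the geometric input and then untwist via Yu's Bott and Dirac maps. The paper packages this as a single three-row commuting diagram and asserts that the composition $\alpha_*\circ\beta_*$ is an isomorphism on \emph{both} the localization side and the Roe side (citing \cite{Yu2000} for the argument), whereas you separate injectivity (via $\beta_{L,*}$ being an isomorphism on the left) from surjectivity (via $\alpha_*\beta_*=\mathrm{id}$ on the right); the resulting diagram chases are equivalent.

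One point in your sketch deserves correction. Your justification of the $0$-dimensional case of ``$\beta_{L,*}$ is an isomorphism'' --- finite-dimensional Bott periodicity for each $\A(W_N(x))\ox\B$ together with $\A(\H)=\varinjlim\A(V)$ --- does not work as stated: the coarse $X$-algebra sits inside an $\ell^{\infty}$-type product over $X$, and neither $K$-theory nor inductive limits commute with such products, so fiberwise Bott periodicity alone does not give the conclusion. What actually makes the $0$-dimensional step go through (already in \cite{Yu2000}) is the Dirac map $\alpha$ together with the rotation homotopies witnessing $\alpha\beta\sim\mathrm{id}$ and $\beta\alpha\sim\mathrm{id}$, applied fiberwise but uniformly in $x\in X$. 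Since you construct $\alpha$ anyway and flag this step as the main obstacle, the ingredient is available to you; it should simply replace the inductive-limit argument you sketched.
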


\begin{proof}
For any coefficient algebra $\Ga(X,\B)$, we define $\Ga( X,\A(H)\ox\K)$ to be the proper algebra as in Example \ref{exa: CE} and $\Ga( X,\A(H)\ox\B)$ as above. Then, we have the following commuting diagram
$$\begin{tikzcd}
K_{*+1}\left(C^*_{L,\Ga(X,\B)}(X,\B)\right) \arrow[r, "ev_*"] \arrow[d, "\beta"']  & K_{*+1}\left(C^*_{\Ga(X,\B)}(X,\B)\right) \arrow[d, "\beta"]    \\
K_*\left(C^*_{L,\Ga(X,\A(\H)\ox\B)}(X,\A(\H)\ox\B)\right) \arrow[r, "ev_*", "\cong"'] \arrow[d, "\alpha"'] & K_*\left(C^*_{\Ga(X,\A(\H)\ox\B)}(X,\A(\H)\ox\B)\right) \arrow[d, "\alpha"] \\
K_{*+1}\left(C^*_{L,\Ga(X,\B)}(X,\B)\right) \arrow[r, "ev_*"]                      & K_{*+1}\left(C^*_{\Ga(X,\B)}(X,\B)\right)                    
\end{tikzcd}$$
where $\beta$ is induced by the Bott map
$$\prod_{x\in X}\beta_x: \ell^{\infty}(X,\S\ox\K)\to\Ga(X,\A(\H)\ox\K)$$
and $\alpha$ is induced by the Dirac map
$$\prod_{x\in X}\alpha_x: \Ga(X,\A(\H)\ox\K)\to\ell^{\infty}(X,\S\ox\K),$$
following \cite[Section 7]{Yu2000}. By Theorem \ref{thm: twisted CBC}, the middle horizon map is an isomorphism and the compositions of the vertical maps on both sides are isomorphisms by using the same argument with \cite{Yu2000}. This finishes the proof.
\end{proof}

\section{Proof of the main theorem}

In this section, we will prove the following theorem.
\begin{Thm}\label{thm: main theorem}
Let $Z\hookrightarrow X\xrightarrow{p} Y$ be a coarse fibration with bounded geometry. If both $Z$ and $Y$ satisfy the twisted coarse Baum-Connes conjecture with any coefficients, then the twisted coarse Baum-Connes conjecture with any coefficients holds for $X$.
\end{Thm}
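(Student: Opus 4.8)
The plan is to transplant into the coarse setting the two-step ``fibre, then base'' proof of the Baum--Connes conjecture for group extensions, with the coarse imprimitivity theorem (Theorem~\ref{thm: coarse imprimitivity}) playing the role of the classical imprimitivity theorem. Fix an arbitrary coarse $X$-algebra $\Ga(X,\B)$; we must show that $\mu_{\Ga(X,\B)}$ is an isomorphism. \textbf{Step 1 (fibering over $Y$).} Since $p$ is bornologous it passes to Rips complexes: for each $d$ there are scales $d_1,d_2$ and a coarse $P_{d_1}(Z)$-fibration $P_{d_1}(Z)\hookrightarrow P_d(X)\xrightarrow{p}P_{d_2}(Y)$, and $\Ga(X,\B)$ extends to a coarse $P_d(X)$-algebra. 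Writing $X=\bigsqcup_{y\in Y}Z_y$, any operator of propagation $\le R$ on $\ell^2(\wh X,\B)$ is ``block-banded'' over $Y$: it is a matrix $(T_{y,y'})$ with $T_{y,y'}=0$ whenever $d_Y(y,y')$ exceeds the bornologous control of $R$. Transporting the nonzero blocks, which are supported on the preimages $p^{-1}(B_Y(y,S))$, to operators on $\ell^2(\wh Z,\B)$ through the uniform local trivialisations $\phi_{y,S}$ of Definition~\ref{def: coarse fibration structure}(3) --- this is where the mechanism of the imprimitivity theorem enters --- one identifies $C^*_{\Ga(X,\B)}(X,\B)$ with a twisted Roe algebra of $Y$ whose coefficient coarse $Y$-algebra $\D^{\mathrm{Roe}}$ is the ``bundle'' $y\mapsto C^*_{\Ga(Z_y,\B)}(Z_y,\B)$ of fibrewise twisted Roe algebras. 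Running the same argument with the additional localisation parameter, $C^*_{L,\Ga(X,\B)}(P_d(X),\B)$ is identified with a twisted localisation algebra of $Y$ with coefficients in a bundle $\D^{\mathrm{Loc}}$ of fibrewise twisted localisation algebras; here one arranges, by a suitable normalisation of the semi-spherical metrics, that small propagation on $P_d(X)$ forces small propagation both along the fibres and in the $Y$-direction, and one uses the standard comparison between the ``two time parameters'' and the ``diagonal'' models of a localisation algebra.

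\textbf{Step 2 (the fibrewise conjecture, from the hypothesis on $Z$).} By Definition~\ref{def: coarse fibration structure}(2) the fibres $\{Z_y\}_{y\in Y}$ are uniformly coarsely equivalent to $Z$, so, keeping all coarse constants uniform in $y$, the family of pairs $\{(Z_y,\Ga(Z_y,\B))\}_{y\in Y}$ repackages as a single coarse $Z$-algebra $\wt\Ga$ with stable fibre $\wt\B$, an $\ell^\infty$-type bundle of copies of $\B$ over $Y$ (one reduces to separable coefficients by writing $\wt\B$ as a direct limit of separable $C^*$-subalgebras, should the hypothesis for $Z$ only be available for separable coefficients). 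Under this repackaging the fibrewise twisted Roe algebra is $C^*_{\wt\Ga}(Z,\wt\B)$ and the fibrewise twisted localisation algebra is $C^*_{L,\wt\Ga}(P_{d_1}(Z),\wt\B)$, and the twisted coarse Baum--Connes conjecture for $Z$ with coefficients in $\wt\Ga$ --- an instance of the hypothesis ``with any coefficients'' --- says precisely that the fibrewise evaluation-at-zero map $ev^{\mathrm{fib}}\colon\D^{\mathrm{Loc}}\to\D^{\mathrm{Roe}}$ is an isomorphism on $K$-theory.

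\textbf{Step 3 (assembling over $Y$, from the hypothesis on $Y$).} Under the identifications of Step~1, and up to a homotopy of the kind mentioned there, $\mu_{\Ga(X,\B)}$ is the composite (suppressing the fibre algebras from the notation)
$$\lim_{d\to\infty}K_*\big(C^*_{L,\D^{\mathrm{Loc}}}(P_d(Y))\big)\xrightarrow{\ (ev^{\mathrm{fib}})_*\ }\lim_{d\to\infty}K_*\big(C^*_{L,\D^{\mathrm{Roe}}}(P_d(Y))\big)\xrightarrow{\ \mu^Y_{\D^{\mathrm{Roe}}}\ }K_*\big(C^*_{\D^{\mathrm{Roe}}}(Y)\big),$$
where the first arrow is induced by $ev^{\mathrm{fib}}$ from Step~2 and the second is the twisted assembly map for $Y$ with coefficients in the coarse $Y$-algebra $\D^{\mathrm{Roe}}$. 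The second arrow is an isomorphism by the twisted coarse Baum--Connes conjecture for $Y$ with any coefficients (the hypothesis on $Y$). For the first arrow one must propagate the $K$-theory isomorphism $ev^{\mathrm{fib}}\colon\D^{\mathrm{Loc}}\to\D^{\mathrm{Roe}}$ through the localisation algebra of $Y$: exactly as in the proof of Theorem~\ref{thm: twisted CBC}, strong Lipschitz homotopy invariance, a Mayer--Vietoris sequence, the five lemma, and induction over the skeleta of $Y$ reduce this to the $0$-dimensional case, which in turn follows from the presentation of these localisation algebras as inductive limits. Hence $\mu_{\Ga(X,\B)}$ is an isomorphism.

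\textbf{The main obstacle} is the uniformity bookkeeping in Steps~1 and~2: one must keep every constant --- propagation bounds, the controlling functions of the local trivialisations $\phi_{y,R}$, and the $Y$-bandwidths --- uniform in $y\in Y$ so that the fibrewise twisted Roe and localisation algebras assemble into honest coarse algebras ($\wt\Ga$ over $Z$, and $\D^{\mathrm{Roe}},\D^{\mathrm{Loc}}$ over $Y$) with stable fibres, and one must verify, via the skeletal Mayer--Vietoris induction of Theorem~\ref{thm: twisted CBC}, that a $K$-theory equivalence of coarse coefficient algebras over $Y$ survives the passage to the associated twisted localisation algebras. Once these identifications and this functoriality are in place, the conclusion is a formal diagram chase through the factorisation in Step~3.
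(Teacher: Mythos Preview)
Your proposal is correct and follows essentially the same ``fibre, then base'' strategy as the paper: factor the assembly map for $X$ through a twisted assembly map for $Y$ (handled by the hypothesis on $Y$) and a fibrewise assembly map (handled by the hypothesis on $Z$, after repackaging the uniform family $\{Z_y\}$ as a single coarse $Z$-algebra), with skeletal Mayer--Vietoris/five-lemma arguments to propagate the coefficient-level $K$-isomorphism through the localisation algebra of $Y$. Your $(ev^{\mathrm{fib}})_*$ is exactly the paper's map $i_Y$ in diagram~\eqref{eq: main diagram}, and your $\mu^Y_{\D^{\mathrm{Roe}}}$ is the bottom row of that diagram.

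The one noteworthy technical difference is that the paper does not work directly on $X$ but first passes (Lemma~\ref{lem: step 1 reduction to product}) to the genuine product $X\times Y$ with a special coarse coefficient algebra $\B$ supported on a tube around the graph of $p$ in $Y\times Y$; this buys a clean separation of the two propagation directions and lets the paper introduce the \emph{partial localisation algebra} $p_Y\text{-}C^*_{L,\B}(X\times P_{d'}(Y),\A)$ as an honest intermediary, rather than having to argue, as you do, that ``small propagation on $P_d(X)$ forces small propagation both along the fibres and in the $Y$-direction'' via normalisations of metrics. Your direct route is workable but that particular claim is exactly the delicate point, and the paper's product detour is the standard way to make it rigorous. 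The remaining steps (your Step~2 $=$ the paper's Step~4, your Mayer--Vietoris reduction in Step~3 $=$ the paper's Step~3) match.
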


Inspired by \cite{Oyono:BC-and-extensions}, we prove this theorem in four steps. First, we shall reduce the twisted coarse Baum-Connes conjecture with coefficients for $X$ to the twisted coarse Baum-Connes conjecture with coefficients for the twisted coarse Baum-Connes conjecture for $X\times Y$ with coefficients in a certain coarse algebra. The spirit of this step is essentially the same as the coarse imprimitivity theorem. Secondly, we shall view the twisted Roe algebra for $X\times Y$ as the twisted Roe algebra of $Y$ with coefficients in the twisted Roe algebra of $X$. Thus this question is reduced to comparing the $K$-theory of the twisted localization algebra of $P_d(X)\times P_{d'}(Y)$ and the twisted localization algebra of $P_d(Y)$ with twisted coefficients in certain twisted Roe algebra of $P_d(X)$. Since both sides are local, we can use the cutting-and-pasting technique to further reduce this question to the case of $1$-skeleton. Fortunately, the question is equivalent to the the twisted coarse Baum-Connes conjecture with coefficients for $Z$ with some certain coefficient when reducing to $1$-skeleton using the coarse imprimitivity theorem. Then the main result follows from the assumption that the twisted coarse Baum-Connes conjecture with coefficients holds for $Z$.

In the rest of this section, we shall carry out these four steps in details.

\noindent{\bf Step 1. Reduction of the question to the twisted coarse Baum-Connes conjecture for $X\times Y$ with certain coefficients.}

Equip $X\times Y$ with the $\ell^1$-product metric, i.e., $d((x,y),(x',y'))=d_X(x,x')+d_Y(y,y')$ for any $(x,y), (x',y')\in X \times Y$. Consider the map
$$p\times id: X\times Y\to Y\times Y,\quad (x,y)\mapsto(p(x),y).$$
The preimage of the diagonal $\{(y,y):y \in Y\}$ under the map $p\times id$ is given by
$$\bigsqcup_{y\in Y}Z_y\times\{y\}.$$
Equip $\bigsqcup_{y\in Y}Z_y\times\{y\}$ with the subspace metric of $X\times Y$. We define a map $g:X\to \bigsqcup_{y\in Y}Z_y\times\{y\}$ by $x\mapsto(x,p(x))\in Z_{p(x)}\times\{p(x)\}$. For any $x,x'\in X$, one has that
$$d_X(x,x')\leq d((x,p(x)),(x',p(x')))\leq d_X(x,x')+d_Y(p(x),p(x')).$$
Since $p$ is a bornologous map, there exists an upper controlling function $\rho_+:\IR_+\to\IR_+$ such that $d_Y(p(x),p(x'))\leq\rho_+(d_X(x,x'))$. This shows that $X$ is coarsely equivalent to $\bigsqcup_{y\in Y}Z_y\times\{y\}$. We then have the following pull-back diagram
\[\begin{tikzcd}
X\times Y \arrow[r, "p\times id"] & Y\times Y         \\
X \arrow[r, "p"] \arrow[u, "g"]  & Y .\arrow[u, hook]
\end{tikzcd}\]

Let $\Ga(X,\A)$ be a coarse $X$-algebra. For each $y\in Y$, the canonical inclusion $i_y: Z_y\to X$ induces a coarse $Z_y$-algebra by the pull-back 
$$\Ga(Z_y,\A)=i_y^*\left(\Ga(X,\A)\right)$$
Then $\Ga(X,\A)$ can be viewed as a subalgebra of $\prod_{y\in Y}\Ga(Z,\A)$. Denoted by $\Delta_{Y,R}\subseteq Y\times Y$ the $R$-diagonal and denoted by $X_R\subseteq X\times Y$ the preimage of $\Delta_{Y,R}$ under $p\times id$. 

For each $R\geq 0$, we define the $C^*$-subalgebra $\B_R\subseteq\ell^{\infty}(X\times Y, \A)$ consisting of all functions $f$ such that\begin{itemize}
\item[(1)] for any $y,z\in Y$, the function $Z_y \to \A$ by $x\mapsto f(x,z)$ for any $x\in Z_y$ is in $\Ga(Z_y,\A)$;
\item[(2)] $f(x,y)=0$ whenever $(x,y)\notin X_R$.
\end{itemize}
We would like to point out that the algebra $\B_R$ is not a coarse algebra. We define 
$$\B=\overline{\bigcup_{R\geq 0}\B_R}.$$ 
It is straightforward to check that $\B$ forms a coarse $(X\times Y)$-algebra since $X$ is a coarse $Z$-fibration.

\begin{Lem}\label{lem: step 1 reduction to product}
The twisted coarse Baum-Connes conjecture with coefficients for $X$ with coefficients in $\Ga(X,\A)$ is equivalent to the twisted coarse Baum-Connes conjecture with coefficients for $X\times Y$ with coefficients in $\B$.
\end{Lem}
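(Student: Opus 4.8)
The coefficient algebra $\B$ is designed so that twisting the Roe algebra of $X\times Y$ by $\B$ confines it to a bounded neighbourhood of the coarsely embedded copy
$$X_0:=\bigsqcup_{y\in Y}Z_y\times\{y\}=(p\times\mathrm{id})^{-1}(\Delta_{Y,0})$$
of $X$ (recall $X\cong X_0$ via the map $g$ from the pull-back diagram). Accordingly the plan is to exhibit, for each Rips complex, a \emph{full-corner} Morita equivalence between the $\B$-twisted Roe algebra of $P_e(X\times Y)$ and the $\Ga(X,\A)$-twisted Roe algebra of $P_e(X)$, together with the analogous equivalence for the twisted \emph{localization} algebras, all compatible with the structural inclusions $P_e\hookrightarrow P_{e'}$ and intertwining the evaluation-at-zero homomorphisms. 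Passing to the colimit over $e$ and invoking the five lemma then shows that $\mu_{\B}$ is an isomorphism if and only if $\mu_{\Ga(X,\A)}$ is. (For the colimits to agree one uses that, with the $\ell^1$-metric, $P_d(X)\times P_d(Y)\hookrightarrow P_{2d}(X\times Y)\hookrightarrow P_{2d}(X)\times P_{2d}(Y)$, so the colimit over the Rips complexes of $X\times Y$ coincides with the colimit over the products $P_d(X)\times P_{d'}(Y)$ used in the subsequent steps.)

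\textbf{The corner and its fullness.} Choose countable dense subsets with $\widehat{X_0}\subseteq\widehat{X\times Y}$ and let $P_{X_0}$ be the propagation-$0$ projection of $\ell^2(\widehat{X\times Y},\A)$ onto $\ell^2(\widehat{X_0},\A)$. Since every $f\in\B_R$ is supported in $X_R$ and $\chi_{X_0}$, viewed fibrewise, is just $\chi_{\{p(x)=z\}}$, one has $\chi_{X_0}f\in\B_0\subseteq\B$; hence $(P_{X_0}T)^{v}=\chi_{X_0}\cdot T^{v}\in\B$ for all $T\in\IC_{\B}[X\times Y,\A]$ and all partial translations $v$, so $P_{X_0}$ is a multiplier of $C^*_{\B}(X\times Y,\A)$. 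Because $g\colon X\to X_0$ is a \emph{bijective} coarse equivalence and $\A$ is stable, a covering-unitary argument as in the proof of Theorem~\ref{thm: coarse imprimitivity} (cf.\ \cite[Lemma~5.1.12]{HIT2020}) identifies the corner $P_{X_0}\,C^*_{\B}(X\times Y,\A)\,P_{X_0}$ with the twisted Roe algebra of $X$ carrying the coefficient algebra that $\B$ induces along $X_0$; a careful check—using condition~(3) of Definition~\ref{def: coarse fibration structure}, exactly as in the verification that $\Ga^{Z}_{ind}$ is a coarse algebra—shows this induced algebra is $\Ga(X,\A)$. For fullness one follows the pattern of the imprimitivity theorem: fix $T\in\IC_{\B}[X\times Y,\A]$ with $\prop(T)\le r$ and decompose the $r$-neighbourhood of the diagonal of $X\times Y$ into finitely many partial-translation graphs $E^{1},\dots,E^{N}$ (as in Example~\ref{exa: coefficient in c0(X)}); then the conditions $T^{v_i}\in\B$ force the matrix coefficients of $T$ to decay uniformly to $0$ away from $X_0$, so $T$ is a norm-limit of its compressions to the bounded neighbourhoods $X_R$, each of which is bijectively coarsely equivalent to $X_0$, and these compressions lie in the ideal generated by $P_{X_0}$ by a further covering-unitary argument. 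Thus the corner is full, \cite{BGR1977} gives the Morita equivalence, and the twisted localization algebras are handled identically (the propagation now tends to $0$, so the same local constructions apply), with the evaluation-at-zero maps intertwined by construction.

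\textbf{Main obstacle.} The delicate part is the corner step carried out simultaneously at both levels. On one hand the ``concentration near $X_0$'' must be made quantitative, so that the compressions to the neighbourhoods $X_R$ converge in \emph{norm} rather than merely strongly; this is precisely where bounded geometry of $Y$ and the near-diagonal support clause of $\B$ are used, mirroring the $C_0$-computation of Example~\ref{exa: coefficient in c0(X)}. On the other hand one must track the fibrewise $\Ga$-condition through the covering-unitary identification so that the corner is pinned down as $C^*_{\Ga(X,\A)}(X,\A)$ itself—on the nose, uniformly over all Rips complexes and compatibly with $ev$—rather than a larger fibrewise variant; this is the step that genuinely exploits condition~(3) of the coarse fibration structure.
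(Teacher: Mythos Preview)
Your Roe-algebra step is essentially the paper's own argument rephrased: the paper shows
$C^*_{\B}(X\times Y,\A)=\varinjlim_{R}C^*_{\Ga(X,\A)}(X_R,\A)$ using that any $T\in\IC_{\B}[X\times Y,\A]$ is (up to approximation) supported in $X_{R'}\times X_{R'}$, and then invokes the coarse equivalences $X\cong X_R$.  Your full-corner formulation is an equivalent way to say the same thing, and the fullness argument you sketch (compress to $X_R$ via covering isometries and let $R\to\infty$) is correct at this level.

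The genuine gap is in the sentence ``the twisted localization algebras are handled identically (the propagation now tends to $0$, so the same local constructions apply).''  This does not follow.  For $g\in \IC_{L,\B}[P_e(X\times Y),\A]$ one only knows $g(t)^v\in\B$ for each $t$; the near-diagonal support parameter $R$ for which $g(t)^v$ is $\varepsilon$-close to $\B_R$ may blow up with $t$, so there is no reason for $\sup_t\|g(t)-\chi_{X_R}g(t)\chi_{X_R}\|\to 0$.  Small propagation of $g(t)$ controls the \emph{metric} support, not the $\B_R$-support, so it does not help here.  Consequently the ideal generated by $P_{X_0}$ in $C^*_{L,\B}(P_e(X\times Y),\A)$ is in general a proper ideal, and your Morita equivalence at the localization level is not established.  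The paper confronts exactly this obstruction and does \emph{not} argue ``identically'': it passes to $K$-theory, uses homotopy invariance and a Mayer--Vietoris argument to reduce to the $0$-skeleton, and then appeals to quasi-stability (so that $K_*(C_{ub}(\IR_+,\B_R))\cong K_*(\B_R)$, $K_*(C_{ub}(\IR_+,\B))\cong K_*(\B)$) together with the continuity of $K$-theory under $\varinjlim_R\B_R\to\B$.  Without some argument of this kind your localization step is incomplete.

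A secondary point: your identification of the corner $P_{X_0}C^*_{\B}(X\times Y,\A)P_{X_0}$ with $C^*_{\Ga(X,\A)}(X,\A)$ \emph{on the nose} needs more than you indicate.  The defining conditions on $\B$ are purely fibrewise ($x\mapsto f(x,z)$ lies in $\Ga(Z_y,\A)$), so the corner a priori sees only the fibrewise data $\prod_y\Ga(Z_y,\A)$, into which $\Ga(X,\A)$ embeds but with which it need not coincide.  Invoking condition~(3) of Definition~\ref{def: coarse fibration structure} controls how partial translations on $X$ move between fibres, but it does not by itself force the corner to be exactly $C^*_{\Ga(X,\A)}(X,\A)$; this identification deserves a careful statement.
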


\begin{proof}
Denoted by $X_R$ the preimage of $\Delta_{Y,R}$ in $Y\times Y$ under $p\times id$ as above. We have proved that $X_0$ is coarsely equivalent to $X$. Similarly, we can also prove $X_R$ is coarsely equivalent $X$ for any $R>0$. Indeed, one can check that
$$X_R=\bigsqcup_{y\in Y}p^{-1}(B_Y(y,R))\times\{y\}.$$
By the Condition (3) in Definition \ref{def: coarse fibration structure}, there exists a family uniform coarse equivalence
$$\{\phi_{y,R}: p^{-1}(B_Y(y,R))\to Z\}_{y\in Y}.$$
Choose a family of uniform coarse inverse map $\{\psi_y: Z\to Z_y\}_{y\in Y}$ to $\{\phi_y\}$, where $\phi_y$ is given as in Definition \ref{def: coarse fibration structure}. Then we conclude that the map $\varphi_R: X_R\to X_0$ which restricts on $p^{-1}(B_Y(y,R))\times\{y\}$ is equal to $\psi_y\circ \phi_{y,R}$ is a coarse equivalent and the canonical inclusion from $i_R: X_0\to X_R$ is the coarse inverse of this map. Then by coarse invariance of Roe algebras,
$$(i_R)_*: C^*_{\Ga(X,\A)}(X,\A)\to C^*_{\Ga(X,\A)}(X_R,\A)$$
is a $C^*$-isomorphism.

Let $\IC_{\B}[X\times Y,\A]$ be the algebraic twisted Roe algebra with coefficients in $\B$ of $X\times Y$. For any element $T\in \IC_{\B}[X\times Y,\A]$, and any partial translation $v$ on $X\times Y$, we have that $T^v\in B$. For brevity, we assume that for any partial translation $u$ on $X\times Y$, $T^u\in B_{R'}$ for some $R'>0$ depending on the propagation of $T$. Then the support of $T$ is contained in $X_{R'}\times X_{R'}$. Composing with the coarse equivalence $i_{R'}:X \to X_{R'}$, we can view each $T^u$ as an element in $\Ga(X,\A)$ for every partial isometry $u$ on $X_{R'}$.
Therefore, $T$ can be viewed as an element in $\IC_{\Ga(X,\A)}[X_{R'},\A]$. To sum up, we have that
$$C^*_{\B}(X\times Y,\A)=\lim_{R\to\infty}C^*_{\Ga(X,\A)}(X_R,\A).$$
By the continuity of the $K$-theory, we conclude that 
$$K_*(C^*_{\B}(X\times Y,\A))\cong\lim_{R\to\infty}K_*(C^*_{\Ga(X,\A)}(X_R,\A)).$$
As a result, we have that 
$$K_*(C^*_{\B}(X\times Y,\A))\cong K_*(C^*_{\Ga(X,\A)}(X,\A)).$$ 

For localization algebras, it remains to show the canonical inclusion
$$I: \lim_{R\to\infty}C_{L,\Ga(X,\A)}^*(P_d(X_R),\A)\to C^*_{L,\B}(P_d(X\times Y),\A).$$
induces an isomorphism on $K$-theory. The proof is similar to Theorem \ref{thm: twisted CBC}. Since the $K$-theory of both sides are homotopy invariant and admit Mayer-Vietoris sequence. It suffices to prove that the map on K-theory induced by the inclusion is an isomorphism for the case when $X$ is a $0$-dimensional space. For the $0$-dimensional case, the inclusion map between the localization algebras is equivalent to the map
$$I: \lim_{R\to\infty}C_{ub}(\IR_+,\B_R)\to C_{ub}(\IR_+,\B),$$
on the level of $K$-theory. Since $\B_R$ and $\B$ are both quasi-stable, by \cite[Lemma 12.4.3]{HIT2020}, we know that 
$$K_*(C_{ub}(\IR_+,\B_R)) \cong K_*(\B_R),~\mbox{and}~K_*(C_{ub}(\IR_+,\B))\cong K_*(\B).$$
By continuity of $K$-theory, we have that
$$\lim_{R\to\infty}K_*(\B_R) \cong  K_*(\B).$$
This finishes the proof. 
\end{proof}

\noindent{\bf Step 2. Partial localization algebras.}

By Lemma \ref{lem: step 1 reduction to product}, it suffices to prove the coarse Baum-Connes conjecture with coefficient $\B$ holds for $X\times Y$. We shall begin with the definition of a partial version of localization algebras.

Let $M\times N$ be the $\ell^1$-product of $M$ and $N$, $\D$ a coarse $M\times N$-algebra with stable fiber $\A$. Denoted by $\H_{M\times N,\A}=\ell^2(\wh M\times \wh N,\A)$ the geometric $M\times N$-$\A$-module. There are two canonical representations of $C_0(M)$ and $C_0(N)$ on this module $\H_{M\times N,\A}$ since
$$\ell^2(\wh M\times \wh N,\A)\cong\ell^2(\wh M)\ox\ell^2(\wh N,\A)\cong\ell^2(\wh M,\A)\ox\ell^2(\wh N).$$
Thus, for an operator $T\in\cL(\H_{M\times N,\A})$, we can define $\prop_M(T)$ and $\prop_N(T)$ the propagation of $T$ associated with the $C_0(M)$-representation and $C_0(N)$-representation, respectively.

\begin{Def}[Partial localization algebra]
Let $M\times N$ be the $\ell^1$-product of $M$ and $N$, $\D$ a coarse $M\times N$-algebra. The \emph{algebraic partial localization algebra} of $M\times N$ associated with $N$, denoted by $p_N\text{-}\IC_{L,\D}[M\times N,\A]$, is the algebra of all bounded uniform continuous functions
$$g:\IR_+\to\IC_{\D}[M\times N,\A]$$
such that $\prop_N(g(t))$ tends to $0$ as $t$ tends to infinity.
\end{Def}

Since this algebra is local only on the $N$-direction, the homotopy invariance and Mayer-Vietoris sequence only hold for the $N$-direction. We should mention this partial version of the localization algebra is inspired by the $\pi$-localization algebra introduced in \cite{FWY2020}.

For any $d\geq 0$, there is a canonical continuous inclusion
$$P_{d}(X\times Y)\to P_d(X)\times P_d(Y)$$
defined by
$$\sum_{(x,y)\in X\times Y}a_{(x,y)}[(x,y)]\mapsto \left(\sum_{x\in X}\left(\sum_{y\in Y}a_{(x,y)}\right)[x],\sum_{y\in Y}\left(\sum_{x\in X}a_{(x,y)}\right)[y]\right).$$
On the other hand, $P_d(X)$ and $P_{d'}(Y)$ are respectively coarsely equivalent to $X$ and $Y$. By \cite[Lemma 7.2.14]{HIT2020}, there must exist a continuous coarse equivalent
\begin{equation}\label{eq: classifying space for product}P_d(X)\times P_{d'}(Y)\to P_n(X\times Y)\end{equation}
for sufficiently large $n>0$ which restricts to an identity maps from $X\times Y\subseteq P_d(X)\times P_{d'}(Y)$ to $X\times Y\subseteq P_n(X\times Y)$. Moreover, by using \cite[Lemma 7.2.13]{HIT2020}, the composition
$$P_d(X\times Y)\to P_d(X)\times P_d(Y)\to P_n(X\times Y)$$
is homotopy equivalent to the canonical inclusion map
$$P_d(X\times Y)\to P_n(X\times Y).$$
Moreover, the construction of the map \eqref{eq: classifying space for product} is based on a partition of unity of $P_d(X)\times P_{d'}(Y)$. Repeat the argument as in \cite[Lemma 7.2.13]{HIT2020}, one can also prove that the composition
$$P_d(X)\times P_{d'}(Y)\to P_n(X\times Y)\to P_n(X)\times P_n(Y)$$
is also homotopy equivalent to the canonical inclusion
$$P_d(X)\times P_{d'}(Y)\to P_n(X)\times P_n(Y).$$
By passing the parameters to infinity, we conclude the following result.

\begin{Lem}
The map \eqref{eq: classifying space for product} induces an isomorphism on the inductive limit:
$$\lim_{d,d'\to\infty}K_*(C^*_{L,\B}(P_d(X)\times P_{d'}(Y),\A))\to\lim_{n\to\infty}K_*(C^*_{L,\B}(P_n(X\times Y),\A)).\hfill\qed$$
\end{Lem}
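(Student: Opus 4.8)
With the two homotopy statements established just above in hand, the plan is to run the standard ``change of Rips complex'' argument in the twisted localization setting, exactly as in the proof of Theorem \ref{thm: twisted CBC}. I would show that the inductive systems
$$\bigl\{K_*(C^*_{L,\B}(P_d(X)\times P_{d'}(Y),\A))\bigr\}_{d,d'}\quad\text{and}\quad\bigl\{K_*(C^*_{L,\B}(P_n(X\times Y),\A))\bigr\}_{n}$$
are mutually cofinal: the continuous coarse equivalence \eqref{eq: classifying space for product} sends the first into the second, while the canonical continuous inclusion $P_d(X\times Y)\to P_d(X)\times P_d(Y)$ sends the second (after re-indexing $n\mapsto(n,n)$) back into the first.

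The first point to check is that both of these continuous maps induce $*$-homomorphisms between the corresponding twisted localization algebras $C^*_{L,\B}(\,\cdot\,,\A)$. Both maps restrict to the identity on the common vertex set $X\times Y$, on which the coefficient algebra $\B$ is supported; choosing covering isometries compatible with the decomposition of $\H_{X\times Y,\A}$, continuity ensures that the condition $\prop(g(t))\to 0$ is preserved, and the identity on $X\times Y$ ensures that the twisting condition ``$T^v\in\B$ for every partial translation $v$'' is preserved. This is the twisted analogue of the functoriality of localization algebras under continuous coarse maps (cf.\ \cite{Yu1997}), and it holds here for the same reasons it did in Theorem \ref{thm: twisted CBC}.

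Next I would combine this with strongly Lipschitz homotopy invariance of the $K$-theory of twisted localization algebras, established by the same argument as in the proof of Theorem \ref{thm: twisted CBC}, following \cite[Proposition 3.7]{Yu1997}. The composite $P_d(X\times Y)\to P_d(X)\times P_d(Y)\to P_n(X\times Y)$ is strongly Lipschitz homotopic to the canonical inclusion $P_d(X\times Y)\to P_n(X\times Y)$, so the maps it induces on $K$-theory coincide with the structural maps of the system $\{K_*(C^*_{L,\B}(P_n(X\times Y),\A))\}_n$; dually, the composite $P_d(X)\times P_{d'}(Y)\to P_n(X\times Y)\to P_n(X)\times P_n(Y)$ being homotopic to the canonical inclusion $P_d(X)\times P_{d'}(Y)\to P_n(X)\times P_n(Y)$ identifies the other composite on $K$-theory with the structural maps of the system $\{K_*(C^*_{L,\B}(P_d(X)\times P_{d'}(Y),\A))\}_{d,d'}$. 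Passing to inductive limits, the map induced by \eqref{eq: classifying space for product} and the map induced by the inclusions $P_d(X\times Y)\to P_d(X)\times P_d(Y)$ become mutually inverse, so \eqref{eq: classifying space for product} induces an isomorphism on the limits.

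The step I expect to require the most care is verifying that the covering-isometry homotopies of \cite[Proposition 3.7]{Yu1997} can be chosen to respect the twisting by $\B$, i.e.\ that along such a homotopy every operator $T^v$ (for $v$ a partial translation on $X\times Y$) stays in $\B_R$ with $R$ controlled independently of the homotopy parameter. Since $\B$ is by construction a coarse $(X\times Y)$-algebra and every map and homotopy in sight is the identity on $X\times Y$, this reduces to the verification already carried out for Theorem \ref{thm: twisted CBC}, but it should be written out explicitly; a minor additional bookkeeping point is to arrange that $d\mapsto n(d)$, produced via \cite[Lemma 7.2.14]{HIT2020}, can be taken non-decreasing so that the cofinality passes cleanly to the colimit.
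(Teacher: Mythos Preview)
Your proposal is correct and follows essentially the same approach as the paper: the paper regards the lemma as an immediate consequence of the two homotopy statements established in the paragraph preceding it (hence the \qed), and your cofinality-plus-homotopy-invariance argument is exactly the unpacking of that passage. Your additional care about functoriality and twisting along the homotopies is well placed but, as you note, reduces to the verifications already carried out for Theorem \ref{thm: twisted CBC}.
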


We then have the following commuting diagram:
\begin{equation}\label{eq: main diagram}\begin{tikzcd}
\lim\limits_{d,d'\to\infty}K_*(C^*_{L,\B}(P_d(X)\times P_{d'}(Y),\A)) \arrow[r, "ev_*"] \arrow[d, "i_Y"'] & K_*(C^*_{\B}(X\times Y,\A)) \arrow[d, "="] \\
\lim\limits_{d'\to\infty}K_*(p_Y\text{-}C^*_{L,\B}(X\times P_{d'}(Y),\A)) \arrow[r, "ev_*"]                  & K_*(C^*_{\B}(X\times Y,\A))               
\end{tikzcd}\end{equation}
where the horizontal maps are given by the twisted assembly map induced by the evaluation map, and the left vertical map is induced by the canonical inclusion. Here is a short explanation for the map $i_Y$. Notice that $P_d(X)\times P_{d'}(Y)$ is coarsely equivalent to $X\times P_{d'(Y)}$. By coarse invariant, the twisted algebra $C^*_{\B}(P_d(X)\times P_{d'}(Y),\A)$ is isomorphic to $C^*_{\B}(X\times P_{d'}(Y),\A)$ by $Ad_U$ for each $d,d'\geq 0$, where $U$ is a unitary between the geometric module. Since partial localization algebra only shrunk the propagation in the direction of $P_{d'}(Y)$, we can use $C^*_{\B}(X\times P_{d'}(Y),\A)$ to define the partial localization algebra. Thus $i_Y$ is actually induced by the composition
\begin{equation}\label{eq: definition of iY}\begin{tikzcd}
C^*_{L,\B}(P_d(X)\times P_{d'}(Y),\A) \arrow[r, "\text{inclusion}", hook] & p_Y\text{-}C^*_{L,\B}(P_d(X)\times P_{d'}(Y),\A) \arrow[r, "Ad_U"] & p_Y\text{-}C^*_{L,\B}(X\times P_{d'}(Y),\A).
\end{tikzcd}\end{equation}

\begin{Lem}\label{lem: step 2 reduction to iY}
The bottom horizontal map $ev_*$ in \eqref{eq: main diagram} is an isomorphism.
\end{Lem}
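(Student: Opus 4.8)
The plan is to recognize the bottom horizontal $ev_*$ in \eqref{eq: main diagram} as the twisted coarse assembly map for the base space $Y$ with coefficients in a suitable coarse $Y$-algebra, and then to invoke the hypothesis that $Y$ satisfies the twisted coarse Baum-Connes conjecture with arbitrary coefficients. The mechanism is that, since there is no Rips complexification in the $X$-direction and the partial localization shrinks propagation only along $Y$, all of the $X$-direction data assembles into a single fixed stable $C^*$-algebra which plays the role of a coefficient algebra over $Y$.

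First I would carry out this identification at the level of twisted Roe algebras. Using $\H_{X\times Y,\A}=\ell^2(\wh X\times\wh Y,\A)\cong\ell^2(\wh Y,\ell^2(\wh X,\A))$, an operator $T\in\IC_{\B}[X\times Y,\A]$ is a $\wh Y\times\wh Y$ kernel $(y,z)\mapsto T_{y,z}$ with values in $\cL(\ell^2(\wh X,\A))$; finite propagation of $T$ in the $\ell^1$-product metric forces $\prop_Y(T)<\infty$ and gives each $T_{y,z}$ finite $X$-propagation, while local compactness together with the twisting conditions defining $\B$ force each $T_{y,z}$ into the twisted Roe algebra $\mathfrak E:=C^*_{\Ga_{ind}^Z(X,\A)}(X,\A)$ of $X$ with coefficients in the induced coarse $X$-algebra $\Ga_{ind}^Z(X,\A)=\bigoplus_{y\in Y}\Ga(Z_y,\A)$; indeed, testing $T^v$ against partial translations of the form $v(x,y')=(u(x),w(y'))$ on $X\times Y$ shows precisely that $(T_{y,z})^u\in\Ga_{ind}^Z(X,\A)$ for every partial translation $u$ on $X$, and the support condition of $\B_R$ forces $T_{y,z}$ to be supported, in the $X$-variable, inside $p^{-1}(B_Y(z,R))$. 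This is exactly the data of a twisted Roe algebra of $Y$ with coefficients in the coarse $Y$-algebra $\mathfrak D:=\overline{\bigcup_{R\geq 0}\mathfrak D_R}$, where $\mathfrak D_R\subseteq\ell^{\infty}(Y,\mathfrak E)$ consists of the functions $\xi$ with $\xi(z)$ supported in $p^{-1}(B_Y(z,R))$ in the $X$-variable. Since $\A$ is stable, $\mathfrak E$ is stable, and one checks, using that $p$ is bornologous and surjective and that $Y$ has bounded geometry, that $\mathfrak D$ is a genuine coarse $Y$-algebra with fiber $\mathfrak E$. This produces a natural $C^*$-isomorphism $C^*_{\B}(X\times Y,\A)\cong C^*_{\mathfrak D}(Y,\mathfrak E)$.

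The same bookkeeping applied to the partial localization algebra gives $p_Y\text{-}C^*_{L,\B}(X\times P_{d'}(Y),\A)\cong C^*_{L,\mathfrak D}(P_{d'}(Y),\mathfrak E)$, because the requirement $\prop_Y(g(t))\to 0$ as $t\to\infty$ is precisely the defining condition of the twisted localization algebra of $P_{d'}(Y)$, while the unconstrained $X$-direction is absorbed into the fixed coefficient $\mathfrak E$. Under these identifications, passing to the inductive limit over $d'$ and using that $P_{d'}(Y)$ is coarsely equivalent to $Y$, the bottom $ev_*$ of \eqref{eq: main diagram} becomes the twisted assembly map
$$\mu_{\mathfrak D}:\lim_{d'\to\infty}K_*\bigl(C^*_{L,\mathfrak D}(P_{d'}(Y),\mathfrak E)\bigr)\longrightarrow K_*\bigl(C^*_{\mathfrak D}(Y,\mathfrak E)\bigr).$$
By hypothesis $Y$ satisfies the twisted coarse Baum-Connes conjecture with coefficients in any coarse $Y$-algebra, so $\mu_{\mathfrak D}$ is an isomorphism, which is the assertion.

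I expect the main obstacle to be making the two displayed isomorphisms precise. One must verify that $\mathfrak D$ really is a coarse $Y$-algebra with a single stable fiber $\mathfrak E$ (density of the fibers, stability under partial translations of $Y$), that the local compactness and finite-propagation conditions on $X\times Y$ translate exactly into those for $Y$ with coefficients in $\mathfrak E$ after the identification $\ell^2(\wh Y,\ell^2(\wh X,\A))\cong\ell^2(\wh Y,\mathfrak E)$ (which uses stability of $\mathfrak E$), and that the whole construction is compatible with the evaluation maps and with the coarse equivalence $P_{d'}(Y)\sim Y$, so that the square \eqref{eq: main diagram} is respected. Should the two families of localization algebras agree only up to $K$-theory rather than literally, this last point can be handled by the now-standard cutting-and-pasting and Mayer-Vietoris induction on the dimension of $P_{d'}(Y)$, as in the proofs of Theorem \ref{thm: twisted CBC} and Lemma \ref{lem: step 1 reduction to product}, reducing to the $0$-dimensional case.
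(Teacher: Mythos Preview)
Your strategy coincides with the paper's: reinterpret the bottom $ev_*$ as a twisted assembly map for $Y$ with coefficients built out of a twisted Roe algebra of $X$, and then invoke the hypothesis on $Y$. However, two of your intermediate claims are stronger than what actually holds, and the paper is careful about both.

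First, $\mathfrak E=C^*_{\Ga_{ind}^Z(X,\A)}(X,\A)$ is only \emph{quasi}-stable when $\A$ is stable, not stable. Hence $\mathfrak D$ as you defined it is not a coarse $Y$-algebra with stable fiber, and you cannot literally plug it into the twisted CBC hypothesis for $Y$. The paper fixes this by taking $C^*_{\B_0}(X,\A)\ox\K$ as the fiber (here $\B_0=\bigoplus_{y'}\Ga(Z_{y'},\A)=\Ga_{ind}^Z(X,\A)$, so this is your $\mathfrak E\ox\K$) and building the coarse $Y$-algebra $\Theta(Y,C^*_{\B_0}(X,\A)\ox\K)$ with explicit \emph{uniform} finite-propagation and finite-rank constraints, in the spirit of Example~\ref{exa: uniform algebra}.

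Second, the correspondence $\IC_{\B}[X\times Y,\A]\to\IC_{\Theta(Y,C^*_{\B_0}(X,\A)\ox\K)}[Y,C^*_{\B_0}(X,\A)\ox\K]$ is \emph{not} a $C^*$-isomorphism in general; the paper says so explicitly and obtains only a Morita equivalence, via the uniform-rank condition and the argument of \cite[Proposition~4.7]{SpaWil2013}. On the localization side the identification likewise holds only in $K$-theory, and the paper carries out exactly the Mayer--Vietoris reduction to the $0$-skeleton of $P_{d'}(Y)$ that you anticipate in your final paragraph. With these two corrections, your argument becomes the paper's.
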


\begin{proof}
We shall prove that the map $ev_*$ is essentially a twisted coarse assembly map of $Y$ and this lemma holds as the coarse Baum-Connes conjecture with coefficient holds for $Y$.

For each $y\in Y$, define $\rho_y: Y\to Y\times Y$ by $\rho_y(y')\mapsto (y',y)$. Since $\B$ is a set of functions from $Y\times Y$ to $\prod_{y\in Y}\Theta(Z_y,\A)$. Thus we can define
$$\B_y=\rho_y^*(\B)$$
to be the pull-back to $\B$ under the map $\rho_y$, which is an algebra of functions from $Y$ to $\prod_{y\in Y}\Theta(Z_y,\A)$. Recall the definition of $\B$, one can see that the value of a function in $\B_y$ at $y'$ is taken in $\Theta(Z_{y'},\A)$ and the function will vanish as $y'$ tends to infinity. Thus
\begin{equation}\label{eq: decomposition of By}
\B_y\cong\lim_{S\to\infty}\bigoplus_{y'\in B(y,S)}\Ga(Z_{y'},\A)\cong \bigoplus_{y'\in Y}\Ga(Z_{y'},\A).
\end{equation}
Thus the family $\{\B_y\}$ is actually isomorphic to each other and an element in $\B_y$ can be seen as a function from $\bigsqcup_{y\in Y}Z_y\to \A$, which naturally gives a coarse $X$-algebra structure. For simplicity, we shall denoted
\begin{equation}\label{eq: definition of B0}
\B_0=\bigoplus_{y'\in Y}\Ga(Z_{y'},\A)\quad\text{ and }\quad\B_{y,S}=\bigoplus_{y'\in B(y,S)}\Ga(Z_{y'},\A).
\end{equation}

Define $C^*_{\B_y}(X,\A)$ to be the twisted Roe algebra associated with the coefficient algebra $\B_y$. Define
$$\Theta(Y,C^*_{\B_0}(X,\A)\ox\K)\subseteq\prod_{y\in Y}C^*_{\B_y}(X,\A)\ox\K$$
consists of all functions $\xi:Y\to \prod C^*_{\B_y}(X,\A)\ox\K$ such that\begin{itemize}
\item[(1)] there exists $N>0$ such that $\xi(y)=\sum_{i=1}^NS_{y,i}\ox K_{y,i}$ where $S_{y,i}\in \IC_{\B_y}[X,\A]$ and $K_{y,i}\in\K$;
\item[(2)] there exists $R>0$ and $S>0$ such that $S_{y,i}\in\IC_{\B_{y,S}}[X,\A]_R$ for any $y\in Y$ and $i$, which means that $\prop(S_{y,i})\leq R$ and for any partial translation $v$,
$$S_{y,i}^{v}\in\B_{y,S};$$
\item[(3)] there exists $M>0$ such that $rank(K_{y,i})\leq M$ for any $y\in Y$ and $i$.
\end{itemize}
It is direct to check that $\Theta(Y,C^*(X,\B_0)\ox\K)$ is an algebraic coarse $Y$-algebra.

For any $T\in\IC_{\B}[X\times Y,\A]$ and fixed $y_1,y_2\in Y$, there is a function from $T_{y_1,y_2}: X\times X\to \A$ defined by
$$(x_1,x_2)\mapsto T_{(x_1,y_1),(x_2,y_2)}\in\A.$$
By definition, $T_{(x_1,y_1),(x_2,y_2)}$ is the $((x_1,y_1),(x_2,y_2))$-entry of $T$ viewed as a compact adjointable operator from $\A$ to $\A$. Since $\A$ is assumed to be stable, $T_{(x_1,y_1),(x_2,y_2)}$ determines an element in $\A$. We shall view this function $T_{y_1,y_2}$ as an operator on $\H_{X,\A}=\ell^2(X,\A)$ and the action is given by convolution. In this case, we obtain a family of operators $(T_{y_1,y_2})_{y_1,y_2\in Y}$. One should notice that there are two important facts associated with this family:\begin{itemize}
\item[(1)] the propagation $\prop(T_{y_1,y_2})$ as an operator of $\H_{X,\A}$ is uniformly bounded, i.e., there exists $R>0$ such that $\prop(T_{y_1,y_2})\leq R$ for any $y_1,y_2\in Y$.
\item[(2)] there exists $S>0$ such that
$$T_{y_1,y_2}\in\IC_{\B_{y_1,S}}[X,\A]_R$$
for any $y_1,y_2\in Y$.
\end{itemize}
The first one is because the propagation of $T$ as an operator on $\H_{X\times Y,\A}$ is bounded and the second one is given by the twisted condition. Since $\A$ is stable, we conclude that the Roe algebra $C^*_{\B_y}(X,\A)$ is quasi-stable. Thus, we can identify $C^*_{\B_y}(X,\A)$ with $C^*_{\B_y}(X,\A)\ox p_0$ as a subalgebra of $C^*_{\B_y}(X,\A)\ox\K$, where $p_0$ is a fixed rank $1$ projection. In this case, the map
$$(y_1,y_2)\mapsto T_{y_1,y_2}$$
defines an operator in $\IC_{\Theta(Y,C^*(X,\B_0)\ox\K)}[Y,C^*_{\B_0}(X,\A)\ox\K)]$. This correspondence gives a map
$$\IC[X\times Y,\B]\to\IC_{\Theta(Y,C^*_{\B_0}(X,\A)\ox\K)}[Y,C^*_{\B_0}(X,\A)\ox\K)].$$
This map is not an isomorphism in general, however, by a similar argument as in \cite[Proposition 4.7]{SpaWil2013}, one can show that these two algebras are Morita equivalent to each other by using the uniform rank condition of $\Theta(Y, C^*_{\B_0}(X,\A)\ox\K)$. Thus they have the same $K$-theory. For the localization algebra, one can also check in a similar way that there is a corresponding map
$$p_Y\text{-}C_L^*(X\times P_d(Y),\B)\to C^*_{L,\Ga(Y,C^*_{\B_0}(X,\A)\ox\K)}(P_d(Y),C^*_{\B_0}(X,\A)\ox\K).$$
We shall now show that this map induces an isomorphism on the level of $K$-theory.
Since the $K$-theory of localization algebras have homotopy invariance and Mayer-Vietoris sequence (for partial localization algebra, it has these two properties in the direction of the localized part), it suffices to prove it for the $1$-skeleton of $P_d(Y)$. Thus we have the following diagram
\begin{equation}\label{eq: partial and coefficient}\begin{tikzcd}
p_Y\text{-}C_{L,\B}^*(X\times Y,\A) \arrow[d, "\cong"'] \arrow[r] & C^*_{L,\Ga(Y,C^*_{\B_0}(X,\A)\ox\K)}(P_d(Y),C^*_{\B_0}(X,\A)\ox\K) \arrow[d, "\cong"] \\
C_{ub}(\IR_+,\prod^u_{y\in Y}C^*_{\B_y}(X,\A)) \arrow[r]                     & C_{ub}(\IR_+,\Ga(Y,C^*_{\B_0}(X,\A)\ox\K))                   
\end{tikzcd}\end{equation}
where the symbol $u$ in $\prod^u_{y\in Y}$ means it is a subalgebra of the direct product algebra consists all sequences that can be approximated by sequence with uniformly finite propagation. On the other hand, the right-bottom corner of \eqref{eq: partial and coefficient} is isomorphic to
$$C_{ub}(\IR_+,\Ga(Y,C^*_{\B_0}(X,\A)\ox\K))=\lim_{N\to\infty}C_{ub}\left(\IR_+,\prod^u_{y\in Y}C^*_{\B_y}(X,\A)\right)\ox M_N(\IC).$$
By the stability and continuity of $K$-theory, we conclude that the bottom map in \eqref{eq: partial and coefficient} induces an isomorphism on $K$-theory.

From the above argument, we have shown that the bottom map in \eqref{eq: main diagram} is equivalent to a twisted assembly map for $Y$, by assumption, it is an isomorphism.
\end{proof}

\noindent{\bf Step 3. The cutting-and-pasting technique on localization algebras.}

By Lemma \ref{lem: step 1 reduction to product} and Lemma \ref{lem: step 2 reduction to iY}, to show Theorem \ref{thm: main theorem}, it suffices to show $i_Y$ in \eqref{eq: main diagram} is an isomorphism. In this section, we shall further reduce it to the coarse Baum-Connes conjecture for $X$ with a special coefficient.

\begin{Def}
For each $d\geq 0$, define $A^*(P_d(X))$ be the subalgebra of
$$\prod_{y\in Y} C^*_{\B_y}(P_d(X),\A)$$
consists of all elements $(T_y)_{y\in Y}$ such that there exists $R>0$ and $S>0$ such that
$$T_{y}\in\IC_{\B_{y,S}}[P_d(X)]_{R}$$
for any $y\in Y$, where $\B_{y, S}$ is defined in the previous step, we shall define the propagation of $(T_y)_{y\in Y}$ to be
$$\prop((T_y)_{y\in Y})=\sup_{y\in Y}\prop(T_y).$$
Moreover, we define $A^*_L(P_d(X))$ to be the algebra of all bounded uniformly continuous functions $g$ from $\IR_+$ to $A^*(P_d(X))$ such that
$$\prop(g(t))\to 0\quad\text{as}\quad t\to\infty.$$
\end{Def}

Notice that $A^*(P_d(X))$ are isomorphic to each other for different $d$ since the family $(P_d(X))_{d\geq 0}$ is coarse equivalent to each other. When $d=0$, $A^*(P_0(X))=A^*(X)$. There is a canonical assembly map induced by the evaluation map as follows

\begin{equation}\label{eq: ev map for A}ev_*:\lim_{d\to\infty}A^*_L(P_d(X))\to A^*(X).\end{equation}

\begin{Lem}\label{lem: step 3 cutting and pasting}
To show $i_Y$ in \eqref{eq: main diagram} is an isomorphism, it suffices to show $ev_*$ as in \eqref{eq: ev map for A} is an isomorphism.
\end{Lem}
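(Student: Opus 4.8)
The plan is to reinterpret both the source and target of $i_Y$ in \eqref{eq: main diagram} as $K$-theory groups attached to the algebras $A^*(P_d(X))$ and $A^*_L(P_d(X))$, and then observe that $i_Y$ becomes (up to these identifications) exactly the assembly map $ev_*$ of \eqref{eq: ev map for A}. The first step is to unwind the right-hand object: by the construction in Step 2, the twisted Roe algebra $C^*_{\B}(X\times Y,\A)$ was shown to be Morita equivalent to the coarse $Y$-algebra-coefficient Roe algebra $C^*_{\Ga(Y,C^*_{\B_0}(X,\A)\ox\K)}(Y,\ldots)$, and the key point is that the coefficient algebra $C^*_{\B_0}(X,\A)\ox\K$ is built precisely out of the fiberwise algebras $C^*_{\B_y}(X,\A)$ with the uniform propagation/rank bookkeeping that also defines $A^*(X)$. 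Tracing through the identifications \eqref{eq: decomposition of By}, \eqref{eq: definition of B0} and the Morita equivalence from the proof of Lemma \ref{lem: step 2 reduction to iY}, one gets a natural isomorphism $K_*(C^*_{\B}(X\times Y,\A))\cong K_*(A^*(X))$, compatible with the coarse-equivalence identifications $A^*(P_d(X))\cong A^*(X)$.

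Next I would do the analogous unwinding on the localization side. The left vertical object $\lim_{d'\to\infty}K_*(p_Y\text{-}C^*_{L,\B}(X\times P_{d'}(Y),\A))$ was, in the proof of Lemma \ref{lem: step 2 reduction to iY}, identified after passing to $K$-theory with $\lim_{d'\to\infty}K_*(C^*_{L,\Ga(Y,C^*_{\B_0}(X,\A)\ox\K)}(P_{d'}(Y),\ldots))$, and the source of $i_Y$, namely $\lim_{d,d'\to\infty}K_*(C^*_{L,\B}(P_d(X)\times P_{d'}(Y),\A))$, should under the same dictionary correspond to $\lim_{d,d'\to\infty}K_*(C^*_{L,\Ga(Y,A^*_L(P_d(X))\ox\K)}(P_{d'}(Y),\ldots))$ — that is, the localized direction in the $P_d(X)$-variable gets absorbed into the coefficient algebra, turning $A^*(P_d(X))$ into $A^*_L(P_d(X))$. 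With these identifications in place, the map $i_Y$ — which by \eqref{eq: definition of iY} is the inclusion of the fully-localized algebra into the partially-localized one, followed by $Ad_U$ — translates into the map on $Y$-coefficient Roe/localization algebras induced by the evaluation-at-zero homomorphism $A^*_L(P_d(X))\to A^*(P_d(X))$ on the coefficient level. Since $Y$ satisfies the twisted coarse Baum-Connes conjecture with arbitrary coefficients, applying it to the two coefficient algebras $A^*_L(P_d(X))\ox\K$ and $A^*(P_d(X))\ox\K$ and using the five-lemma shows that $i_Y$ is an isomorphism as soon as $ev_*$ of \eqref{eq: ev map for A} induces an isomorphism after tensoring with $\K$ and on each Rips complex $P_{d'}(Y)$ — but that reduces, again by a cutting-and-pasting/Mayer-Vietoris argument in the $Y$-direction exactly as in Theorem \ref{thm: twisted CBC}, to the single statement that $ev_*:\lim_{d\to\infty}K_*(A^*_L(P_d(X)))\to K_*(A^*(X))$ is an isomorphism.

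The main obstacle I anticipate is the careful verification that the coefficient algebra $C^*_{\B_0}(X,\A)\ox\K$ appearing in Step 2, with its uniform-rank and uniform-propagation constraints, really does match $A^*(X)$ on the nose (and $A^*_L$ on the localization side), including checking that the inductive limits over the propagation parameters $R,S$ and over the Rips parameters $d$ commute with the $K$-theory functor and with the Morita equivalences — this is where the quasi-stability of $\A$ and the bounded-geometry hypotheses on $X$, $Y$, $Z$ get used. A secondary technical point is that $i_Y$ as literally defined in \eqref{eq: definition of iY} involves an auxiliary unitary $U$ implementing a coarse equivalence $P_d(X)\times P_{d'}(Y)\sim X\times P_{d'}(Y)$; one must check that conjugation by $U$ is absorbed harmlessly into the coarse-invariance isomorphisms $A^*(P_d(X))\cong A^*(X)$, so that the diagram genuinely commutes and not merely up to a chase. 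Once these bookkeeping matters are settled, the reduction itself is formal.
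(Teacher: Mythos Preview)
Your approach differs substantially from the paper's, and the difference matters. You try to identify the source of $i_Y$ --- the full localization algebra $C^*_{L,\B}(P_d(X)\times P_{d'}(Y),\A)$ --- with a $Y$-localization algebra carrying $A^*_L(P_d(X))$ as coefficient, and then invoke the five-lemma together with the twisted Baum--Connes conjecture for $Y$. The paper does \emph{not} attempt any such global identification. Instead, it applies Mayer--Vietoris and homotopy invariance in the $P_{d'}(Y)$-direction \emph{directly} to the map $i_Y$ (both source and target admit these in the $Y$-direction), reducing at once to the case where $P_{d'}(Y)$ is replaced by the discrete set $Y$. Only then are the two sides identified, and at that point the identifications $C^*_{L,\B}(P_d(X)\times Y,\A)\cong A^*_L(P_d(X))$ and $p_Y\text{-}C^*_{L,\B}(X\times Y,\A)\cong C_{ub}(\IR_+,A^*(X))$ are elementary, because discreteness of $Y$ forces block-diagonality for large $t$. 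A final appeal to quasi-stability (via \cite[Lemma 12.4.3]{HIT2020}) replaces $C_{ub}(\IR_+,A^*(X))$ by $A^*(X)$, and the composition is exactly $ev_*$ of \eqref{eq: ev map for A}.

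The gap in your route is the ``Fubini'' identification you flag as the main obstacle: expressing the full localization algebra of $P_d(X)\times P_{d'}(Y)$ as a $Y$-localization with coefficient $A^*_L(P_d(X))$. These are genuinely different algebras --- the former has a single time parameter governing propagation in both directions, the latter has two independent parameters --- and proving they agree on $K$-theory is not formal. In fact the most natural way to establish it would be a Mayer--Vietoris reduction in the $Y$-direction, which is exactly what the paper does up front; so your detour through the coefficient-change interpretation and the five-lemma (and a second use of the Baum--Connes conjecture for $Y$, already spent in Step~2) is at best redundant and at worst circular. The paper's order of operations --- cut-and-paste first, identify second --- is what makes the argument go through cleanly.
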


\begin{proof}
As we discussed before, we have the homotopy invariance and Mayer-Vietoris sequence in the direction of $P_d(Y)$ for $K_*(p_Y\text{-}C^*_{L,\B}(X\times P_{d}(Y),\A))$, by a reduction argument as in \cite[Theorem 3.2]{Yu1997}, it suffices to proof $i_Y$ is an isomorphism for the $1$-skeleton of $Y$. Thus it suffices to prove
$$i_Y: \lim_{d\to\infty}K_*(C^*_{L,\B}(P_d(X)\times Y,\A))\to K_*(p_Y\text{-}C^*_{L,\B}(X\times Y,\A))$$
is an isomorphism. As $Y$ is discrete, the problem can be further reduced. For the left side, it is direct to see that
$$C^*_{L,\B}(P_d(X)\times Y,\A)\cong A^*_L(P_d(X)).$$
On the right side, one has that
$$p_Y\text{-}C^*_{L,\B}(X\times Y,\A)\cong C_{ub}(\IR_+,A^*(X)).$$
Thus it suffices to prove the composition
$$\begin{tikzcd}
A^*_L(P_d(X)) \arrow[r, "\text{inclusion}", hook] & C_{ub}(\IR_+,A^*(P_d(X))) \arrow[r, "\cong"] & C_{ub}(\IR_+,A^*(X))
\end{tikzcd}$$
induces an isomorphism on $K$-theory after passing $d$ to infinity, where the second isomorphism is given by the isomorphic between $A^*(P_d(X))$ and $A^*(X)$. Since $\A$ is stable, $C^*_{\B_y}(X,\A)$ is quasi-stable, so is $A^*(X)$. Thus the evaluation map
$$ev_*: K_*(C_{ub}(\IR_+,A^*(X)))\to K_*(A^*(X))$$
is an isomorphism by \cite[Lemma 12.4.3]{HIT2020}. Thus it suffices to prove the composition
$$\begin{tikzcd}
A^*_L(P_d(X)) \arrow[r, "\text{inclusion}", hook] & C_{ub}(\IR_+,A^*(P_d(X))) \arrow[r, "\cong"] & C_{ub}(\IR_+,A^*(X)) \arrow[r, "ev_*"] & A^*(X).
\end{tikzcd}$$
induces an isomorphism on $K$-theory after passing $d$ to infinity, which is exactly the map $ev_*$ as in \eqref{eq: ev map for A}.
\end{proof}

\noindent{\bf Step 4. Reduction to the twisted coarse Baum-Connes conjecture with coefficients for $Z$.}

For the last step, we shall further reduce the question to the twisted coarse Baum-Connes conjecture with coefficients for $Z$ and finish the proof of Theorem \ref{thm: main theorem}.
\begin{proof}[Proof of Theorem \ref{thm: main theorem}]
By Lemma \ref{lem: step 3 cutting and pasting}, it suffices to prove $ev_*$ in \eqref{eq: ev map for A} is an isomorphism. 

For an element $(T_y)_{y\in Y}\in A^*(X)$, there exist $R>0$ and $S>0$ such that
$$T_{y}\in\IC_{\B_{y,S}}[X]_R$$
for any $y\in Y$, where $\IC_{\B_{y,S}}[X]_R$ is the subspace of $\IC_{\B_{y,S}}[X]$ of all elements with propagation no more than $R$. Write $Z_{y,S}=\bigsqcup_{y'\in B(y,S)}Z_{y'}$ equipped with the subspace metric endowed from $X$. Then, by the definition of $\B_{y,S}$, we conclude that
$$\supp(T_y)\subseteq Z_{y,S}\times Z_{y,S}.$$
Moreover, $\B_{y, S}$ forms a coarse $Z_{y, S}$-algebra, therefore we can define the twisted Roe algebra associated with this coefficient algebra. Then $T_y$ defines an element in $C^*_{\B_{y, S}}(Z_{y, S},\A)$. Thus, $(T_y)_{y\in Y}$ determines an element in $\prod^u_{y\in Y}C^*_{\B_{y, S}}(Z_{y, S},\A)$, where the symbol $``u''$ means it is the subalgebra of the direct product algebra generated by all elements $(T_y)$ such that they can be approximated by sequence of operators with \emph{uniformly} finite propagations. By definition, we have that
$$\lim_{S\to\infty}\prod^u_{y\in Y}C^*_{\B_{y, S}}(Z_{y, S},\A)=A^*(X).$$
Notice that $Z_{y,S}$ is coarsely equivalent to $Z$ by Definition \ref{def: coarse fibration structure} and $\B_{y,S}=\bigoplus_{y'\in B(y, S)}\Theta(Z_{y'},\A)$, we conclude that
$$C^*_{\B_{y,S}}(Z_{y, S},\A)\cong C^*_{\Ga(Z_y,\A)}(Z_y,\A).$$
As the sequence $(Z_{y, S})$ is uniformly coarsely equivalent to $(Z_y)_{y\in Y}$ by definition, the canonical inclusion map from $(Z_y)_{y\in Y}$ to $(Z_{y, S})$ induces an isomorphism
$$\prod^u_{y\in Y}C^*_{\Ga(Z_y,\A)}(Z_y,\A)\xrightarrow{\cong} \prod^u_{y\in Y}C^*_{\B_{y, S}}(Z_{y, S},\A)$$
for each $S\geq 0$. As $S$ passing to infinity, we conclude that
$$K_*\left(\prod^u_{y\in Y}C^*_{\Ga(Z_y,\A)}(Z_y,\A)\right)\cong K_*(A^*(X)).$$

Define
$$\wt Z=\bigsqcup_{y\in Y} Z_y$$
to be the \emph{separated disjoint union} of $(Z_y)_{y\in Y}$, which means the metric $d$ on $\wt Z$ satisfies that $d$ restricts to the original metric on each $Z_y$ and the distance between $Z_y$ and $Z_{y'}$ is infinity whenever $y\ne y'$, see \cite[Definition 12.5.1]{HIT2020}. Notice that $\D=\prod_{y\in Y}\Ga(Z_y,\A)$ is a coarse $\wt Z$-algebra and
$$C^*_{\D}(\wt Z,\A)\cong \prod^u_{y\in Y}C^*_{\Ga(Z_y,\A)}(Z_y,\A).$$
In this case, we can also define the localization algebra $C^*_{L,\D}(P_d(\wt Z),\A)$ for each $d\geq 0$. By a similar argument as in the last paragraph of the proof of Lemma \ref{lem: step 1 reduction to product}, we can also prove that
$$K_*(C^*_{L,\D}(P_d(\wt Z),\A))\cong K_*(A_L(P_d(X))).$$
Thus, it suffices to prove the assembly map induced by the evaluation map
\begin{equation}\label{eq: last step}ev_*: \lim_{d\to\infty}K_*(C^*_{L,\D}(P_d(\wt Z),\A))\to K_*(C^*_{L,\D}(\wt Z,\A))\end{equation}
is an isomorphism. This can be seen as an infinite uniform version of the twisted coarse Baum-Connes conjecture with coefficients of $Z$. Actually, it can be viewed as the twisted coarse Baum-Connes conjecture with coefficients of $Z$ with a specific coefficient. Fix $(\psi_y: Z\to Z_y)_{y\in Y}$ to be a family of the coarse inverse maps to the coarse equivalences $(\phi_y: Z_y\to Z)_{y\in Y}$. We define $\psi^*(\Theta(Z_y,\A))$ to be a coarse $Z$-algebra as following: the function $\xi\in\psi_y^*(\Theta(Z_y,\A))$ if and only if for any $x\in Z_y$,
$$((\psi_y)_*(\xi))(x)=\bigoplus_{z\in \psi_y^{-1}(x)}\xi(z)\in\K(\ell^2(Z)\ox\A)\cong \A$$
gives an element  in $\Theta(Z_y,\A)$ (as we did in \eqref{eq: pullback coefficient}). Define
$$\E\subseteq\ell^{\infty}(Z,\ell^{\infty}(Y,\A)\ox\K)$$
to be the set of all functions $\xi$ such that\begin{itemize}
\item[(1)] there exists $N\in\IN$ such that $\xi(z)=\sum_{i=1}^N\eta_{z,i}\ox K_{z,i}$, where $\eta_{z,i}\in\ell^{\infty}(Y,\A)$ and $K_{z,i}\in\K$;
\item[(2)] for any fixed $y$, the map $z\mapsto\eta_{z,i}(y)$ determines an element in $\psi_y^*(\Theta(Z_y,\A))$;
\item[(3)] there exists $M>0$ such that $rank(K_{z,i})\leq M$ for any $i$ and $z\in Z$.
\end{itemize}
One can check that $\E$ is a coarse $Z$-algebra. For any $y\in Y$, we may write $\E_y=\phi_y^*(\Theta(Z_y,\A))$ to be the restriction of $\E$ at $y$. Then it is direct to check that
$$C^*_{\E}(Z,\ell^{\infty}(Y,\A)\ox\K)\sim_{\text{Morita}} \lim_{n\to\infty}\prod^u_{y\in Y}M_n(C^*_{\E_y}(Z,\A))\cong \prod^u_{y\in Y}C^*_{\Ga(Z_y,\A)}(Z_y,\A)\cong C^*(\wt Z,\D),$$
where the first Morita equivalence is given by the uniform rank condition and the second isomorphism is because $\A$ is stable and $C^*_{\E_y}(Z,\A)$ is quasi-stable. With a similar argument as in the last paragraph in the proof of Lemma \ref{lem: step 2 reduction to iY}, we can show that the map \eqref{eq: last step} is equivalent to the assembly map of $Z$ with coefficient in $\E$. By assumption, the twisted coarse Baum-Connes conjecture with coefficients holds for $Z$, thus the map \eqref{eq: last step} is an isomorphism. This finishes the proof.
\end{proof}

\begin{Rem}
Here is a short remark on the infinite uniform version of the coarse Baum-Connes conjecture (\emph{without coefficient}). For a metric space $X$, if $K_*(C^*(X))$ is \emph{finitely generated}, then the coarse Baum-Connes conjecture for $X$ is equivalent to the coarse Baum-Connes conjecture for the separated disjoint union $\wt X=\bigsqcup_{n\in\IN} X$. Indeed, one should notice that there exists a canonical inclusion
\begin{equation}\label{eq: inclusion of roe algebra}C^*(\wt X)\to\prod_{n\in\IN}C^*(X).\end{equation}
For the $K$-homology side, by \cite[Theorem 6.4.20]{HIT2020}, we have that
$$K_*(C^*_L(P_d(\wt X)))\cong \prod_{n\in\IN}K_*(C^*(P_d(X)))$$
for each $d\geq 0$. Since $C^*(X)$ is quasi-stable, one has that
$$K_*\left(\prod_{n\in\IN}C^*(X)\right)\cong \prod_{n\in\IN}K_*(C^*(X)).$$
Notice that $Ad_{p_n}: C^*(\wt X)\to C^*(X)$ gives a $C^*$-homomorphism, where $p_n$ is the projection on the $n$-th copy of $X$. Thus there is a canonical homomorphism $K_*(C^*(\wt X))\to \prod_{n\in\IN}K_*(C^*(X))$. Hence, it suffices to show this $K$-theory map is an isomorphism. We shall only show it for $K_1$, it is similar for $K_0$. It is direct to see that this map is an injection. For any $a\in K_1(C^*(\wt X))$, if $Ad_{p_n}(a)=0$ for any $n$, then there exists a path which links $Ad_{p_n}(a)$ to $I$ for each $n$. The sequence of the path may not be equi-continuous, but one can use a ``stacking argument'' as in \cite[Proposition 12.6.3]{HIT2020} to exchange space for speed to make this sequence equi-continuous. This sequence gives a path linking $a$ and $I$, this proves the injectivity part.

For the surjectivity part, it suffices to show that for every representative of $K$-theory class on the right side, it is homotopy equivalent to an element that has uniform finite propagation. Say $\{a^{(0)},\cdots,a^{(n)}\}\subseteq K_1(C^*(X))$ is a finite generating set. Since the Baum-Connes conjecture holds for $X$, there exists $d_0\geq 0$ such that one can find elements $\{b^{(0)},\cdots,b^{(n)}\}\subseteq K_1(C^*_L(P_{d_0}(X)))$ such that $ev_*(b^{(k)})=a^{(k)}$ for each $k\in\{0,\cdots,n\}$. One can always find such a $d_0$ since there are only finitely many elements in our consideration. This means that for any $([c_n])_{n\in\IN}\in \prod_{n\in\IN}K_1(C^*(X))$, one can always find $[d_n]\in \prod_{n\in\IN}K_1(C^*_L(P_{d_0}(X)))$ such that $ev_*(d_n)=c_n$. The presentation element $([u_{n}])$ for $(d_{n})$ can be chosen to be equi-continuous by \cite[Theorem 6.4.20]{HIT2020} and has uniform propagation attenuation speed by \cite[Page 33]{GLWZ2023JFA}. For sufficiently large $t$, one has that $(u_{n}(t))\in M_n(C^*(P_d(X)))$ has uniformly finite propagation.
Since $P_d(X)$ is coarse equivalent to $X$, thus $[Ad_U(u_{n}(t))]$ is equal to $c_n$, where $Ad_U$ induces an isomorphism between Roe algebras. This finishes the proof.\qed
\end{Rem}

\section{Some Remarks}

As a direct corollary of Theorem \ref{thm: main theorem} and Theorem \ref{thm: CBC for CE}, we have the following result.

\begin{Cor}\label{cor: ce-by-ce}
Let $Z\hookrightarrow X\to Y$ be a coarse $Z$-fibration with bounded geometry. If $Z$ and $Y$ admit a coarse embedding into Hilbert space, then the twisted coarse Baum-Connes conjecture with coefficients holds for $X$. \qed
\end{Cor}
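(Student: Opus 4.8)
The plan is to obtain this as an immediate consequence of the two main theorems proved above, so the proof will be very short. First I would apply Theorem~\ref{thm: CBC for CE} to the fiber: since $Z$ has bounded geometry and admits a coarse embedding into Hilbert space, the twisted coarse Baum-Connes conjecture with coefficients in \emph{any} coarse $Z$-algebra holds for $Z$. Applying the same theorem to the base space $Y$ gives that the twisted coarse Baum-Connes conjecture with coefficients in any coarse $Y$-algebra holds for $Y$. Thus both $Z$ and $Y$ satisfy the hypothesis ``the twisted coarse Baum-Connes conjecture with any coefficients'' that appears in the statement of Theorem~\ref{thm: main theorem}.

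Next I would feed this into Theorem~\ref{thm: main theorem}, applied to the coarse fibration $Z\hookrightarrow X\xrightarrow{p}Y$, which has bounded geometry by hypothesis (recall that in this paper ``coarse fibration with bounded geometry'' means that $X$, $Y$ and $Z$ all have bounded geometry, as in Definition~\ref{def: coarse fibration structure}). The conclusion of Theorem~\ref{thm: main theorem} is precisely that the twisted coarse Baum-Connes conjecture with coefficients holds for $X$, which is what we want. In particular, taking $\Ga(X,\A)=\ell^\infty(X,\K)$ recovers the usual coarse Baum-Connes conjecture for $X$.

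There is essentially no obstacle in this argument: all the content has been placed in Theorems~\ref{thm: CBC for CE} and~\ref{thm: main theorem}, and the corollary is just their composition. The only point that deserves a line of care is matching the phrasings of the hypotheses — namely that ``twisted coarse Baum-Connes conjecture with any coefficients'' in Theorem~\ref{thm: main theorem} means the assembly map $\mu_{\Ga(\cdot,\A)}$ is an isomorphism for every coarse algebra, which is exactly the output of Theorem~\ref{thm: CBC for CE} for coarsely embeddable spaces. Once this is observed, the corollary follows by a single invocation of Theorem~\ref{thm: main theorem}, and (combined with Example~\ref{exa: coarse fibration structure}) it yields the coarse Baum-Connes conjecture for finitely generated extensions of coarsely embeddable groups.
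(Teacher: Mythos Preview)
Your proposal is correct and matches the paper's own treatment exactly: the paper states this corollary with only a \qed, remarking that it is ``a direct corollary of Theorem~\ref{thm: main theorem} and Theorem~\ref{thm: CBC for CE},'' which is precisely the two-step composition you describe.
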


A metric space $X$ in Corollary \ref{cor: ce-by-ce} is said to admit a \emph{CE-by-CE} coarse fibration structure. For an extension $1\to N\to G\to Q\to 1$ of infinite groups, the quotient map $p: G\to Q$ gives rise to a coarse $N$-fibration to $G$, which gives a \emph{CE-by-CE} coarse fibration structure to $G$. By Corollary \ref{cor: ce-by-ce}, we obtain that the coarse Baum-Connes conjecture holds for the group extensions constructed by G. Arzhantseva and R. Tessera in \cite{AT2019}.

\begin{Thm}\label{cor: finite extension of CE}
Let $G$ be a countable discrete group which is a finite extension of groups that coarsely embeds into Hilbert space. Then the coarse Baum-Connes conjecture (with coefficients) holds for $G$. \qed
\end{Thm}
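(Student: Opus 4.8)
The plan is to interpret ``a finite extension of groups that coarsely embed into Hilbert space'' as meaning that $G$ admits a finite subnormal series $1 = G_0 \triangleleft G_1 \triangleleft \cdots \triangleleft G_n = G$ whose successive quotients $G_{i+1}/G_i$ all coarsely embed into Hilbert space, and then to prove by induction on $n$ the (formally stronger) statement that every such $G$ satisfies the twisted coarse Baum-Connes conjecture with coefficients in \emph{every} coarse $G$-algebra. At the outset I would equip every group that appears with a proper left-invariant metric: such a metric always exists, automatically makes the group a bounded geometry metric space (properness makes balls finite, left-invariance makes their cardinality uniform), and determines the coarse type uniquely, so the validity of the conjecture does not depend on the choice. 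The base case $n \le 1$ is exactly Theorem~\ref{thm: CBC for CE}.

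For the inductive step I would set $N = G_{n-1} \triangleleft G$ and $Q = G/N$, giving $N$ the restricted metric and $Q$ the quotient metric, which are again proper and left-invariant and hence of bounded geometry. The series $1 \triangleleft \cdots \triangleleft G_{n-1} = N$ has length $n-1$, so the inductive hypothesis gives that $N$ satisfies the twisted conjecture with all coefficients, while $Q = G_n/G_{n-1}$ is coarsely embeddable and therefore satisfies it by Theorem~\ref{thm: CBC for CE}. By Example~\ref{exa: coarse fibration structure} the extension $1 \to N \to G \to Q \to 1$ endows $G$ with a coarse $N$-fibration structure $N \hookrightarrow G \xrightarrow{p} Q$ with bounded geometry, so Theorem~\ref{thm: main theorem} applies and yields that $G$ satisfies the twisted coarse Baum-Connes conjecture with all coefficients. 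This closes the induction, and specializing the coarse $G$-algebra to $\ell^{\infty}(G,\A)$ for a stable $\A$ recovers, via the remarks following Conjecture~\ref{conj: CBC}, the coarse Baum-Connes conjecture for $G$ with coefficients in $\A$ (and, for the choice $\ell^{\infty}(G,\K)$, the coefficient-free version), which is the desired statement.

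The point to watch — rather than a genuine obstacle, since Theorems~\ref{thm: main theorem} and \ref{thm: CBC for CE} carry all of the analytic content — is that the induction must be run with the twisted conjecture for \emph{all} coefficient coarse algebras, not merely for a fixed $C^*$-coefficient: the proof of Theorem~\ref{thm: main theorem} routes the hypotheses on fiber and base through auxiliary coarse algebras (the algebras $\B$, $\D$, $\E$ built in Section~4) that are not of the form $\ell^{\infty}(-,\A)$, so it is precisely the ``with all coefficients'' version of the conjecture that is stable under group extensions. The remaining verifications — that the restricted metric on a subgroup and the quotient metric on a quotient group are proper and left-invariant, hence of bounded geometry and of the coarse type needed for Example~\ref{exa: coarse fibration structure} to apply at every rung of the tower — are routine.
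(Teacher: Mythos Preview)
Your proposal is correct and is exactly the argument the paper has in mind: the paper gives no explicit proof (the statement carries only a \qed), relying on the reader to iterate Theorem~\ref{thm: main theorem} along the subnormal series via Example~\ref{exa: coarse fibration structure}, with Theorem~\ref{thm: CBC for CE} supplying the base case and the hypothesis on each successive quotient. Your remark that the induction must be run for the twisted conjecture with \emph{all} coarse coefficient algebras is precisely the point that makes the iteration go through, since Theorem~\ref{thm: main theorem} both requires and delivers the ``with any coefficients'' version.
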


We also verify the twisted coarse Baum-Connes conjecture with coefficients for the product space.

\begin{Cor}
The twisted coarse Baum-Connes conjecture with coefficients holds for $X$ and $Y$ if and only if the twisted coarse Baum-Connes conjecture with coefficients holds for $X\times Y$.
\end{Cor}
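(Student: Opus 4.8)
The plan is to derive both implications from Theorem \ref{thm: main theorem} together with the coarse imprimitivity techniques of Section 2, applied to the two canonical coarse fibration structures
$$X\hookrightarrow X\times Y\xrightarrow{p_Y}Y\qquad\text{and}\qquad Y\hookrightarrow X\times Y\xrightarrow{p_X}X$$
on the $\ell^1$-product $X\times Y$, which again has bounded geometry.

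The forward implication is immediate: if $X$ and $Y$ both satisfy the twisted coarse Baum-Connes conjecture with any coefficients, then the coarse $X$-fibration $X\hookrightarrow X\times Y\xrightarrow{p_Y}Y$ has fiber $X$ and base $Y$ both satisfying the conjecture with coefficients, so Theorem \ref{thm: main theorem} gives the conjecture with coefficients for the total space $X\times Y$.

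For the converse, assume the conjecture with coefficients holds for $X\times Y$; I will deduce it for $X$, the case of $Y$ being symmetric via $p_X$. Fix an arbitrary coarse $X$-algebra $\Ga(X,\A)$ and form the coarse $(X\times Y)$-algebra $\D$ generated by the functions $(x,y)\mapsto g(x)\ox h(y)$ with $g\in\Ga(X,\A)$ and $h\in C_0(Y,\K)$ — that is, the completion of the smallest algebraic coarse $(X\times Y)$-algebra containing them; its fiber is the stable algebra $\A\ox\K\cong\A$. Since a partial translation of displacement at most $S$ moves the $Y$-coordinate by at most $S$, every element of $\D$ decays in the $Y$-direction, so the collapsing argument of Example \ref{exa: coefficient in c0(X)} (the $C_0(Y,\K)$-twisting reduces the $Y$-coordinate to $\K(\ell^2(\wh Y))$ once finite propagation is imposed) should yield a Morita equivalence
$$C^*_{\D}(X\times Y,\A\ox\K)\ \sim_{\text{Morita}}\ C^*_{\Ga(X,\A)}(X,\A),$$
and, on the localization side, combining this with the comparison between $P_d(X\times Y)$ and $P_d(X)\times P_{d'}(Y)$ from Step 2 of the proof of Theorem \ref{thm: main theorem}, an isomorphism
$$\lim_{d\to\infty}K_*\big(C^*_{L,\D}(P_d(X\times Y),\A\ox\K)\big)\ \cong\ \lim_{d\to\infty}K_*\big(C^*_{L,\Ga(X,\A)}(P_d(X),\A)\big),$$
both compatible with the evaluation-at-zero maps (stability of $\A$ absorbs the leftover copies of $\K$). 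Granting this, $\mu_{\Ga(X,\A)}$ is identified with $\mu_{\D}$, which is an isomorphism by the hypothesis applied to $X\times Y$; since $\Ga(X,\A)$ was arbitrary, the twisted coarse Baum-Connes conjecture with coefficients holds for $X$.

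Two points need care. First, that $\D$ really is a coarse $(X\times Y)$-algebra: any partial translation on the $\ell^1$-product decomposes, by bounded geometry, into finitely many restrictions of product partial translations $v_1\times v_2$ with $v_1$ on $X$ and $v_2$ on $Y$, on which $(v_1\times v_2)^*(g\ox h)=v_1^*g\ox v_2^*h$, so the stability of $\Theta(X,\A)$ and of $C_0(Y,\K)$ under partial translations passes to $\D$. Second — and this is where I expect the real work — one must upgrade the heuristic ``$C_0(Y,\K)$ kills the $Y$-direction'' to genuine Morita equivalences holding uniformly in the Rips parameters $d,d'$, so that the identifications of both $\lim_d K_*(C^*_{L,\D}(\cdots))$ and $K_*(C^*_{\D}(\cdots))$ are compatible with evaluation at zero; this is the localization-level analogue of the bookkeeping carried out in Steps 2--3 of the proof of Theorem \ref{thm: main theorem}.
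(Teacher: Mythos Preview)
Your approach is correct and essentially the same as the paper's. For the forward direction both you and the paper invoke Theorem~\ref{thm: main theorem} applied to the product fibration. For the converse, the paper simply cites Theorem~\ref{thm: coarse imprimitivity} together with Lemma~\ref{lem: step 1 reduction to product}: your algebra $\D$ is, up to the harmless extra $\K$-factor absorbed by stability, precisely the induced coarse $(X\times Y)$-algebra $\Ga_{ind}^X(X\times Y,\A)$ along $X\hookrightarrow X\times Y\to Y$, so the Morita equivalence you sketch via Example~\ref{exa: coefficient in c0(X)} is already the content of the coarse imprimitivity theorem, and the ``real work'' you flag on the localization side is exactly the argument of Lemma~\ref{lem: step 1 reduction to product}. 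You could therefore shorten considerably by citing those two results directly rather than rebuilding them.
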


\begin{proof}
For $(\Rightarrow)$, it is a direct conclusion of Corollary \ref{cor: ce-by-ce} and Example \ref{exa: coarse fibration structure}. By Theorem \ref{thm: coarse imprimitivity}, together with a similar argument as in Lemma \ref{lem: step 1 reduction to product}, the twisted coarse Baum-Connes conjecture with coefficients for $X$ is equivalent to the twisted coarse Baum-Connes conjecture for $X\times Y$ with coefficients in certain induced algebra. This proves $(\Leftarrow)$.
\end{proof}

This method can also be used to study \emph{coarse Novikov conjecture}, which claims that the coarse assembly map is injective. 

\begin{con}[Twisted coarse Novikov conjecture with coefficient]
Let $X$ be a metric space with bounded geometry. Then for any coarse $X$-algebra $\Ga(X,\A)$, the twisted assembly map $\mu_{X,\A}$ is injective.
\end{con}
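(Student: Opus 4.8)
The plan is to regard this statement as the coarse, twisted-coefficient analogue of the strong Novikov conjecture and to attack it by a coarse Dirac--dual-Dirac method built on the machinery of this paper, rather than by any direct computation. Since the full isomorphism statement (Conjecture~\ref{conj: CBC}) cannot hold at this level of generality --- it contains the Baum--Connes conjecture with coefficients, for which there are counterexamples coming from expanders and Gromov monster groups --- any proof of injectivity must be strictly one-sided: one should construct a \emph{dual-Dirac} morphism that splits $\mu_{\Ga(X,\A)}$ on $K$-theory after passing to the inductive limit over Rips complexes, without ever claiming surjectivity, exactly as the $\gamma$-element splits the assembly map for a group.

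\emph{Step 1: injectivity under a weak geometric hypothesis.} First I would attach to $X$ an auxiliary coarsely proper $X$-algebra $\Ga(X,\A_0\ox\K)$ in the sense of Definition~\ref{def: coarsely proper algebra}, built from the weakest geometric input that still carries a dual-Dirac class; the natural candidate is a \emph{fibred coarse embedding} of $X$ into Hilbert space, for which one constructs a variant $\A_0$ of Yu's algebra $\A(\H)$ of Example~\ref{exa: CE}. Together with this algebra one wants a Bott/Dirac pair of $*$-homomorphisms
$$\beta:\ell^\infty(X,\S\ox\K)\longrightarrow \Ga(X,\A_0\ox\K),\qquad \alpha:\Ga(X,\A_0\ox\K)\longrightarrow \ell^\infty(X,\S\ox\K),$$
assembled, for any coefficient algebra $\Ga(X,\B)$, into the same commuting ladder as in the proof of Theorem~\ref{thm: CBC for CE}, with the assembly map $\mu_{\Ga(X,\A_0\ox\B)}$ in the middle row. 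By Theorem~\ref{thm: twisted CBC} (the coarse Green--Julg theorem) the middle row is an isomorphism, since $\Ga(X,\A_0\ox\B)$ is coarsely proper; the essential point is then to prove that the vertical compositions $\alpha\circ\beta$ induce, after $\lim_{d\to\infty}$, a \emph{left inverse} of the identity on the relevant $K$-groups --- infinite-dimensional Bott periodicity together with the tapering/attenuation estimates of \cite{Yu2000}, now used in one direction only --- from which a diagram chase yields injectivity of $\mu_{\Ga(X,\A)}$ for every fibred-coarsely-embeddable $X$.

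\emph{Step 2: propagation along coarse fibrations.} Next I would feed Step~1 into the permanence machinery of Section~4. The four-step reduction that proves Theorem~\ref{thm: main theorem} is functorial, and the part of it that transports injectivity goes through unchanged: at each stage --- the passage from $\Ga(X,\A)$ on $X$ to $\B$ on $X\times Y$ (Lemma~\ref{lem: step 1 reduction to product}), the rewriting via partial localisation algebras, the cutting-and-pasting of $i_Y$ down to the $1$-skeleton, and the final identification with an infinite-uniform twisted assembly map for $Z$ with coefficients in $\E$ --- the comparison is a commuting $K$-theory diagram in which one map is an isomorphism or a genuine Morita/homotopy equivalence, so injectivity of the twisted assembly map with all coefficients for $Z$ and for $Y$ forces it for $X$; the only bookkeeping requiring care is the Mayer--Vietoris/five-lemma step, where one must track surjectivity of the comparison maps on lower skeleta alongside injectivity. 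Combined with Step~1 this would establish the twisted coarse Novikov conjecture with coefficients for every space obtained by iterated coarse fibrations from fibred-coarsely-embeddable pieces --- in particular for relative-expander-type spaces and for the group extensions of \cite{AT2019}, recovering and extending \cite{Deng2022}.

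\emph{The main obstacle.} The genuinely hard part is the completely general case. There is at present no construction that equips an arbitrary bounded-geometry metric space with a coarsely proper algebra carrying a $K$-theoretically (even one-sidedly) invertible Dirac--dual-Dirac pair, and the known failures of Baum--Connes with coefficients warn that any successful argument must be strictly injective in character and cannot pass through a full isomorphism at any intermediate stage. Identifying the right ``weaker-than-embedding'' coarse-geometric invariant --- one possessed by every bounded-geometry space yet still supporting a dual-Dirac class --- is exactly the open content of the conjecture; absent such an invariant, the realistic outcome of this program is the conditional statement of Step~2 together with the fibred-coarsely-embeddable case of Step~1, and I would accordingly present the universal conjecture as the principal open problem left by this paper.
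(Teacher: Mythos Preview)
The statement under discussion is a \emph{conjecture}, not a theorem: the paper does not claim a proof of it, and indeed your own final paragraph correctly identifies the completely general case as open. So there is no ``paper's proof'' to compare against; what the paper does contain is the conditional permanence result Theorem~\ref{thm: coarse Novikov}, and it is worth comparing your Step~2 to that.

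Your Step~2 asserts that ``injectivity of the twisted assembly map with all coefficients for $Z$ and for $Y$ forces it for $X$'', treating the Mayer--Vietoris/five-lemma step as mere bookkeeping. The paper says otherwise: immediately after stating the conjecture it explains that in Step~3 of the proof of Theorem~\ref{thm: main theorem} the cutting-and-pasting argument genuinely requires an \emph{isomorphism}, and ``can not be replaced by injective''. Concretely, to show that $i_Y$ in \eqref{eq: main diagram} is an isomorphism (or even injective) one reduces via Mayer--Vietoris on $P_{d'}(Y)$ to the $1$-skeleton and then invokes \eqref{eq: ev map for A}; but a five-lemma argument for injectivity needs surjectivity of adjacent maps, which in turn forces the full twisted coarse Baum--Connes conjecture with coefficients on the fiber $Z$, not merely the Novikov half. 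This is exactly why Theorem~\ref{thm: coarse Novikov} is stated asymmetrically: Novikov for the base $Y$ together with \emph{full} twisted CBC for the fiber $Z$ yields Novikov for $X$. Your version, with only injectivity assumed on $Z$, is not supported by the machinery of Section~4 and would require a genuinely new idea.

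Your Step~1 (a one-sided Dirac--dual-Dirac for fibred coarse embeddings) is a reasonable program and is in the spirit of \cite{GLWZ2023JFA}, to which the paper alludes in its final paragraph; but note that this too is not a proof of the conjecture in general, only of the conditional cases you list at the end.
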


Recall the proof of our main theorem, the twisted coarse Baum-Connes conjecture with coefficients for the base space is used in Step 2, and the twisted coarse Baum-Connes conjecture with coefficients for fiber space is used in Step 3 and Step 4. Notice that in Step 3, we shall need an isomorphism to do the cutting-and-pasting argument, which can not be replaced by injective. But if we only assume the coarse Novikov conjecture with coefficients for the base space, we can still conclude the coarse Novikov conjecture for the whole space $X$, which leads to the following theorem.

\begin{Thm}\label{thm: coarse Novikov}
Let $Z\hookrightarrow X\to Y$ be a coarse $Z$-fibration with bounded geometry. If the twisted coarse Novikov conjecture with coefficients holds for $Y$ and the twisted coarse Baum-Connes conjecture with coefficients holds for $Z$, then the twisted coarse Novikov conjecture with coefficients holds for $X$.
\end{Thm}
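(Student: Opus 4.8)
The plan is to rerun the four-step proof of Theorem \ref{thm: main theorem}, keeping careful track of where surjectivity --- as opposed to mere injectivity --- of an assembly map is genuinely used. Fix an arbitrary coarse $X$-algebra $\Ga(X,\A)$; we must show that the twisted assembly map $\mu_{\Ga(X,\A)}$ for $X$ is injective. The first observation is that Step~1, Lemma \ref{lem: step 1 reduction to product}, is a genuine equivalence realized by $K$-theory isomorphisms of the twisted Roe and twisted localization algebras which intertwine the evaluation-at-zero maps; hence injectivity of $\mu_{\Ga(X,\A)}$ is equivalent to injectivity of the top horizontal map $ev_*$ in the diagram \eqref{eq: main diagram}, that is, of the twisted assembly map for $X\times Y$ with coefficients in $\B$.

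Next I would exploit the factorization read off from \eqref{eq: main diagram}: since the right vertical arrow there is the identity, commutativity gives that the top horizontal $ev_*$ equals the bottom horizontal $ev_*$ composed with the left vertical map $i_Y$. It therefore suffices to prove that both the bottom horizontal $ev_*$ and $i_Y$ are injective. For the bottom map I would reuse the identification from the proof of Lemma \ref{lem: step 2 reduction to iY}: up to the Morita equivalence and stability isomorphisms constructed there, the bottom $ev_*$ is the twisted coarse assembly map for $Y$ with coefficients in the coarse $Y$-algebra $\Ga(Y,C^*_{\B_0}(X,\A)\ox\K)$, whose fiber $C^*_{\B_0}(X,\A)\ox\K$ is stable. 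The hypothesis that the twisted coarse Novikov conjecture with coefficients holds for $Y$ applies to this coarse $Y$-algebra and yields the injectivity of the bottom $ev_*$. Only injectivity of the $Y$-assembly map is consumed here, which is exactly why the weaker Novikov hypothesis on the base suffices.

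For the map $i_Y$ I would invoke Lemma \ref{lem: step 3 cutting and pasting} and Step~4 without any change: they reduce the statement that $i_Y$ is an \emph{isomorphism} to the statement that the map $ev_*$ in \eqref{eq: ev map for A} is an isomorphism, which Step~4 identifies with the twisted coarse Baum-Connes conjecture with coefficients for $Z$ (with the coefficient algebra $\E$ constructed there). Since we are assuming the \emph{full} twisted coarse Baum-Connes conjecture with coefficients for $Z$, the map $i_Y$ is an isomorphism, in particular injective. Combining this with the previous paragraph, the top horizontal $ev_*$ in \eqref{eq: main diagram} is injective, hence $\mu_{\Ga(X,\A)}$ is injective, as desired.

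The main obstacle --- and the reason the hypothesis on the fiber cannot be relaxed from Baum-Connes to Novikov --- is precisely Step~3: the cutting-and-pasting reduction along the simplices of $Y$ runs a Mayer-Vietoris and five-lemma induction, which propagates isomorphisms but not one-sided injections down to the $1$-skeleton, so one genuinely needs $ev_*$ of \eqref{eq: ev map for A}, and hence the twisted coarse Baum-Connes conjecture for $Z$, to be an isomorphism. The only routine point requiring care is to verify, exactly as in the proof of Theorem \ref{thm: main theorem}, that the coefficient algebras appearing for $Y$ in Step~2 and for $Z$ in Step~4 are coarse algebras with stable fibers, so that the Novikov hypothesis on $Y$ and the Baum-Connes hypothesis on $Z$ are legitimately applicable.
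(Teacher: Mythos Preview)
Your proposal is correct and follows essentially the same approach as the paper: the paper's discussion preceding Theorem \ref{thm: coarse Novikov} observes that the base-space assumption enters only in Step~2 (so Novikov for $Y$ suffices there), while Steps~3 and~4 require the full Baum-Connes conjecture for $Z$ because the cutting-and-pasting argument in Step~3 propagates isomorphisms via Mayer-Vietoris and the five lemma. Your explicit use of the factorization $ev_*^{\mathrm{top}}=ev_*^{\mathrm{bottom}}\circ i_Y$ from diagram \eqref{eq: main diagram} is exactly the mechanism the paper has in mind, and your identification of the Step~3 obstruction matches the paper's remark that the isomorphism there ``can not be replaced by injective.''
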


In \cite{GLWZ2023JFA}, Z.~Luo, Q.~Wang, Y.~Zhang, and the second author have introduced many examples of coarsely proper algebras with a Bott generator. Using a similar argument with Theorem \ref{thm: CBC for CE}, one can actually show that the coarse Novikov conjecture with coefficient holds for a bounded geometry space $X$ that coarsely embeds into a Hadamard manifold (or a Banach space with Property (H), or an admissible Hilbert-Hadamard space). Therefore, Theorem \ref{thm: coarse Novikov} provides a new approach to the main theorem of \cite[Theorem 1.2]{GLWZ2023JFA}.


\bibliographystyle{alpha}
\bibliography{ref}

\end{document}